\let\oldtocsection=\tocsection
\let\oldtocsubsection=\tocsubsection 
\let\oldtocsubsubsection=\tocsubsubsection
\renewcommand{\tocsection}[2]{\vspace{0.5em}\hspace{0em}\oldtocsection{#1}{#2}}
\renewcommand{\tocsubsection}[2]{\vspace{0.5em}\hspace{1em}\oldtocsubsection{#1}{#2}}
\renewcommand{\tocsubsubsection}[2]{\vspace{0.5em}\hspace{2em}\oldtocsubsubsection{#1}{#2}}
\DeclareMathOperator{\supp}{supp}
\DeclareMathOperator{\divv}{div}
\DeclareMathOperator{\meas}{meas}
\newtheorem{theoreme}{Theorem}[section]
\newtheorem{lemma}[theoreme]{Lemma}
\newtheorem{rem}[theoreme]{Remark}
\newtheorem{definition}[theoreme]{Definition}
\theoremstyle{definition}
\numberwithin{equation}{section}
 \renewenvironment{proof}{{\bfseries \noindent Proof.}}{\demo}
\newcommand\xqed[1]{%
  \leavevmode\unskip\penalty9999 \hbox{}\nobreak\hfill
  \quad\hbox{#1}}
\newcommand\demo{\xqed{$\square$}}
\def\lam{\lambda}
\def\R{\mathbb R}
\def\N{\mathbb N}
\def\C{\mathbb C}
\def\HH{\mathcal H}
\def\AA{\mathcal A}
\def\la {{\lambda}}
\newcommand {\nc}   {\newcommand}
\nc {\be}   {\begin{equation}} \nc {\ee}   {\end{equation}} \nc
\nc {\eeq}  {\end{eqnarray}} \nc {\beqs}
\nc {\eeqs} {\end{eqnarray*}}
\def\edc{\end{document}}
\providecommand{\abs}[1]{\lvert#1\rvert}%absolute value
\newcounter{dummy} 
\numberwithin{dummy}{section}
\newtheorem{Theorem}[dummy]{Theorem}
\newtheorem{Lemma}[dummy]{Lemma}
\newtheorem{Proposition}[dummy]{Proposition}
\numberwithin{equation}{section}
\begin{document}
\title[\fontsize{7}{9}\selectfont  ]{A N-dimensional  elastic$\backslash$viscoelastic transmission problem with Kelvin-Voigt damping and non smooth coefficient at the interface }
\author{Mohammad Akil$^{1}$}
\author{Ibtissam Issa$^{2,3}$}
\author{Ali Wehbe$^{2}$}
%\author{Michel Mehrenberger$^{3}$}
\address{$^1$ Universit\'e Savoie Mont Blanc - Chamb\'ery - France, Laboratoire LAMA}
\address{$^2$Lebanese University, Faculty of sciences 1, Khawarizmi Laboratory of  Mathematics and Applications-KALMA, Hadath-Beirut, Lebanon.}
\address{$^3$ Universit\'e Aix-Marseilles - Marseille  - France, Laboratoire I2M}
\email{mohammad.akil@univ-smb.fr}
\email{ali.wehbe@ul.edu.lb}
\email{ibtissam.issa@etu.univ-amu.fr}
%\email{michel.mehrenberger@univ-amu.fr}
\keywords{Wave equation;  Kelvin-Voigt damping; Semigroup; Non uniform stability,  Polynomial stability.}

\begin{abstract}
We investigate the stabilization of a multidimensional system of coupled wave equations with only one Kelvin-Voigt damping. 
Using a unique continuation result based on a Carleman estimate and a general criteria of Arendt–Batty, we prove the strong stability of the system in the absence of the compactness of the resolvent without any  geometric condition. Then, using a spectral analysis, we prove the non uniform stability of the system. Further, using frequency domain approach combined with a multiplier technique, we establish some polynomial stability results by considering different geometric conditions on the coupling and damping domains. In addition, we establish two polynomial energy decay rates of the system on a square domain where the damping and the coupling are localized in a vertical strip. 
\end{abstract}

\maketitle
\pagenumbering{roman}
\maketitle
\tableofcontents
%\clearpage
\pagenumbering{arabic}
\setcounter{page}{1}
%\newpage

\section{Introduction} 
Let $\Omega\subset\R^N$ be a bounded open set with Lipschitz boundary $\Gamma$. We consider the following two wave equations coupled through velocities with a viscoelastic damping:
\begin{equation}\label{k-v}
\left\{\begin{array}{lll}
u_{tt}-\divv(a\nabla u+b(x)\nabla u_t)+c(x)y_t=0&\text{in}&\Omega\times\R^+,\\[0.1in]
y_{tt}-\Delta y -c(x) u_t=0&\text{in}&\Omega\times\R^+,
\end{array}\right.
\end{equation}
with the following initial conditions:
\begin{equation}\label{initial conditions}
u(x,0)=u_0\left(x\right),\ y(x,0)=y_0\left(x\right),\ u_t(x,0)=u_1\left(x\right),\ y_t(x,0)=y_1\left(x\right)\quad x\in\Omega,
\end{equation}
and the following boundary conditions:
\begin{equation}\label{boundary conditions}
u\left(x,t\right)=y\left(x,t\right)=0 \quad \text{ on } \Gamma\times \R^+.
\end{equation}
The functions $b,c\in L^{\infty}(\Omega)$ such that  $b:\Omega\rightarrow \R^{\ast}_{+}$ is the viscoelastic damping coefficient and $c:\Omega\rightarrow \R^{\ast}$ is the coupling function. 
The constant $a$ is a strictly positive constant.\\

The stabilization of the wave equation with localized damping has received a special attention since the seventies (see \cite{Tebou,bardos,zuazua,Dafermos}).
The stabilization of a material composed of two parts: one that is elastic and the other one that is a Kelvin-Voigt type viscoelastic material was studied extensively. This type of material is encountered in real life when one uses patches to suppress vibrations, the modeling aspect of which may be found in \cite{Banks}.
This type of damping was examined in the one-dimensional setting in \cite{liu-liu-1998,Liu2002,Liu2005}. Later on, the wave equation with Kelvin-Voigt damping in the multidimensional setting was studied. Let us consider the wave equation with Kelvin-Voigt damping given in the following system
\begin{equation}
\left\{
\begin{array}{lll}
u_{tt}-\divv(a\nabla u+b(x)\nabla u_t),\quad &\text{in}&\Omega\times\R_+^{\ast}, \\ [0.1in]
u(x,t)=0,\quad &\text{on}&\Gamma\times\R_+^{\ast},  \\ [0.1in]
u(\cdot,0)=u_0, u_t(\cdot,0)=u_1, \quad &\text{in}&\Omega
\end{array}\right.
\end{equation}
In \cite{Huang-falun}, the author proved that when the Kelvin-Voigt damping div$(b(x)\nabla u_{t})$ is globally distributed, i.e. $b(x)\geq b_0>0$ for almost all $x\in\Omega$, the wave equation generates an analytic semi-group. In \cite{Liu-Rao-2006}, the authors considered the wave equation with local visco-elastic damping distributed around the boundary of $\Omega$. They proved that the energy of the system decays exponentially to zero as t goes to infinity for all usual initial data under the assumption that the damping coefficient satisfies: $b\in C^{1,1}(\Omega)$, $\Delta b\in L^{\infty}(\Omega)$ and $|\nabla b(x)|^2\leq M_0 b(x)$ for almost every $x$ in $\Omega$ where $M_0$ is a positive constant. On the other hand, in \cite{Tebou}, the author studied the stabilization of the wave equation with Kelvin-Voigt damping. He established a polynomial energy decay rate of type $t^{-1}$ provided that the damping region is
localized in a neighborhood of a part of the boundary and verifies certain geometric condition. Also, in \cite{Nicaise-Pignotti2016}, under the same assumptions on $b$, the authors established the exponential stability of the wave equation with local Kelvin-Voigt damping localized around a part of the boundary and an extra boundary with time delay where they added an appropriate geometric condition. Later on, in \cite{Ammari_2019}, the wave equation with Kelvin-Voigt damping localized in a subdomain $\omega$ far away from the boundary without any geometric conditions was considered. The authors established a logarithmic energy decay rate for smooth initial data. In \cite{Cavalcanti}, the authors proved an exponential decay of the energy of a wave equation with two types of locally distributed mechanisms; a frictional damping and a Kelvin–Voigt damping where the location of each damping is such that none of them is able to exponentially stabilize the system. Under an appropriate geometric condition,  piecewise multiplier geometric condition in short PMGC introduced by by K. Liu in \cite{K.liu}, on a subset $\omega$ of $\Omega$ where the dissipation is effective, they proved that the energy of the system decays polynomially of type $t^{-1}$ in the absence of regularity of the Kelvin–Voigt damping coefficient $b$.  In \cite{akil-mcrf}, the authors considered a multidimensional wave equation with boundary fractional damping acting on a part of the boundary of the domain and they proved stability results under  geometric control condition (GCC in short, see Definition \ref{GCC}). 
In \cite{NNW}, the author established a polynomial energy decay rate 
of type $t^{-1}$ for smooth initial data under some geometric conditions. Also, they proved a general polynomial energy decay estimate on a bounded domain where the geometric conditions on the localized viscoelastic damping are violated and they applied it on a square domain where the damping is localized in a vertical strip.
Also,  in \cite{Robbiano-Zhang}, the authors analyzed the long time behavior of the wave equation with local Kelvin-Voigt damping where they showed the logarithmic decay rate for energy of the system without any geometric assumption on the subdomain on which the damping is effective. Furthermore, in \cite{Burq},  the author showed how perturbative approaches  and the black box strategy  allow to obtain decay rates for Kelvin-Voigt damped wave equations from quite standard resolvent estimates (for example Carleman estimates or geometric control estimates). Recently,  in \cite{Burq-Sun1}, the authors studied the energy decay rate of the Kelvin-Voigt damped wave equation with piecewise smooth damping on the multi-dimensional domain. Under suitable geometric assumptions on the support of the damping, they obtained an optimal polynomial decay rate. In 2021,  in \cite{Burq-Sun2}, they studied the decay rates for Kelvin-Voigt damped wave equations under a geometric control condition. When the damping coefficient is sufficiently smooth they showed that exponential decay follows from geometric control conditions.\\

Over the past few years, the coupled systems received a vast attention due to their potential applications. The system of coupled wave equations with only one Kelvin-Voigt damping was considered in \cite{Oquendo2017}. The authors considered the damping and the coupling coefficients to be constants and they established a polynomial energy decay rate of type  $t^{-1/2}$ and an optimality result.   In \cite{LIU2004}, exponential stability for the wave equations with local Kelvin–Voigt damping was considered where the local viscoelastic damping distributed around the boundary of the domain. They showed that the energy of the system goes uniformly and exponentially to zero for all initial data of finite energy. In \cite{Zhang2018}, the author considered the wave equation with Kelvin-Voigt damping in a non empty bounded convex domain $\Omega$ with partition $\Omega=\Omega_1\cap\Omega_2$ where the viscoelastic damping is localized in $\Omega_1$, the coupling is through a common interface. Under the condition that the damping coefficient $b$ is non smooth, she established a polynomial energy decay rate of type $t^{-1}$ for smooth initial data.  Also, in \cite{chiraz}, the authors studied the stability of coupled wave equations under Geometric Control Condition (GCC in short) where they considered one viscous damping. Finally, in \cite{Hayek2020}, the authors considered a system of weakly coupled wave equations with one or two locally internal Kelvin–Voigt damping and non-smooth coefficient at the interface.  They established some polynomial energy decay estimates under some geometric condition. The stability of wave equations coupled through velocity and with non-smooth coupling and damping coefficients is not considered yet. Also, the study of the coupled wave equations under several geometric condition is not covered. In this work, we consider the coupled system represented in \eqref{k-v}-\eqref{boundary conditions} by considering several geometric conditions (H1), (H2),  (H3), (H4),  and (H5) ( see Section \ref{Section-Poly}) where the coupling is made via velocities and with non smooth coupling and damping coefficients.  In addition, this work is a generalization of the work in \cite{akil2020stability} where the system is described by
\begin{equation}\label{eq1}
\left\{
\begin{array}{lll}
u_{tt}-\left(au_x+b(x)u_{tx}\right)_x+c(x)\ y_t&=&0,\quad (x,t)\in (0,L)\times \R^+,\\
y_{tt}-y_{xx}-c(x)\ u_t&=&0,\quad (x,t)\in (0,L)\times \R^+,
\end{array}\right.
\end{equation}
with fully Dirichlet boundary conditions and with the following initial data
\begin{equation}\label{eq3}
u(0,t)=u(L,t)=y(0,t)=y(L,t)=0,\quad \forall\ t\in \R^+,
\end{equation}
\begin{equation}\label{eq4}
u(\cdot,0)=u_0(\cdot),\ u_t(\cdot,0)=u_1(\cdot),\ y(\cdot,0)=y_0(\cdot)\quad \text{and}\quad y_t(\cdot,0)=y_1(\cdot),
\end{equation}
where
\begin{equation}\label{bc}
b(x)=\left\{\begin{array}{ccc}
b_0&\text{if}&x\in (\alpha_1,\alpha_3)\\
0&&\text{otherwise}
\end{array}
\right.
\quad \text{and}\quad c(x)=\left\{\begin{array}{ccc}
c_0&\text{if}&x\in (\alpha_2,\alpha_4)\\
0&&\text{otherwise}
\end{array}
\right.
\end{equation}
and $a>0, b_0>0$,  $c_0\in \R^{\ast}$,  and  $0<\alpha_1<\alpha_2<\alpha_3<\alpha_4<L$.   The authors considered that both the damping and the coupling coefficients are non smooth and showed that the energy of the  smooth solutions of the system decays polynomially of type $t^{-1}$. We generalize this work  to a multidimensional case and we study the stability of the system \eqref{k-v}-\eqref{boundary conditions} under several geometric control conditions.  We establish polynomial stability when there is an intersection between the damping and the coupling regions. Also, when the coupling region is a subset of the damping region and under Geometric Control Condition GCC.  Moreover, in the absence of any geometric condition, we study the stability of the system on the 2-dimensional square domain.

The paper is organized as follows: first, in Section \ref{Well-Posedness and Strong Stability}, we show that the system \eqref{k-v}-\eqref{boundary conditions} is well-posed using semi-group approach. Then, using a unique continuation result based on a Carleman estimate and a general criteria of Arendt–Batty, we prove the strong stability of the system in the absence of the compactness of the resolvent and without any geometric condition. In Section \ref{Non Uniform Stability}, using a spectral analysis, we prove the non uniform stability of the system in the case where $b(x)=b\in \R^{\ast}_ +$ and  $c(x)=c\in \R^{\ast}$.  In Section \ref{Polynomial Stability}, we establish some polynomial energy decay rates under several geometric conditions by using a frequency domain approach combined with a multiplier method. In addition, we establish two polynomial energy decay rates on a square domain where the damping and the coupling are localized in a vertical strip. 

%%%%%%%%%%%%%%%%%%%%%%%%%%%%%%%%%%%%
%%%%%%%%%%%%%%%%%%%%%%%%%%%%%%%%%%
\section{Well-Posedness and Strong Stability}\label{Well-Posedness and Strong Stability}
\subsection{Well posedness}\label{WP}
In this part, using a {\color{black}semigroup} approach, we establish the well-posedness result for the system \eqref{k-v}-\eqref{boundary conditions}.\\ 
Let $(u,u_t,y,y_t) $ be a regular solution of the system \eqref{k-v}-\eqref{boundary conditions}. The energy of the system is given by
\begin{equation}\label{KVEnergie}
E(t)=\frac{1}{2}\int_{\Omega}\left(|u_t|^2+|y_t|^2+a|\nabla u|^2+|\nabla y|^2\right)dx.
\end{equation}
A straightforward computation gives 
$$
E^{\prime}(t)=-\int_{\Omega}b(x)|\nabla u_t|^2dx\leq 0.
$$
Thus, the system \eqref{k-v}-\eqref{boundary conditions} is dissipative in the sense that its energy is a non increasing function with respect
to the time variable $t$. 
We define the energy Hilbert space $\mathcal{H}$ by 
$$
\mathcal{H}=\left(H_0^1(\Omega)\times L^2(\Omega)\right)^2
$$
equipped with the following inner product
$$
\left<U,\widetilde{U}\right>=\int_{\Omega}\left(a\nabla u\cdot\nabla\bar{\tilde{u}}+\nabla y\cdot\nabla\bar{\tilde{y}}+v\bar{\tilde{v}}+z\bar{\tilde{z}}\right)dx,
$$
for all $U=(u,v,y,z)^{\top}\in\mathcal{H}$ and $\tilde{U}=(\tilde{u},\tilde{v},\tilde{y},\tilde{z})^{\top}\in\mathcal{H}$. Finally, we define the unbounded linear operator $\mathcal{A}$ by 
\begin{equation*}
  D(\mathcal{A})=\left\{
  \begin{array}{c}
\displaystyle{U=(u,v,y,z,\omega)\in \mathcal{H}:\ v,z\in H_0^1(\Omega), \, \divv(a(x)\nabla u+b(x)\nabla v)\in L^2(\Omega)},\ y\in H^2(\Omega)\cap H_0^1(\Omega)
\end{array}
\right\}
\end{equation*}
and for all $U=(u,v,y,z,\omega)\in D(\mathcal{A})$, 
$$
\mathcal{A}(u,v,y,z)^{\top}=\begin{pmatrix}
v\\ \vspace{0.2cm} 
\displaystyle{\divv(a\nabla u+b(x)\nabla v)-c(x) z}\\ \vspace{0.2cm}
 z\\ \vspace{0.2cm}
 \Delta y+c(x) v
\end{pmatrix}.
$$
If $U=(u,u_t,y,y_t)^{\top}$ is a regular solution of system \eqref{k-v}-\eqref{boundary conditions}, then we rewrite this system as the following evolution equation
\begin{equation}\label{Cauchy}
U_t=\mathcal{A}U,\quad U(0)=U_0
\end{equation}
where $U_0=\left(u_0,u_1,y_0,y_1\right)^{\top}$.
\begin{Proposition}\label{M-Dissipative}
The unbounded linear operator $\mathcal{A}$ is m-dissipative in the energy space $\mathcal{H}$. 
\end{Proposition}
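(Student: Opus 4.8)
The plan is to invoke the Lumer--Phillips theorem, so it suffices to verify two things: that $\mathcal{A}$ is dissipative, and that $I-\mathcal{A}$ is onto $\mathcal{H}$ (equivalently, that $\lambda I-\mathcal{A}$ is onto for some $\lambda>0$); density of $D(\mathcal{A})$ in $\mathcal{H}$ holds since $(C_c^\infty(\Omega))^4\subset D(\mathcal{A})$.

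For dissipativity, take $U=(u,v,y,z)^{\top}\in D(\mathcal{A})$ and compute $\langle\mathcal{A}U,U\rangle$ directly from the inner product, integrating by parts the terms containing $\divv(a\nabla u+b(x)\nabla v)$ and $\Delta y$; the boundary contributions drop because $v,z\in H_0^1(\Omega)$. One is left with
\[
\langle\mathcal{A}U,U\rangle=\int_{\Omega}\!\big(a\nabla v\cdot\nabla\bar u-a\nabla u\cdot\nabla\bar v+\nabla z\cdot\nabla\bar y-\nabla y\cdot\nabla\bar z\big)dx-\int_{\Omega}b(x)|\nabla v|^{2}dx+\int_{\Omega}c(x)\big(v\bar z-z\bar v\big)dx.
\]
Since $a$ is a real constant and $c$ is real-valued, each bracketed pair consists of mutually conjugate quantities, so taking real parts annihilates them and leaves $\operatorname{Re}\langle\mathcal{A}U,U\rangle=-\int_{\Omega}b(x)|\nabla v|^{2}dx\le 0$ because $b>0$. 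Hence $\mathcal{A}$ is dissipative.

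For maximality, given $F=(f_1,f_2,f_3,f_4)^{\top}\in\mathcal{H}$ I would look for $U=(u,v,y,z)^{\top}\in D(\mathcal{A})$ solving $(I-\mathcal{A})U=F$. The first and third components force $v=u-f_1$ and $z=y-f_3$ (both in $H_0^1(\Omega)$), and substituting reduces the problem to the coupled elliptic system, to be read in the variational sense on $V:=H_0^1(\Omega)\times H_0^1(\Omega)$,
\[
u-\divv\!\big((a+b(x))\nabla u\big)+c(x)y=f_1+f_2+c(x)f_3-\divv\!\big(b(x)\nabla f_1\big),\qquad y-\Delta y-c(x)u=f_3+f_4-c(x)f_1 .
\]
I would solve it by Lax--Milgram with the sesquilinear form $\mathcal{B}((u,y),(\varphi,\psi))=\int_{\Omega}u\bar\varphi+\int_{\Omega}(a+b)\nabla u\cdot\nabla\bar\varphi+\int_{\Omega}cy\bar\varphi+\int_{\Omega}y\bar\psi+\int_{\Omega}\nabla y\cdot\nabla\bar\psi-\int_{\Omega}cu\bar\psi$ on $V$. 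Continuity follows from $b,c\in L^{\infty}(\Omega)$; for coercivity, testing with $(\varphi,\psi)=(u,y)$ the two coupling integrals are complex conjugates (here $c$ real is essential), so they cancel in the real part, and with $a+b\ge a>0$ together with Poincaré one gets $\operatorname{Re}\mathcal{B}((u,y),(u,y))\gtrsim\|u\|_{H_0^1}^{2}+\|y\|_{H_0^1}^{2}$. The antilinear functional pairing $F$ with $(\varphi,\psi)$ is bounded on $V$, where $\divv(b\nabla f_1)$ must be kept in distributional form since, $b$ being non-smooth, it only lies in $H^{-1}(\Omega)$. Lax--Milgram then gives a unique $(u,y)\in V$.

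It remains to check $U=(u,u-f_1,y,y-f_3)^{\top}\in D(\mathcal{A})$: the first variational identity reads $-\divv(a\nabla u+b\nabla v)=f_2-v-cz$ in $\mathcal{D}'(\Omega)$, whose right-hand side is in $L^2(\Omega)$, so $\divv(a\nabla u+b\nabla v)\in L^2(\Omega)$; likewise $\Delta y=z-cv-f_4\in L^2(\Omega)$, hence $y\in H^2(\Omega)\cap H_0^1(\Omega)$ by elliptic regularity for the Dirichlet Laplacian. Thus $U\in D(\mathcal{A})$ and $(I-\mathcal{A})U=F$, which together with dissipativity and density yields, via Lumer--Phillips, that $\mathcal{A}$ is m-dissipative. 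I expect the only genuinely delicate point to be this maximality step: the variational formulation must be arranged so that the \emph{skew} velocity-coupling $c(x)y_t$, $-c(x)u_t$ does not destroy coercivity, and the non-smooth coefficient $b$ must be carried through in weak (distributional) form; everything else is routine.
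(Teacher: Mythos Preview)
Your proof is correct and takes a slightly different route from the paper's for the maximality step. The paper solves $-\mathcal{A}U=F$ instead of $(I-\mathcal{A})U=F$; at $\lambda=0$ the relations $v=-f_1$, $z=-f_3$ turn $v,z$ into known data, so the resulting elliptic system for $(u,y)$ \emph{decouples} and the Lax--Milgram form is simply $\int_\Omega(a\nabla u\cdot\nabla\bar\varphi+\nabla y\cdot\nabla\bar\psi)\,dx$, with no coupling terms to worry about; the paper then invokes openness of $\rho(\mathcal{A})$ to pass to small $\lambda>0$. Your approach is more direct---you land on the right range equation immediately---but at the cost of carrying the skew coupling through the variational problem; your observation that the $c$-terms cancel in the real part is exactly what makes this work. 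One small correction: your claim that $(C_c^\infty(\Omega))^4\subset D(\mathcal{A})$ is not true in general, since for non-smooth $b$ and smooth $v$ the quantity $\divv(b\nabla v)$ need not lie in $L^2(\Omega)$; fortunately this is harmless, as density of $D(\mathcal{A})$ follows automatically from dissipativity together with surjectivity of $I-\mathcal{A}$ in a Hilbert space (this is how the paper argues it, citing Pazy).
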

\begin{proof}
	For all $U=(u,v,y,z)^{\top}\in D(\mathcal{A})$,  we have 
	\begin{equation}\label{kvdissipation}
	\Re\left(\mathcal{A}U,U\right)_{\mathcal{H}}=-\int_{\Omega}b(x)|\nabla v|^2dx\leq 0,
	\end{equation}
	which implies that $\mathcal{A}$ is dissipative. Now, let $F=(f_1,f_2,f_3,f_4)^{\top}\in\mathcal{H}$, we prove the existence of  $$U=(u,v,y,z)^{\top}\in D(\mathcal{A})$$ unique solution of the equation
	\begin{equation}\label{kvmaximal0}
	-\mathcal{A}U=F.
	\end{equation}
	Equivalently, we have the following system
	\begin{eqnarray}
	-v&=&f_1,\label{kvmaximal1}\\
	-\divv(a\nabla u+b(x)\nabla v)+c(x)z&=&f_2,\label{kvmaximal2}\\
	-z&=&f_3,\label{kvmaximal3}\\
	-\Delta y-c(x)v&=&f_4.\label{kvmaximal4}
	\end{eqnarray}
	Inserting \eqref{kvmaximal1}, \eqref{kvmaximal3} into \eqref{kvmaximal2} and \eqref{kvmaximal4}, we get 
	\begin{eqnarray}
	-\divv(a\nabla u-b(x)\nabla f_1)&=&f_2+c(x) f_3,\label{kvmaximal5}\\
	-\Delta y&=&f_4-c(x) f_1.\label{kvmaximal6}
	\end{eqnarray}
Let $(\varphi,\psi)\in H_0^1(\Omega)\times H_0^1(\Omega)$.	Multiplying  \eqref{kvmaximal5} and \eqref{kvmaximal6} by  $\bar{\varphi}$ and $\bar{\psi}$ respectively, and integrate over $\Omega$, we obtain	
\begin{equation}\label{VP}
	a((u,v),(\varphi,\psi))=L(\varphi,\psi),\quad \forall \,(\varphi,\psi)\in H_0^1(\Omega)\times H_0^1(\Omega),
	\end{equation}
where
\begin{equation}\label{coercive}
a((u,v),(\varphi,\psi))=\int_{\Omega}\left( a\nabla u\cdot \nabla\bar{\varphi} +\nabla y\cdot\nabla\bar{\psi}\right) dx
\end{equation}
and
\begin{equation}\label{linear}
L(\varphi,\psi)=\int_{\Omega}\left(f_2+c(x) f_3\right) \bar{\varphi}dx+\int_{\Omega}b(x)\nabla f_1\cdot \nabla\bar{\varphi}dx+\int_{\Omega}\left( f_4-c(x) f_1 \right)\bar{\psi}dx.
\end{equation}
Thanks to \eqref{coercive}, \eqref{linear}, we have that $a$ is a sesquilinear,  continuous and coercive form on $(H_0^1 (\Omega) \times H_0^1 (\Omega))^2$,  and $L$ is a antilinear continuous form on $H_0^1 (\Omega) \times H_0^1 (\Omega)$. Then, using Lax-Milgram theorem, we deduce that there exists $(u,y) \in H_0^1 (\Omega) \times H_0^1 (\Omega)$ unique solution of the variational problem \eqref{VP}. By using the classical elliptic regularity, we deduce that \eqref{kvmaximal5}-\eqref{kvmaximal6} admits a unique solution $(u,y)\in H_0^1(\Omega)\times \left(H^2(\Omega)\cap H_0^1(\Omega)\right)$ such that $\divv(a\nabla u-b(x)\nabla f_1)\in L^2(\Omega)$. By taking $F=(0,0,0,0)^{\top}$ in \eqref{kvmaximal0} it is easy to see that $\ker\{\mathcal{A}\}=\{0\}$. Consequently,  we get $U=(u,-f_1,y,-f_3)^{\top}\in D(\mathcal{A})$ is a unique solution of \eqref{kvmaximal0}. Then, $\mathcal{A}$ is an isomorphism and since $\rho(\mathcal{A})$ is open set of $\mathcal{C}$ (see Theorem 6.7 (Chapter III) in \cite{Kato01}), we easily get $R(\lambda I-\mathcal{A})=\mathcal{H}$ for a sufficiently small $\lambda>0$. This, together with the dissipativeness of $\mathcal{A}$, imply that $D(\mathcal{A})$ is dense in $\mathcal{H}$ and that $\mathcal{A}$ is m-dissipative in $\mathcal{H}$ (see Theorem 4.5, 4.6 in \cite{Pazy01}). The proof is thus complete.  
\end{proof}

According to Lumer-Phillips Theorem (see \cite{Pazy01}), Proposition \ref{M-Dissipative} implies that the operator $\mathcal{A}$ generates a $C_0-$semigroup of contractions $e^{t\mathcal{A}}$ in $\mathcal{H}$ which gives the well-posedness of \eqref{Cauchy}. Then, we have the following result 
\begin{theoreme}
	For any $U_0\in\mathcal{H}$, Problem \eqref{Cauchy} admits a unique weak solution
	$$
	U(t)\in C^0(\R^+;\mathcal{H}). 
	$$
	Moreover, if  $U_0\in D(\mathcal{A}) $, then Problem \eqref{Cauchy} admits a unique strong solution $U$ satisfies
	$$
	U(t)\in C^1(\R^+,\mathcal{H})\cap C^0(\R^+,D(\mathcal{A})).
	$$
\end{theoreme}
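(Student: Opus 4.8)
The plan is to deduce the statement directly from the m-dissipativity of $\mathcal{A}$ established in Proposition \ref{M-Dissipative}, combined with the classical Lumer--Phillips generation theorem and the standard regularity theory for $C_0$-semigroups of contractions.

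First I would recall that Proposition \ref{M-Dissipative} asserts that $\mathcal{A}$ is m-dissipative on $\mathcal{H}$; in particular $\mathcal{A}$ is densely defined, this density having been obtained in the proof of that proposition (from the range condition $R(\lambda I-\mathcal{A})=\mathcal{H}$ for small $\lambda>0$ together with the dissipativity \eqref{kvdissipation}). Hence the hypotheses of the Lumer--Phillips theorem (see \cite{Pazy01}) are satisfied, and $\mathcal{A}$ generates a $C_0$-semigroup of contractions $\left(e^{t\mathcal{A}}\right)_{t\geq 0}$ on $\mathcal{H}$.

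Next, for an arbitrary $U_0\in\mathcal{H}$ I would set $U(t):=e^{t\mathcal{A}}U_0$. By the general theory of $C_0$-semigroups the map $t\mapsto e^{t\mathcal{A}}U_0$ belongs to $C^0(\R^+;\mathcal{H})$, so $U$ is the sought weak (mild) solution of \eqref{Cauchy}; uniqueness in this class is the standard uniqueness of mild solutions, which follows from the semigroup property (if $V$ is another such solution, then $s\mapsto e^{(t-s)\mathcal{A}}V(s)$ is constant on $[0,t]$). For $U_0\in D(\mathcal{A})$, the classical result on strong solutions (see \cite{Pazy01}) gives that $t\mapsto e^{t\mathcal{A}}U_0$ lies in $C^1(\R^+;\mathcal{H})\cap C^0(\R^+;D(\mathcal{A}))$, is differentiable with $\frac{d}{dt}e^{t\mathcal{A}}U_0=\mathcal{A}e^{t\mathcal{A}}U_0$, and therefore solves \eqref{Cauchy} in the strong sense; uniqueness again follows from the semigroup property.

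There is essentially no serious obstacle at this stage: all the analytic work — verifying dissipativity through the energy identity \eqref{kvdissipation}, and solving the resolvent equation $-\mathcal{A}U=F$ via the Lax--Milgram theorem and elliptic regularity — has already been carried out in Proposition \ref{M-Dissipative}. The only point requiring a little care is to make sure that both the density of $D(\mathcal{A})$ and the range condition are in hand, so that the Lumer--Phillips theorem (rather than merely a Hille--Yosida resolvent estimate) applies; both facts are immediate consequences of Proposition \ref{M-Dissipative}.
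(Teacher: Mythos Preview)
Your proposal is correct and follows exactly the same approach as the paper: the paper does not give a separate proof of this theorem but simply states, immediately before it, that by the Lumer--Phillips theorem and Proposition \ref{M-Dissipative} the operator $\mathcal{A}$ generates a $C_0$-semigroup of contractions, which yields the well-posedness of \eqref{Cauchy}. Your write-up is in fact more detailed than what the paper provides.
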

\subsection{Strong Stability}
This subsection is devoted to study the strong stability of System \eqref{k-v}-\eqref{boundary conditions} in the sense that its energy converges to zero when $t$ goes to infinity for all initial data in $\mathcal{H}$.  The proof will be done using the unique continuation theorem based on a Carleman estimate and a general criteria of Arendt-Batty \cite{Arendt01}. For this aim, we assume that there exists  constants $b_0>0$ and $c_0>0$ and two nonempty open sets $\omega_b\subset \Omega$ and $\omega_c\subset\Omega$, such that 
\begin{equation}\label{b}
b(x)\geq b_0>0,\quad\forall\, x\in \omega_b,
\end{equation}
\begin{equation}\label{c}
c(x)\geq c_0>0,\quad\forall\, x\in \omega_c.
\end{equation}
In this part, we prove that the energy of the System \eqref{k-v}-\eqref{boundary conditions} decays to zero as $t$ tends to infinity if one of the following assumptions hold:\\

(A1) Assume that $\omega_b$ and $\omega_c$ are non-empty open subsets of $\Omega$ such that $\omega_c\subset \omega_b$ and $\meas\left(\overline{\omega_b}\cap \Gamma\right)>0$ (see Figures \ref{F1}, \ref{F2}, \ref{F3}).\\

(A2) Assume that $\omega_b$ and $\omega_c$ are non-empty open subsets of $\Omega$ such that $\omega=\omega_b\cap\omega_c\neq \emptyset$. Also, assume that $\omega$ satisfies meas$(\overline{\omega}\cap\Gamma)>0$ (see Figures \ref{F4}, \ref{F5}, \ref{F6}).\\
 
(A3) Assume that $\omega_b$ and $\omega_c$ are non-empty open subset of $\Omega$ such that $\omega=\omega_b\cap\omega_c\neq\emptyset$ , $\meas\left(\overline{\omega_b}\cap \Gamma\right)>0$ and $\omega_c$ not near the boundary (see Figure \ref{F7}). \\

(A4) Assume that $\omega_b$ is non-empty open subsets of $\Omega$ and $c(x)=c_0\in \mathbb{R}$ in $\Omega$. Also, assume that $\omega_b$ is not near the boundary (see Figure \ref{F8}). \\
%\begin{center}
%\begin{figure}
%\includegraphics[width=5cm,height=5cm]{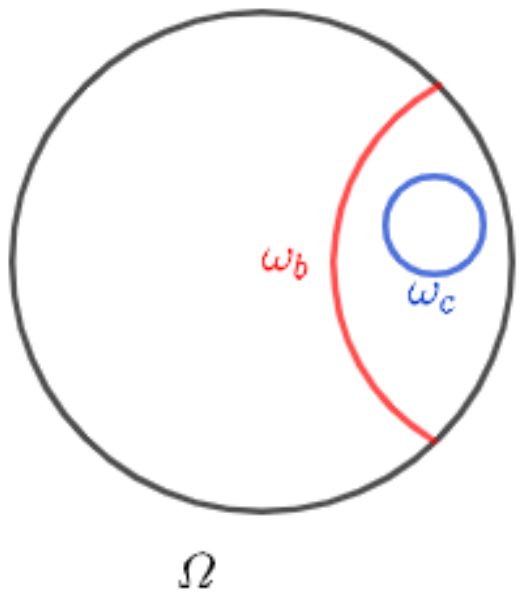}\caption{}	~\includegraphics[width=5cm,height=5cm]{Figure1.pdf}\caption{}	%\includegraphics[width=2.5cm,height=1.7cm]{Figure1.pdf}~\includegraphics[width=2.5cm,height=1.7cm]{Figure1.pdf}
%\end{figure}	

\begin{figure}[!h]
\begin{floatrow}
\ffigbox{\includegraphics[scale = 0.85]{Figure1.pdf}}{\caption{}\label{F1}}
\ffigbox{\includegraphics[scale = 0.85]{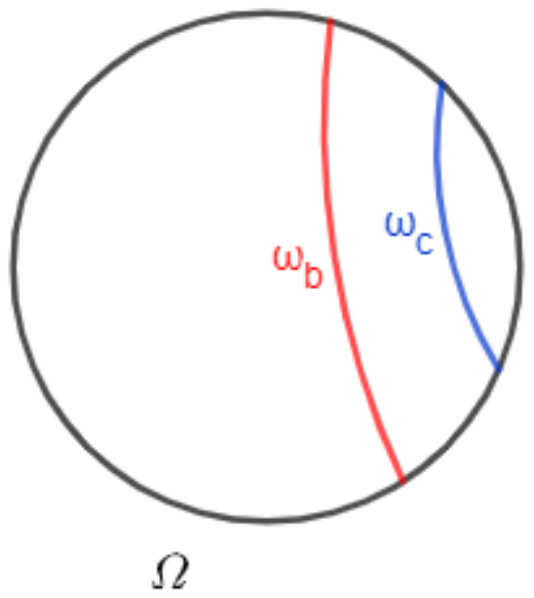}}{\caption{}\label{F2}}
\end{floatrow}
\end{figure}

\begin{figure}[!h]
\begin{floatrow}
\ffigbox{\includegraphics[scale = 0.6]{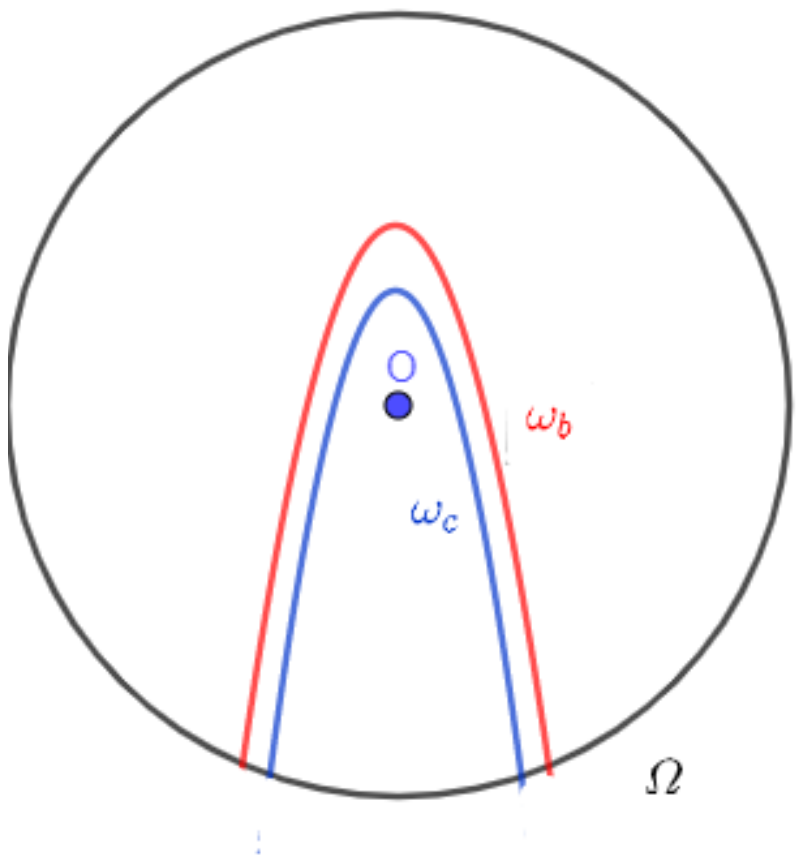}}{\caption{}\label{F3}}
\ffigbox{\includegraphics[scale = 0.8]{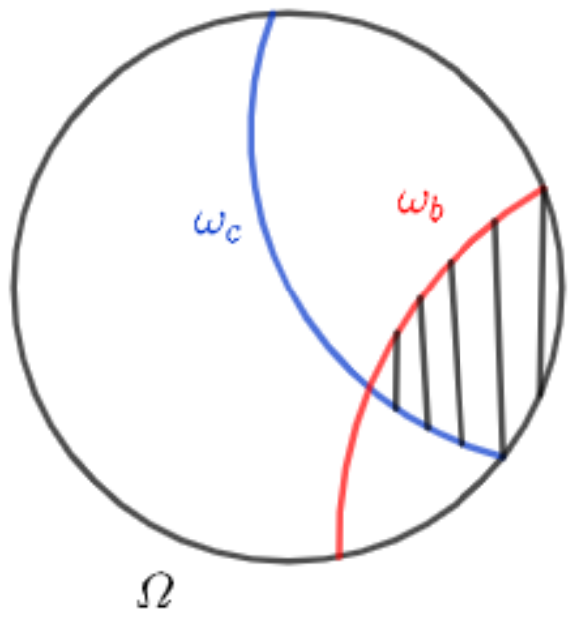}}{\caption{}\label{F4}}
\end{floatrow}
\end{figure}

\begin{figure}[!h]
\begin{floatrow}
\ffigbox{\includegraphics[scale = 0.8]{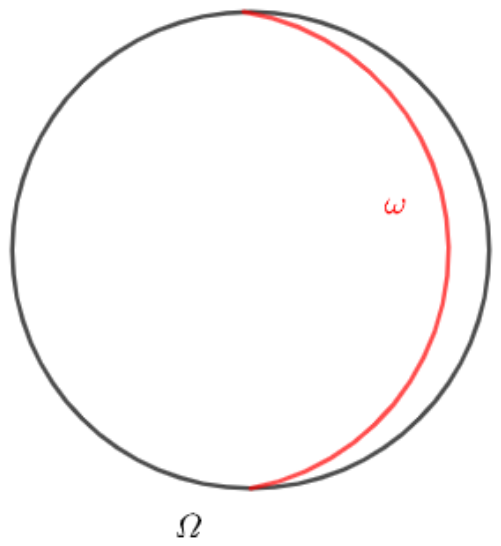}}{\caption{}\label{F5}}
\ffigbox{\includegraphics[scale = 0.55]{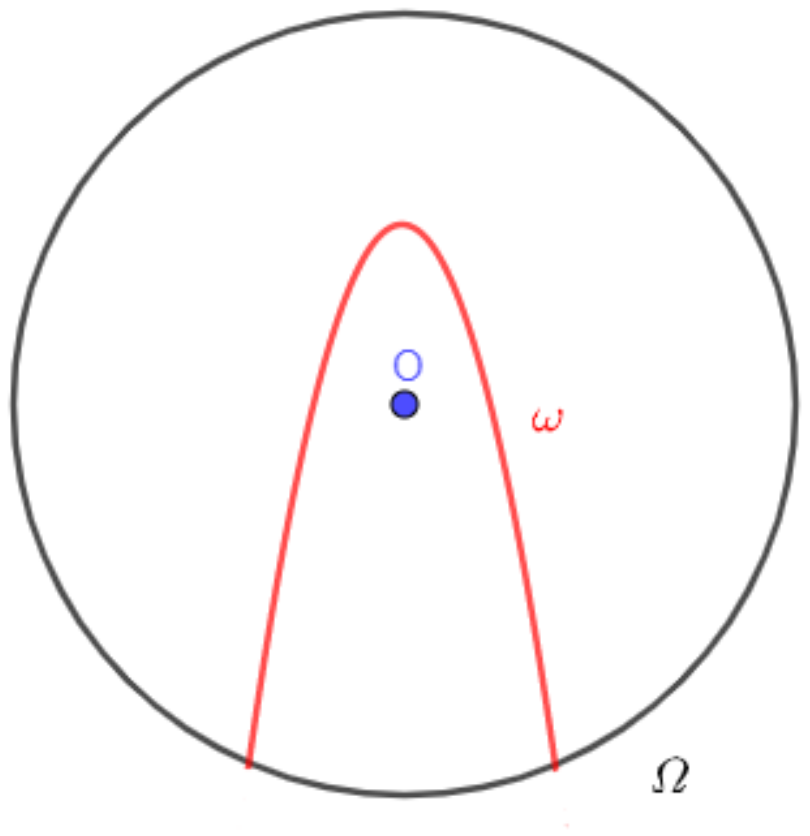}}{\caption{}\label{F6}}
\end{floatrow}
\end{figure}

\begin{figure}[!h]
\begin{floatrow}
\ffigbox{\includegraphics[scale = 0.6]{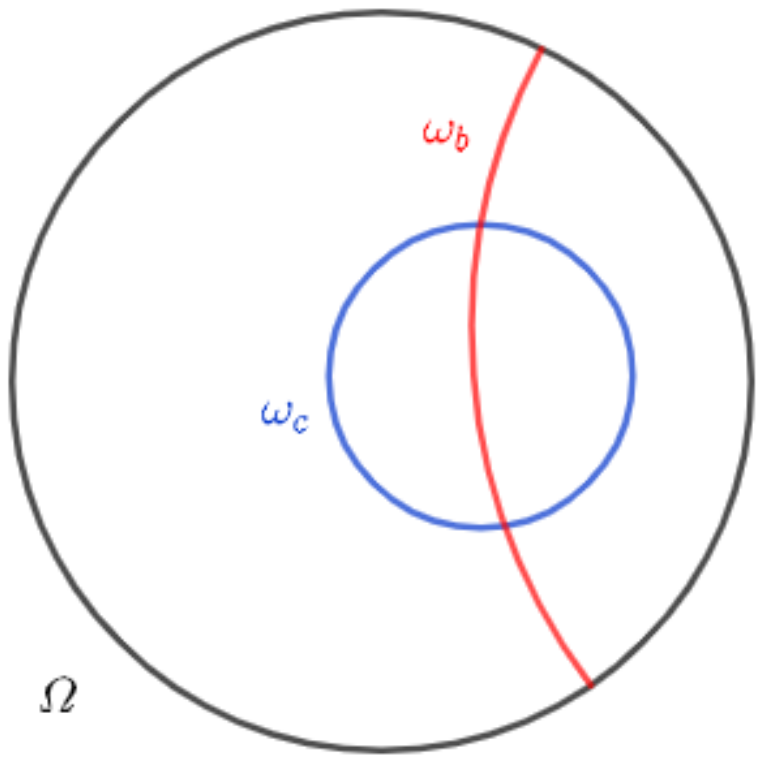}}{\caption{}\label{F7}}
\ffigbox{\includegraphics[scale = 0.85]{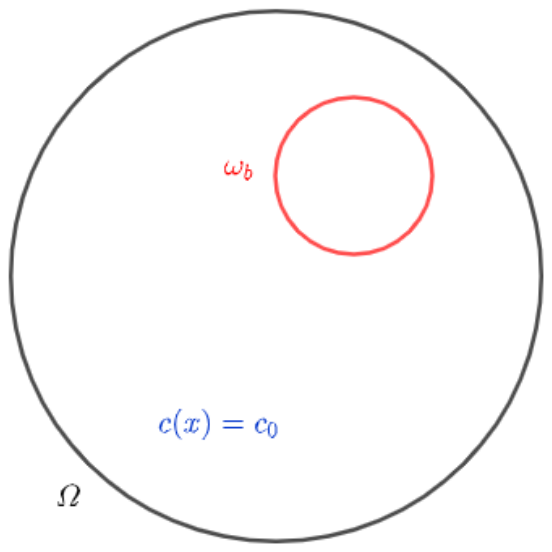}}{\caption{}\label{F8}}
\end{floatrow}
\end{figure}

%\end{center}
\noindent We note that some of these figures were also mentioned because they are examples of the geometric conditions we consider in Section \ref{Polynomial Stability}, where we study the polynomial stability of the system.\\
Before stating the main theorem of this section, we will give the proof of a local unique continuation result for a coupled system of wave equations.

\noindent We define the following elliptic operator $P$ defined on a product space by
\begin{equation}
\begin{array}{lll}
P:H^2(V)\times H^2(V)& \rightarrow & L^2(V)\times L^2(V)\\
\hspace{1.5cm}(u,y)&\rightarrow &(\Delta u, \Delta y)
\end{array}
\end{equation}
and the following function $g$ defined by
\begin{equation}
\begin{array}{lll}
g:L^2(V)\times L^2(V)& \rightarrow &L^2(V)\times L^2(V)\\
\hspace{1.5cm}(u,y)&\rightarrow &(\frac{1}{a}(-\la^2 u+c(x)i\la y), -\la^2  y-c(x)i\la u)
\end{array}
\end{equation}
\begin{Lemma}{(See \cite{Hormander} also \cite{Hayek2020}})\label{carl}
Let $V$ be a bounded open set in $\R^N$ and let $\varphi=e^{\rho \psi}$ with $\psi\in C^{\infty}(\R^N,\R)$; $\abs{\nabla _x \psi}>0$ and $\rho>0$ large enough. Then, there exist $\tau_0$ large enough and $C>0$ such that
\begin{equation}\label{carleman-lemma}
\tau^3\|e^{\tau \varphi}u \|^2_{L^2(V)}+\tau\|e^{\tau \varphi}\nabla _x u \|^2_{L^2(V)}\leq C\|e^{\tau \varphi}\Delta u \|^2_{L^2(V)},\quad \text{for all}\,\,  u\in H_0^2(V) \,\,\text{and}\,\,  \tau>\tau_0.
\end{equation}

\end{Lemma}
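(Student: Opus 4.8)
The plan is to establish \eqref{carleman-lemma} by the classical conjugation argument for Carleman estimates. Set $w=e^{\tau\varphi}u$; since $u\in H_0^2(V)$ and $\varphi$ is smooth on $\overline V$, we have $w\in H_0^2(V)$, so $w$ and its first derivatives vanish on $\partial V$ and every boundary term produced below disappears. A direct computation gives
\[
e^{\tau\varphi}\Delta\!\left(e^{-\tau\varphi}w\right)=\Delta w+\tau^2|\nabla\varphi|^2 w-2\tau\,\nabla\varphi\cdot\nabla w-\tau(\Delta\varphi)w=:L_\tau w .
\]
Write $L_\tau=A+B$ with $Aw=\Delta w+\tau^2|\nabla\varphi|^2w$ (formally self-adjoint) and $Bw=-2\tau\,\nabla\varphi\cdot\nabla w-\tau(\Delta\varphi)w$ (formally skew-adjoint on $H_0^2(V)$, as one checks by integration by parts). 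Expanding the square, the mixed term collapses into a commutator and
\[
\|e^{\tau\varphi}\Delta u\|_{L^2(V)}^2=\|L_\tau w\|_{L^2(V)}^2=\|Aw\|_{L^2(V)}^2+\|Bw\|_{L^2(V)}^2+\big([A,B]\,w,w\big)_{L^2(V)} .
\]

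The core of the proof is the analysis of $[A,B]$. After integration by parts its leading part is of order $\tau^3$ and is a quadratic form in $w$ and $\nabla w$ whose symbol involves the Hessian $D^2\varphi$, the remaining contributions being $O(\tau^2\|w\|_{H^1(V)}^2)$, hence absorbable once $\tau$ is large. This is where the shape of the weight matters: since $D^2\varphi=\rho^2 e^{\rho\psi}\,\nabla\psi\otimes\nabla\psi+\rho\, e^{\rho\psi}D^2\psi$, for $\rho$ large the rank-one term $\nabla\psi\otimes\nabla\psi$ dominates, and together with $|\nabla_x\psi|>0$ this is precisely Hörmander's strong pseudo-convexity of $\varphi$ with respect to $\Delta$. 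Consequently one obtains a constant $c>0$ and $\tau_0$ such that, for every $\tau\geq\tau_0$,
\[
\|Aw\|_{L^2(V)}^2+\|Bw\|_{L^2(V)}^2+\big([A,B]\,w,w\big)_{L^2(V)}\geq c\left(\tau^3\|w\|_{L^2(V)}^2+\tau\|\nabla w\|_{L^2(V)}^2\right),
\]
the elliptic directions being controlled by $\|Aw\|^2$ and the remaining ones by the commutator.

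It remains to return to $u$. Since $e^{\tau\varphi}\nabla_x u=\nabla w-\tau w\nabla\varphi$ with $\nabla\varphi$ bounded on $\overline V$, one has $\tau\,\|e^{\tau\varphi}\nabla_x u\|_{L^2(V)}^2\leq 2\tau\|\nabla w\|_{L^2(V)}^2+C\tau^3\|w\|_{L^2(V)}^2$, while $\tau^3\|e^{\tau\varphi}u\|_{L^2(V)}^2=\tau^3\|w\|_{L^2(V)}^2$; combining these with the two displays above yields \eqref{carleman-lemma}. The main obstacle is the middle step: carrying out the commutator computation without errors and verifying the lower bound (the sub-ellipticity inequality), which is exactly what forces $\rho$ to be taken large. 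This computation is standard — see \cite{Hormander} for the general theory and \cite{Hayek2020} for a version adapted to the present framework — and I would either reproduce it verbatim or simply cite it, as is done above.
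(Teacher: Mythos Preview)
The paper does not actually give a proof of this lemma: it is stated with the attribution ``(See \cite{Hormander} also \cite{Hayek2020})'' and used as a black box in the subsequent Proposition. Your sketch is the standard conjugation/commutator argument that underlies the cited result, and it is correct in outline; in particular your remark that the computation of $[A,B]$ and the ensuing sub-ellipticity bound is ``exactly what forces $\rho$ to be taken large'' and that you ``would either reproduce it verbatim or simply cite it'' matches precisely what the paper does --- it cites it.
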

\begin{Proposition}\label{carleman-prop}
Let $\Omega$ be a bounded open set in $\R^N$ and $x_0 $ be a point in $\Omega$. In a neighborhood $V$ of $x_0\in \Omega$, we take a function $f$ such that $\nabla f\neq 0$ in $\overline{V}$. Moreover, let $(u,y)\in H^2(V)\times H^2(V)$ be a solution of $P(u,y)=g(u,y)$. If $u=y=0$ in $\{x\in V; f(x)\geq f(x_0)\}$ then $u=y=0$ in a neighborhood of $x_0$.
\end{Proposition}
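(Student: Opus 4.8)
The plan is to reduce Proposition \ref{carleman-prop} to the scalar Carleman estimate of Lemma \ref{carl} applied to each component, exploiting the fact that the coupling term in $g$ is of lower order (it involves $u,y$ but not their derivatives). First I would perform the standard localization: since $\nabla f \neq 0$ on $\overline{V}$, after shrinking $V$ I may assume $f$ has no critical points, and I choose $\psi$ to be (a small perturbation of) $f$ so that $\abs{\nabla_x \psi} > 0$ on $\overline{V}$ and set $\varphi = e^{\rho\psi}$ with $\rho$ large. The hypothesis $u = y = 0$ on $\{x \in V : f(x) \geq f(x_0)\}$ lets me pick a cutoff $\chi \in C_c^\infty(V)$ with $\chi \equiv 1$ near $x_0$ and such that, on the support of $\nabla \chi$ intersected with the ``bad'' region $\{f < f(x_0)\}$, the weight $\varphi$ is strictly smaller than its value near $x_0$; this is the usual geometric bookkeeping that makes the commutator terms absorbable.

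Next I would apply \eqref{carleman-lemma} to $\chi u$ and to $\chi y$, which both lie in $H_0^2(V)$. Writing $\Delta(\chi u) = \chi \Delta u + [\Delta, \chi]u$ and using $P(u,y) = g(u,y)$, i.e. $\Delta u = \frac{1}{a}(-\lambda^2 u + c(x) i\lambda y)$ and $\Delta y = -\lambda^2 y - c(x) i\lambda u$, the leading contribution on the right-hand side becomes
\begin{equation*}
C\left( \norm{e^{\tau\varphi} \chi u}_{L^2}^2 + \norm{e^{\tau\varphi}\chi y}_{L^2}^2 + \norm{e^{\tau\varphi}[\Delta,\chi]u}_{L^2}^2 + \norm{e^{\tau\varphi}[\Delta,\chi]y}_{L^2}^2 \right),
\end{equation*}
where the constant $C$ now also absorbs $\lambda$, $\norm{c}_{L^\infty}$ and $a^{-1}$. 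Adding the two estimates, the zeroth-order terms $\norm{e^{\tau\varphi}\chi u}^2 + \norm{e^{\tau\varphi}\chi y}^2$ on the right are dominated by the $\tau^3$-terms on the left once $\tau$ is large enough, so they may be absorbed. This is the key point where the lower-order nature of the coupling is used: because $g$ contains no derivatives of $u,y$, no gradient terms appear on the right beyond the commutators, and those commutators are supported in $\supp \nabla\chi$.

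It remains to handle the commutator terms $[\Delta,\chi]u = 2\nabla\chi\cdot\nabla u + (\Delta\chi)u$, which are supported in $\supp\nabla\chi \subset V \setminus \{\chi \equiv 1\}$. By the choice of $\chi$, on this set we have $\varphi \leq c_1 < c_2 \leq \varphi$ near $x_0$; more precisely, splitting $\supp\nabla\chi$ into its intersection with $\{f \geq f(x_0)\}$ — where $u = y = 0$ by hypothesis so the commutator vanishes — and with $\{f < f(x_0)\}$ — where $\varphi$ is bounded above by a constant strictly less than $\inf_{\{\chi \equiv 1\}}\varphi$ — one gets
\begin{equation*}
\norm{e^{\tau\varphi}[\Delta,\chi]u}_{L^2}^2 + \norm{e^{\tau\varphi}[\Delta,\chi]y}_{L^2}^2 \leq C e^{2\tau c_1}\left( \norm{u}_{H^1(V)}^2 + \norm{y}_{H^1(V)}^2 \right),
\end{equation*}
while the left-hand side of the summed Carleman inequality is bounded below by $e^{2\tau c_2}\left(\norm{u}_{L^2(B)}^2 + \norm{y}_{L^2(B)}^2\right)$ for a small ball $B$ around $x_0$ on which $\chi \equiv 1$. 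Dividing by $e^{2\tau c_2}$ and letting $\tau \to +\infty$, the right-hand side tends to $0$ since $c_1 < c_2$, forcing $u = y = 0$ on $B$. The main obstacle I anticipate is purely the geometric calibration — choosing $\psi$, $\rho$, and $\chi$ so that the strict inequality $c_1 < c_2$ genuinely holds on the relevant pieces of $\supp\nabla\chi$; this requires $\varphi$ to be monotone in the direction transverse to the level set $\{f = f(x_0)\}$, which is exactly why one takes $\varphi = e^{\rho\psi}$ with $\rho$ large rather than $\psi$ itself, and why the level-set hypothesis is phrased with $f(x_0)$.
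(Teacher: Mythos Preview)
Your approach is the same as the paper's: cut off, apply Lemma~\ref{carl} to each component, absorb the zero-order coupling with the $\tau^3$ term, and let $\tau\to\infty$ after separating the weight on a ball around $x_0$ from the weight on $\supp\nabla\chi\cap\{f<f(x_0)\}$. The one point you leave open---and correctly flag as the obstacle---is resolved in the paper by the explicit choice $\psi(x)=f(x)-c\,|x-x_0|^2$: since $\supp\nabla\chi$ stays at positive distance from $x_0$, the quadratic term forces $\psi$ strictly below $\psi(x_0)=f(x_0)$ there even when $f(x)$ is close to $f(x_0)$, which is exactly what delivers $c_1<c_2$. Your closing remark slightly misidentifies the mechanism: passing to $\varphi=e^{\rho\psi}$ does not change level sets or produce the separation---it is needed for the (pseudo)convexity hypothesis behind Lemma~\ref{carl}; the separation comes from the $-c|x-x_0|^2$ perturbation, not from the exponential.
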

\begin{proof}
We call $W$ the region $\{x\in V; f(x)\geq f(x_0)\}$. We choose $V^{\prime}$ and $V^{\prime\prime}$ neighborhoods of $x_0$ such that $V^{\prime\prime}\subseteq V^{\prime}\subseteq V$, and we choose a function $\chi\in C_c^{\infty}(V^{\prime})$ such that $\chi=1$ in $V^{\prime\prime}$. Set $\tilde{u}=\chi u$ and $\tilde{y}=\chi y$. Then, $(\tilde{u},\tilde{y})\in H^2_0(V)\times H^2_0(V)$. Let $\psi=f(x)-c\abs{x-x_0}^2$ and set $\varphi=e^{\rho\psi}$. Then, apply the Carleman estimate of Lemma \ref{carl} to $\tilde{u}$ and $\tilde{y}$ respectively, then sum the two inequalities we obtain
\begin{equation}\label{carl1}
\tau^3 \int_{V\prime} e^{2\tau \varphi}\left(\abs{\tilde{u}}^2+\abs{\tilde{y}}^2\right)dx+\tau \int_{V^{\prime}}e^{2\tau \varphi}\left(\abs{\nabla\tilde{u}}^2+\abs{\nabla\tilde{y}}^2\right)dx\leq C \int_{V^{\prime}} e^{2\tau \varphi}\left(\abs{\Delta\tilde{u}}^2+\abs{\Delta\tilde{y}}^2\right)dx
\end{equation}
As $V^{\prime\prime}\subseteq V^{\prime}$ and $\chi \in C_c^{\infty}(V^{\prime})$ such that $\chi=1$ in $V^{\prime\prime}$, we get
\begin{equation}\label{carl2}
\begin{array}{l}
\displaystyle{\tau^3 \int_{V^{\prime\prime}} e^{2\tau \varphi}\left(\abs{u}^2+\abs{y}^2\right)dx+\tau \int_{V^{\prime\prime}}e^{2\tau \varphi}\left(\abs{\nabla u}^2+\abs{\nabla y}^2\right)dx\leq C \int_{V^{\prime\prime}} e^{2\tau \varphi}\left(\abs{\Delta u}^2+\abs{\Delta y}^2\right)dx}\\ [0.1in]
\displaystyle{+C \int_{V^{\prime}\backslash V^{\prime\prime}} e^{2\tau \varphi}\left(\abs{\Delta\tilde{u}}^2+\abs{\Delta\tilde{y}}^2\right)dx}.
\end{array}
\end{equation}
This implies that, 
\begin{equation}\label{carl3}
\begin{array}{l}
\displaystyle{\tau^3 \int_{V^{\prime\prime}} e^{2\tau \varphi}\left(\abs{u}^2+\abs{y}^2\right)dx\leq C \int_{V^{\prime\prime}} e^{2\tau \varphi}\left(\abs{\Delta u}^2+\abs{\Delta y}^2\right)dx
+C \int_{V^{\prime}\backslash V^{\prime\prime}} e^{2\tau \varphi}\left(\abs{\Delta\tilde{u}}^2+\abs{\Delta\tilde{y}}^2\right)dx}.
\end{array}
\end{equation}
We have that, $a\Delta u=-\la^2u+c(x)i\la y$ and $\Delta y=-\la^2u-c(x)i\la u$. Then, there exists $C_{\la,c,a}>0$ such that
\begin{equation}\label{carl4}
\left(\tau^3-C_{\la,c,a}\right) \int_{V^{\prime\prime}} e^{2\tau \varphi}\left(\abs{u}^2+\abs{y}^2\right)dx\leq C \int_{V\prime\backslash V^{\prime\prime}} e^{2\tau \varphi}\left(\abs{\Delta\tilde{u}}^2+\abs{\Delta\tilde{y}}^2\right)dx.
\end{equation}
Then, there exists $\tau>0$ large enough and $C>0$ such that
\begin{equation}\label{carl5}
\tau^3 \int_{V^{\prime\prime} }e^{2\tau \varphi}\left(\abs{u}^2+\abs{y}^2\right)dx\leq C \int_{V^{\prime}\backslash V^{\prime\prime}} e^{2\tau \varphi}\left(\abs{\Delta\tilde{u}}^2+\abs{\Delta\tilde{y}}^2\right)dx.
\end{equation}
By using that $u=y=0$ in $W$, we obtain
\begin{equation}\label{carl6}
\tau^3 \int_{V^{\prime\prime}} e^{2\tau \varphi}\left(\abs{u}^2+\abs{y}^2\right)dx\leq C \int_{S} e^{2\tau \varphi}\left(\abs{\Delta\tilde{u}}^2+\abs{\Delta\tilde{y}}^2\right)dx.
\end{equation}
where $S=V^{\prime}\backslash \left(V^{\prime\prime}\cup W\right)$.\\
For all $\varepsilon\in \R$, we set $V_{\varepsilon}=\{x\in V; \varphi(x)\leq\varphi(x_0)-\varepsilon\}$ and $V_{\varepsilon}^{\prime}=\{x\in V; \varphi(x)\geq\varphi(x_0)-\frac{\varepsilon}{2}\}$. There exists $\varepsilon$ such that $S\subset V_{\varepsilon}$. Then, choose a ball $B_0$ with center $x_0$ such that $B_0\subset V^{\prime\prime}\cap V^{\prime}_{\varepsilon}$. Hence, using \eqref{carl6}, we have 
\begin{equation}\label{carl7}
 \int_{B_0} \left(\abs{u}^2+\abs{y}^2\right)dx\leq \frac{C e^{-\tau \varphi}}{\tau^3}  \int_{S} \left(\abs{\Delta\tilde{u}}^2+\abs{\Delta\tilde{y}}^2\right)dx.
\end{equation}
Letting $\tau$ tends to infinity, we obtain $u=y=0$ in $B_0$. Hence, we reached our desired result.
\end{proof}

\begin{Theorem}{(Calderón Theorem) }\label{Calderon}
Let $\Omega$ be a connected open set in $\R^{N}$ and let $\omega\subset \Omega$, with $\omega\neq \emptyset$. If $(u,y)\in H^2(\Omega)\times H^2(\Omega)$ satisfies $P(u,y)=g(u,y)$ in $\Omega$ and $u=y=0$ in $\omega$, then $u$ and $y$ vanishes in $\Omega$.
\end{Theorem}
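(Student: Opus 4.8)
The statement is a global unique continuation principle, and the natural strategy is to promote the \emph{local} unique continuation of Proposition~\ref{carleman-prop} to a global one by a connectedness (propagation of zeros) argument, in the spirit of Aronszajn--H\"ormander. Set
\[
\Omega_0 := \{\, x\in\Omega \ :\ u\equiv 0 \ \text{and}\ y\equiv 0 \ \text{on some open neighborhood of } x \,\}.
\]
By construction $\Omega_0$ is open, and it is nonempty since $\omega\subset\Omega_0$. Since $\Omega$ is connected, it suffices to show that $\Omega_0$ is also closed in $\Omega$: then $\Omega_0=\Omega$, which is precisely the assertion $u\equiv y\equiv 0$ in $\Omega$.

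So assume, for contradiction, that $\Omega_0$ is not closed in $\Omega$, i.e.\ there is $x^\ast\in\Omega\setminus\Omega_0$ lying in the closure of $\Omega_0$. Choose $R>0$ with $\overline{B(x^\ast,R)}\subset\Omega$ and a point $\tilde x\in\Omega_0$ with $|\tilde x-x^\ast|<R/4$, and put
\[
d := \sup\{\, r>0 \ :\ u\equiv 0 \ \text{and}\ y\equiv 0 \ \text{on } B(\tilde x,r)\,\}.
\]
Then $d>0$ because $\tilde x\in\Omega_0$, while $d\le|\tilde x-x^\ast|$ because $x^\ast\notin\Omega_0$ (if $d>|\tilde x-x^\ast|$, then $B(\tilde x,d)$ would be a vanishing neighborhood of $x^\ast$). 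Hence $\overline{B(\tilde x,d)}\subset B(x^\ast,R)\subset\Omega$, and $u\equiv y\equiv 0$ on $B(\tilde x,d)$, so on $\overline{B(\tilde x,d)}$ in the $L^2$ sense. By maximality of $d$ there exists $x_0\in\partial B(\tilde x,d)$ such that $u$ and $y$ do \emph{not} both vanish on any neighborhood of $x_0$: otherwise the compact sphere $\partial B(\tilde x,d)$ is covered by finitely many vanishing neighborhoods contained in $\Omega$, whose union together with $B(\tilde x,d)$ is an open set containing $\overline{B(\tilde x,d)}$ and hence containing $B(\tilde x,d+\delta)$ for some $\delta>0$, contradicting the definition of $d$.

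It remains to apply Proposition~\ref{carleman-prop} at $x_0$. Pick a neighborhood $V$ of $x_0$ small enough that $\overline V\subset\Omega$ and $\tilde x\notin\overline V$, and set $f(x):=-|x-\tilde x|^2$. Then $\nabla f(x)=-2(x-\tilde x)\neq 0$ on $\overline V$, and, because $f(x_0)=-d^2$,
\[
\{\,x\in V \ :\ f(x)\ge f(x_0)\,\} \ = \ \{\,x\in V \ :\ |x-\tilde x|\le d\,\} \ \subset\ \overline{B(\tilde x,d)},
\]
so $u=y=0$ on $\{x\in V:\ f(x)\ge f(x_0)\}$. Since $(u,y)|_V\in H^2(V)\times H^2(V)$ still solves $P(u,y)=g(u,y)$ on $V$, Proposition~\ref{carleman-prop} yields $u\equiv y\equiv 0$ on a neighborhood of $x_0$, contradicting the choice of $x_0$. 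Therefore $\Omega_0$ is closed in $\Omega$, hence $\Omega_0=\Omega$, which proves the theorem.

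The argument is essentially geometric bookkeeping; the one point requiring care is arranging the auxiliary function $f$ and the neighborhood $V$ so that \emph{both} hypotheses of Proposition~\ref{carleman-prop} hold at once --- the nondegeneracy $\nabla f\neq 0$ on $\overline V$ and the level-set inclusion $\{f\ge f(x_0)\}\subset\{u=y=0\}$ --- together with a clean extraction of a boundary point $x_0$ of the maximal vanishing ball at which $(u,y)$ fails to vanish. Everything else is the standard open/closed dichotomy on the connected set $\Omega$.
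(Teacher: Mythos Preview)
Your proof is correct and follows precisely the strategy the paper invokes: the paper's own proof merely sets $F=\supp u\cup\supp y$ and refers to the connectedness/propagation argument of Theorem~4.2 in \cite{Rousseau-Lebeau}, replacing their local unique continuation result by Proposition~\ref{carleman-prop}. What you have written is exactly that argument spelled out in full --- the open/closed dichotomy on the connected set $\Omega$, the maximal vanishing ball, the extraction of a boundary point $x_0$, and the application of the local result with the auxiliary function $f(x)=-|x-\tilde x|^2$ --- so there is no substantive difference in approach.
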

\begin{proof}
By setting $F=\supp u\cup \supp y$ and using Proposition \ref{carleman-prop} instead of Proposition 4.1 in the proof of the Theorem 4.2 in \cite{Rousseau-Lebeau} the result holds.
\end{proof}

\begin{Theorem}\label{kvstrongstability}
Assume that either \text{(A1)}, (A2)  (A3) or (A4) holds. Then, the $C_0-$semigroup $e^{t\mathcal{A}}$ is strongly stable in $\mathcal{H}$ in the sense that for all $U_0 \in \mathcal{H}$, the solution $U(t)=e^{t\mathcal{A}}U_0$ of  \eqref{Cauchy} satisfies
	$$
	\lim_{t\rightarrow \infty}\|e^{t\mathcal{A}}U_0\|_{\mathcal{H}}=0 .
	$$
\end{Theorem}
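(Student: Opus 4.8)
The plan is to invoke the Arendt--Batty criterion \cite{Arendt01}: since $\mathcal{H}$ is a Hilbert space and $(e^{t\mathcal{A}})_{t\ge 0}$ is a contraction semigroup, strong stability will follow once we show that $\mathcal{A}$ has no eigenvalue on $i\mathbb{R}$ and that $i\mathbb{R}\subset\rho(\mathcal{A})$ (so that $\sigma(\mathcal{A})\cap i\mathbb{R}=\emptyset$ is a fortiori countable and $\sigma_{p}(\mathcal{A}^{\ast})\cap i\mathbb{R}=\emptyset$). The inclusion $0\in\rho(\mathcal{A})$ is already contained in the proof of Proposition \ref{M-Dissipative}, where $\mathcal{A}$ was shown to be boundedly invertible, so it remains to treat $\lambda\in\mathbb{R}^{\ast}$.

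First I would rule out imaginary eigenvalues. Let $U=(u,v,y,z)^{\top}\in D(\mathcal{A})$ satisfy $\mathcal{A}U=i\lambda U$ with $\lambda\in\mathbb{R}^{\ast}$; the aim is $U=0$. Taking the real part of $(\mathcal{A}U,U)_{\mathcal{H}}=i\lambda\|U\|_{\mathcal{H}}^{2}$ and using \eqref{kvdissipation} gives $\int_{\Omega}b|\nabla v|^{2}\,dx=0$, hence $\nabla v=0$ a.e. on $\{b>0\}\supset\omega_{b}$; in particular $b\nabla v\equiv 0$ on $\Omega$, so $\divv(a\nabla u+b\nabla v)=a\Delta u$. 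Writing $\mathcal{A}U=i\lambda U$ componentwise (so $v=i\lambda u$, $z=i\lambda y$), the two remaining equations reduce, on all of $\Omega$, to
\[
a\,\Delta u=-\lambda^{2}u+c(x)\,i\lambda\,y,\qquad \Delta y=-\lambda^{2}y-c(x)\,i\lambda\,u,
\]
that is $P(u,y)=g(u,y)$ in the notation preceding Lemma \ref{carl}, and $\Delta u\in L^{2}(\Omega)$, so $(u,y)\in H^{2}_{\mathrm{loc}}(\Omega)^{2}$, which is enough for the interior unique continuation. It therefore suffices to exhibit a nonempty open set on which both $u$ and $y$ vanish and to apply Calder\'on's Theorem \ref{Calderon}. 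Under (A1)--(A3) (we may assume $\omega_{b}$ connected): $\nabla v=0$ makes $v$ constant on $\omega_{b}$, and $v\in H^{1}_{0}(\Omega)$ together with $\meas(\overline{\omega_{b}}\cap\Gamma)>0$ forces $v\equiv 0$, hence $u\equiv 0$, on $\omega_{b}$; inserting $u\equiv 0$ in the first equation on $\omega_{b}$ yields $c\,y\equiv 0$ there, so $y\equiv 0$ on the nonempty open set $\omega_{b}\cap\omega_{c}$ (on $\omega_{c}$ in case (A1)). Under (A4), where $\omega_{b}$ is interior and the Dirichlet trace is unavailable, one propagates constancy through the four equations to see that $u,v,y,z$ are all constant on $\omega_{b}$, and the $v$- and $y$-equations then force these constants to vanish, except at the resonant values $\lambda=\pm c_{0}$; for those one subtracts the constants, the shifted pair still solving $P=g$ on $\Omega$ and vanishing on an open set, so Calder\'on gives $u,y$ constant on $\Omega$ and the condition $u|_{\Gamma}=0$ closes the case. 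In all cases Theorem \ref{Calderon} gives $u\equiv y\equiv 0$ on the connected set $\Omega$, hence $v=i\lambda u\equiv 0$, $z=i\lambda y\equiv 0$, and $U=0$; thus $\sigma_{p}(\mathcal{A})\cap i\mathbb{R}=\emptyset$.

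Next I would show that $i\lambda I-\mathcal{A}$ is surjective for every $\lambda\in\mathbb{R}^{\ast}$. Given $F=(f_{1},f_{2},f_{3},f_{4})^{\top}\in\mathcal{H}$, eliminating $v=i\lambda u-f_{1}$ and $z=i\lambda y-f_{3}$ reduces $(i\lambda I-\mathcal{A})U=F$ to a variational problem on $H^{1}_{0}(\Omega)\times H^{1}_{0}(\Omega)$ for $(u,y)$ whose sesquilinear form is
\[
\mathcal{B}_{\lambda}\big((u,y),(\varphi,\psi)\big)=\int_{\Omega}(a+i\lambda b)\nabla u\cdot\overline{\nabla\varphi}\,dx+\int_{\Omega}\nabla y\cdot\overline{\nabla\psi}\,dx-\lambda^{2}\!\int_{\Omega}(u\overline{\varphi}+y\overline{\psi})\,dx+i\lambda\!\int_{\Omega}c\,(y\overline{\varphi}-u\overline{\psi})\,dx.
\]
Writing $\mathcal{B}_{\lambda}=\mathcal{B}_{0}+\mathcal{K}_{\lambda}$, where $\mathcal{B}_{0}$ keeps the first two integrals together with $\int_{\Omega}(u\overline{\varphi}+y\overline{\psi})\,dx$, the form $\mathcal{B}_{0}$ is coercive on $(H^{1}_{0}(\Omega))^{2}$ (because $a>0$ and $\Re(i\lambda b|\nabla u|^{2})=0$), while $\mathcal{K}_{\lambda}$ contains only $L^{2}$-pairings and hence, by the Rellich--Kondrachov theorem, is represented by a compact operator. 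The Fredholm alternative then reduces solvability to uniqueness for the homogeneous problem, i.e. to $i\lambda\notin\sigma_{p}(\mathcal{A})$, already proved. One finally checks that the resulting $U=(u,i\lambda u-f_{1},y,i\lambda y-f_{3})^{\top}$ belongs to $D(\mathcal{A})$ — in particular $\divv(a\nabla u+b\nabla v)\in L^{2}(\Omega)$, the a priori only $H^{-1}$ term $\divv(b\nabla f_{1})$ (coming from $f_{1}\in H^{1}_{0}(\Omega)$ only) cancelling out. Hence $i\lambda I-\mathcal{A}$ is bijective, so $i\mathbb{R}\subset\rho(\mathcal{A})$, and the Arendt--Batty theorem yields the claimed strong stability.

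The main difficulty, I expect, is twofold. On the analytic side it is the interior case (A4): without a boundary to register the Dirichlet condition, the vanishing of an imaginary eigenfunction on $\omega_{b}$ has to be squeezed out of the coupling itself, and the resonances $\lambda=\pm c_{0}$ require the extra shift-and-continue step. On the functional-analytic side it is the surjectivity: this is exactly where the absence of compactness of the resolvent is felt, since $\divv(a\nabla u+b\nabla v)\in L^{2}$ does not make $D(\mathcal{A})\hookrightarrow\mathcal{H}$ compact, so surjectivity on $i\mathbb{R}$ cannot be read off from a compact-resolvent argument for $\mathcal{A}$ itself and must be recovered from the lower-order splitting of $\mathcal{B}_{\lambda}$ above — the point being that only zeroth-order terms destroy coercivity, and those are precisely the ones Rellich turns into a compact perturbation.
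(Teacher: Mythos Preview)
Your proposal is correct and follows the paper's overall strategy: Arendt--Batty reduces everything to $\ker(i\lambda I-\mathcal{A})=\{0\}$ and $R(i\lambda I-\mathcal{A})=\mathcal{H}$ for real $\lambda$; the surjectivity argument via a coercive-plus-compact splitting of the sesquilinear form and Fredholm alternative is exactly what the paper does in Lemma~\ref{surj}.

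The one place where you genuinely diverge is case~(A4). The paper does not argue via constancy and resonances: instead it \emph{differentiates}. Since $c(x)=c_{0}$ is constant and $\nabla u=0$ on $\omega_{b}$, differentiating the first reduced equation gives $\partial_{j}y=0$ on $\omega_{b}$ for every $j$; the pair $(\partial_{j}u,\partial_{j}y)$ then solves the same coupled elliptic system $P=g$ and vanishes on $\omega_{b}$, so Calder\'on (Theorem~\ref{Calderon}) yields $\partial_{j}u=\partial_{j}y=0$ on all of~$\Omega$, and the boundary condition $u|_{\Gamma}=y|_{\Gamma}=0$ finishes. This avoids your resonance case-split at $\lambda=\pm c_{0}$ entirely, at the cost of applying the unique continuation $N$ times rather than once. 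Your route also works (the shifted pair does satisfy $P=g$ at the resonant frequencies, as you implicitly use), so this is a matter of taste; the paper's version is a bit cleaner because it never has to name the constants or check the algebraic cancellation. One small caution in your (A1)--(A3) argument: ``we may assume $\omega_{b}$ connected'' is not quite free---what you actually use is that some connected component of $\omega_{b}$ both touches $\Gamma$ and meets $\omega_{c}$, which follows from the stated hypotheses but should be said.
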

\noindent For the proof of Theorem \ref{kvstrongstability}, the resolvent of $\mathcal{A}$ is not compact. Then,  in order to prove this Theorem we will use a general criteria Arendt-Batty. We need to prove that the operator $\mathcal{A}$ has no pure imaginary eigenvalues and $\sigma\left(\mathcal{A}\right)\cap i\mathbb{R}$ contains only a countable number of continuous spectrum of $\mathcal{A}$. The argument for Theorem \ref{kvstrongstability} relies on the subsequent lemmas.  
\begin{Lemma}\label{ker1}
Assume that (A1) holds. Then, we have
$$
\ker\left(i\la I-\mathcal{A}\right)=\{0\},\quad \forall \la\in \R.
$$
\end{Lemma}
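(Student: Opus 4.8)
The plan is to take $U=(u,v,y,z)^{\top}\in D(\mathcal A)$ with $\mathcal A U=i\la U$ and show $U=0$. The case $\la=0$ is immediate: $\ker\mathcal A=\{0\}$ was already established in the proof of Proposition~\ref{M-Dissipative}. So from now on assume $\la\neq 0$. Writing the eigenvalue equation componentwise gives $v=i\la u$, $z=i\la y$, together with
$$\divv(a\nabla u+b(x)\nabla v)-c(x)z=i\la v,\qquad \Delta y+c(x)v=i\la z .$$

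First I would exploit dissipativity: by \eqref{kvdissipation}, $0=\Re\big(i\la\|U\|_{\mathcal H}^2\big)=\Re(\mathcal A U,U)_{\mathcal H}=-\int_{\Omega}b(x)|\nabla v|^2\,dx$. Since $b\ge b_0>0$ on $\omega_b$ (and $b\ge0$ everywhere), this forces $\nabla v=0$ a.e.\ on $\{b>0\}\supseteq\omega_b$, hence $b(x)\nabla v\equiv0$ a.e.\ in $\Omega$, so in particular $\divv\big(b(x)\nabla v\big)=0$ in $\mathcal D'(\Omega)$. On $\omega_b$ the function $v$ is then locally constant; as $v\in H_0^1(\Omega)$ and $\meas(\overline{\omega_b}\cap\Gamma)>0$, its trace vanishes on a subset of $\Gamma$ of positive surface measure, which pins this constant to $0$, i.e.\ $v\equiv 0$ on $\omega_b$ (here one uses that $\omega_b$, or the component of it touching $\Gamma$, is connected). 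Since $\la\neq0$, this gives $u\equiv 0$ on $\omega_b$.

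Next I would reduce the system to the Calderón framework. Substituting $v=i\la u$, $z=i\la y$ and using $\divv(b(x)\nabla v)=0$, the $b$-term drops out and the two remaining equations become the clean coupled elliptic system
$$a\Delta u=-\la^2 u+c(x)i\la y,\qquad \Delta y=-\la^2 y-c(x)i\la u\quad\text{in }\Omega,$$
that is, $P(u,y)=g(u,y)$ for the operators $P$ and $g$ introduced above (and $(u,y)\in H^2(\Omega)\times H^2(\Omega)$ by elliptic regularity, since $a\Delta u=\divv(a\nabla u+b\nabla v)\in L^2$). Because $u\equiv0$ on the open set $\omega_b$, the first equation yields $c(x)i\la y=0$ on $\omega_b$; since $\omega_c\subset\omega_b$ with $c\ge c_0>0$ on $\omega_c$ and $\la\neq0$, we get $y\equiv0$ on $\omega_c$. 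Thus $u$ and $y$ both vanish on the nonempty open set $\omega_c$.

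Finally, applying the Calderón-type unique continuation theorem (Theorem~\ref{Calderon}) to $(u,y)$ on $\Omega$ (assumed connected; otherwise argue component by component) gives $u\equiv y\equiv 0$ in $\Omega$, whence $v=i\la u\equiv 0$ and $z=i\la y\equiv0$, i.e.\ $U=0$. The delicate steps are the reduction in the third paragraph — verifying that $b(x)\nabla v$ vanishes identically so that the variable-coefficient Kelvin–Voigt term genuinely disappears and the pair $(u,y)$ solves exactly $P(u,y)=g(u,y)$ — and the trace/connectedness argument upgrading $\nabla v=0$ on $\omega_b$ to $v\equiv0$ on $\omega_b$; the unique continuation itself is quoted from Theorem~\ref{Calderon}.
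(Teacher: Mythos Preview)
Your argument is correct but the final step takes a different route from the paper. After establishing $v=0$ (hence $u=0$) on $\omega_b$ and $z=0$ (hence $y=0$) on $\omega_c$, the paper \emph{decouples} the system: using that $c$ is supported in $\omega_c$ and $z=0$ there, the term $c(x)z$ vanishes identically in $\Omega$, so the first equation becomes the scalar Helmholtz equation $\la^2 u+a\Delta u=0$ with $u=0$ on $\omega_b$, and scalar unique continuation gives $u\equiv0$; similarly $c(x)v\equiv0$ reduces the second equation to $\la^2 y+\Delta y=0$ with $y=0$ on $\omega_c$, and another scalar unique continuation finishes. You instead keep the coupled system $P(u,y)=g(u,y)$ and invoke the coupled Calder\'on theorem (Theorem~\ref{Calderon}) on the set $\omega_c$ where both $u$ and $y$ vanish.

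Both approaches are valid. The paper's decoupling is slightly more elementary (only classical scalar unique continuation is needed) but tacitly uses that $\operatorname{supp} c\subset\overline{\omega_c}$ to make the coupling terms disappear globally. Your route is exactly the method the paper employs for cases (A2)--(A4), so it is more uniform and does not need any support hypothesis on $c$ beyond $c\in L^\infty(\Omega)$; the price is appealing to the system Carleman estimate rather than the scalar one. Your justification that $b(x)\nabla v\equiv0$ (so the Kelvin--Voigt term drops and $u\in H^2_{\rm loc}$) and your connectedness/trace remark for upgrading $\nabla v=0$ to $v=0$ on $\omega_b$ are in fact more carefully spelled out than in the paper.
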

\begin{proof}
	From Proposition  \ref{M-Dissipative}, $0\in\rho(\mathcal{A})$. We still need to show the result for $\lambda\in\R^*$. Suppose that there exists a real number $\lambda\neq0$ and  $U=(u,v,y,z)^{\top}\in D(\mathcal{A})$ such that 
	\begin{equation}\label{kvSS1}
	\mathcal{A}U=i\lambda U.
	\end{equation}
From \eqref{kvdissipation} and \eqref{kvSS1}, we have 
	\begin{equation}\label{kvSS2}
	0=\Re\left(i\lambda\|U\|^2_{\mathcal{H}}\right)=\Re(\left<\mathcal{A}U,U\right>_{\mathcal{H}})=-\int_{\Omega}b(x)|\nabla v|^2dx.
	\end{equation}
Using the condition  \eqref{b} and Poincar\'e's inequality implies that 
	\begin{equation}\label{kvSS3}
	b(x)\nabla v=0\text{ in }\Omega \ \text{ and }\  v= 0\text{ in } \omega_{b}.
	\end{equation}
Detailing \eqref{kvSS1} and using \eqref{kvSS3}, we get the following system 
	\begin{eqnarray}
	v&=&i\lambda u\hspace{1cm}\text{in}\quad  \Omega,\label{kvss4}\\ \noalign{\medskip}
	a\Delta u-c(x) z&=&i\lambda v\hspace{1cm}\text{in }\quad \Omega,\label{kvss5}\\ \noalign{\medskip}
	z&=&i\lambda y\hspace{1cm} \text{in}\quad\Omega,\label{kvss6}\\ \noalign{\medskip}
	\Delta y+c(x)  v&=&i\la z\hspace{1cm}\text{in}\quad \Omega\label{kvss7}.
	\end{eqnarray}
From \eqref{kvSS3}, \eqref{kvss4} and \eqref{kvss5} with the assumption (A1), we get
\begin{equation}\label{kvSS4}
z=0\,\,\text{on}\,\,\,\omega_c.
\end{equation}
Inserting \eqref{kvss4} into \eqref{kvss5}, and using \eqref{kvSS3} we get
	\begin{equation}\left\{
\begin{array}{lll}
\la ^2u+a\Delta u=0 & \text{in}& \Omega\\
u=0 &\text{in} &\omega_b
\end{array}\right.
\end{equation}
Using the unique continuation theorem, we get
\begin{equation}
u=0\quad \text{on}\quad \Omega.
\end{equation}
Now, substituting \eqref{kvss6} in \eqref{kvss7}, using \eqref{kvSS4} and the definition of $c(x)$, we get
\begin{equation}\left\{
\begin{array}{lll}
\la ^2y +\Delta y=0 & \text{in}&\Omega,\\
y=0 &\text{in} &\omega_c.
\end{array}\right.
\end{equation}
Using the unique continuation theorem, we get
\begin{equation}
y=0\quad \text{on}\quad \Omega.
\end{equation}
Therefore, $U=0$. thus the proof is complete.
\end{proof}

\begin{Lemma}\label{ker2}
Assume that either (A2), (A3) or (A4) holds. Then, we have
$$
\ker\left(i\la I-\mathcal{A}\right)=\{0\},\quad \forall \la\in \R.
$$
\end{Lemma}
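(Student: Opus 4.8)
The plan is to follow the Arendt--Batty scheme exactly as in the proof of Lemma~\ref{ker1}, only replacing the final scalar unique continuation steps by a single application of the Calder\'on-type theorem for the coupled operator $P$, namely Theorem~\ref{Calderon}. Since $0\in\rho(\mathcal{A})$ by Proposition~\ref{M-Dissipative}, it suffices to consider $\lambda\in\R^\ast$ and $U=(u,v,y,z)^\top\in D(\mathcal{A})$ with $\mathcal{A}U=i\lambda U$. The dissipation identity \eqref{kvdissipation} gives $\int_\Omega b(x)|\nabla v|^2\,dx=0$, so by \eqref{b} we have $b(x)\nabla v=0$ in $\Omega$ and $\nabla v=0$ in $\omega_b$. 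Detailing the resolvent equation as in the proof of Lemma~\ref{ker1} (and using $b\nabla v=0$, so that $\divv(a\nabla u+b\nabla v)=a\Delta u$) yields $v=i\lambda u$, $z=i\lambda y$, together with $a\Delta u=-\lambda^2 u+c(x)i\lambda y$ and $\Delta y=-\lambda^2 y-c(x)i\lambda u$; equivalently, $(u,y)\in H^2(\Omega)\times H^2(\Omega)$ solves $P(u,y)=g(u,y)$ in $\Omega$. Hence the whole proof reduces to exhibiting a nonempty open subset of $\Omega$ on which $u$ and $y$ both vanish: Theorem~\ref{Calderon} then forces $u\equiv y\equiv 0$ in $\Omega$, and therefore $v=z=0$, i.e.\ $U=0$.

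Under (A2) or (A3) one has $\meas(\overline{\omega_b}\cap\Gamma)>0$, so the function $v$ --- which satisfies $\nabla v=0$ in $\omega_b$ and $v=0$ on $\Gamma$ --- vanishes identically in $\omega_b$; as $\lambda\neq0$ this gives $u\equiv0$ in $\omega_b$, hence $\Delta u\equiv0$ in $\omega_b$, and the identity $a\Delta u=-\lambda^2u+c(x)i\lambda y$ reduces to $c(x)\,y=0$ in $\omega_b$. On the nonempty open set $\omega=\omega_b\cap\omega_c$ we have $c(x)\geq c_0>0$ by \eqref{c}, so $y\equiv0$ on $\omega$, while $u\equiv0$ on $\omega$ as well; thus $u=y=0$ on $\omega$ and Theorem~\ref{Calderon} applies.

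Under (A4) the coupling coefficient is the constant $c_0\in\R^\ast$ and $\omega_b$ need not touch $\Gamma$, so the boundary-trace argument above is unavailable. Instead, $\nabla v=0$ in $\omega_b$ together with $v=i\lambda u$ shows that $u\equiv\alpha$ is constant on each component of $\omega_b$; then $\Delta u\equiv0$ there, so $y$ and $z$ are constant on that component too, and substituting these constants into the four scalar equations leaves a single algebraic relation of the form $(\lambda^2-c_0^2)\alpha=0$. If $\lambda^2\neq c_0^2$, then $\alpha=0$, hence $u\equiv v\equiv 0$ and $y\equiv z\equiv 0$ in $\omega_b$, and Theorem~\ref{Calderon} concludes as before. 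If $\lambda^2= c_0^2$, eliminating $y$ between the two equations $a\Delta u=-\lambda^2u+c_0 i\lambda y$ and $\Delta y=-\lambda^2 y-c_0 i\lambda u$ shows that $w:=\Delta u\in H^2(\Omega)$ solves $a\Delta w+\lambda^2(1+a)w=0$ in $\Omega$ while $w\equiv 0$ in the nonempty open set $\omega_b$; classical unique continuation for this Helmholtz equation (which follows from the Carleman estimate of Lemma~\ref{carl}) gives $w\equiv0$ in $\Omega$, i.e.\ $\Delta u\equiv0$, and since $u\in H_0^1(\Omega)$ this forces $u\equiv0$; then $a\Delta u=-\lambda^2u+c_0 i\lambda y$ gives $y\equiv0$, so again $U=0$.

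The main obstacle is precisely producing a nonempty open set on which both $u$ and $y$ vanish: the velocity coupling prevents decoupling the two wave equations into scalar Helmholtz problems (in contrast with what was partially possible under (A1)), and under (A4) the interior location of $\omega_b$ removes the easy route to $v\equiv0$ in $\omega_b$, so one must exploit the rigidity of the constant-coefficient coupled system, with the resonant frequencies $\lambda=\pm c_0$ forming the delicate sub-case. Once such a set is available, everything follows from Theorem~\ref{Calderon} as in Lemma~\ref{ker1}.
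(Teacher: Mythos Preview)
Your treatment of (A2) and (A3) is essentially identical to the paper's: from $\int_\Omega b|\nabla v|^2=0$ and $\meas(\overline{\omega_b}\cap\Gamma)>0$ you obtain $v=0$ on (the relevant component of) $\omega_b$, hence $u=0$ there, and then $c(x)y=0$ on $\omega$ gives $u=y=0$ on $\omega$, so Theorem~\ref{Calderon} concludes. A minor imprecision: under (A2) only $\meas(\overline{\omega}\cap\Gamma)>0$ is assumed, so strictly speaking you only get $v=0$ on the component of $\omega_b$ containing $\omega$; this is all you need, but the phrase ``vanishes identically in $\omega_b$'' overstates it unless connectedness is assumed.

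For (A4) you take a genuinely different route. The paper avoids any case distinction: it observes that $\nabla v=0$ in $\omega_b$ forces $\partial_j u=0$ there, and then, since $c(x)\equiv c_0$, also $\partial_j y=0$ in $\omega_b$; it then applies Theorem~\ref{Calderon} directly to the differentiated pair $(\partial_j u,\partial_j y)$, which satisfies the same coupled system $P=g$, and finishes via the Dirichlet condition. Your argument instead extracts the algebraic constraint $(\lambda^2-c_0^2)\alpha=0$ from the constants on $\omega_b$ and splits into non-resonant and resonant frequencies, eliminating $y$ in the latter case to obtain a scalar Helmholtz equation $a\Delta w+\lambda^2(1+a)w=0$ for $w=\Delta u$ with $w=0$ on $\omega_b$. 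This is correct (note $w=(-\lambda^2u+i\lambda c_0 y)/a\in H^2(\Omega)$ directly from $u,y\in H^2(\Omega)$, so scalar unique continuation applies). The paper's approach is shorter and uniform in $\lambda$, but tacitly uses one more degree of regularity (it needs $\partial_j u,\partial_j y\in H^2(\Omega)$ to invoke Theorem~\ref{Calderon}, i.e.\ $u,y\in H^3(\Omega)$, obtained by bootstrapping). Your approach trades that regularity step for an explicit case analysis; both are valid.
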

 \begin{proof}
	From Proposition  \ref{M-Dissipative}, $0\in\rho(\mathcal{A})$. We still need to show the result for $\lambda\in\R^*$. Suppose that there exists a real number $\lambda\neq0$ and  $U=(u,v,y,z)^{\top}\in D(\mathcal{A})$ such that 
	\begin{equation}\label{kvSS1-ker2}
	\mathcal{A}U=i\lambda U.
	\end{equation}
	From \eqref{kvdissipation} and \eqref{kvSS1-ker2}, we have 
	\begin{equation}\label{kvSS2-ker2}
	0=\Re\left(i\lambda\|U\|^2_{\mathcal{H}}\right)=\Re(\left<\mathcal{A}U,U\right>_{\mathcal{H}})=-\int_{\Omega}b(x)|\nabla v|^2dx.
	\end{equation}
Condition  \eqref{b} implies that 
	\begin{equation}\label{kvSS3-ker2}
	\nabla v=0\quad\text{in}\quad \omega_{b}.
	\end{equation}
Detailing \eqref{kvSS1-ker2} and using \eqref{kvSS3-ker2}, we get the following system 
	\begin{eqnarray}
	v&=&i\lambda u\hspace{1cm}\text{in}\quad  \Omega,\label{kvss4-ker2}\\ \noalign{\medskip}
	a\Delta u-c(x) z&=&i\lambda v\hspace{1cm}\text{in }\quad \Omega,\label{kvss5-ker2}\\ \noalign{\medskip}
	z&=&i\lambda y\hspace{1cm} \text{in}\quad\Omega,\label{kvss6-ker2}\\ \noalign{\medskip}
	\Delta y+c(x)  v&=&i\la z\hspace{1cm}\text{in}\quad \Omega\label{kvss7-ker2}.
	\end{eqnarray}
	Now, we will distinguish between the following three cases:\\
	\textbf{Case 1.} If (A2) holds. Then, by using Poincar\'e's inequality we get
	\begin{equation}\label{KKV}
v=0\,\,\text{on}\,\,\,\omega.
\end{equation}
From \eqref{kvss5-ker2}, \eqref{KKV} and using \eqref{kvss6-ker2} we get
	\begin{equation}\label{kvSS5-ker2}
y=0\,\,\text{on}\,\,\,\omega.
\end{equation}
Inserting \eqref{kvss4-ker2} and \eqref{kvss6-ker2} into \eqref{kvss5-ker2} and \eqref{kvss7-ker2} respectively, we get
	\begin{equation}\left\{
\begin{array}{lll}
\la ^2 u+a\Delta u-i\la c(x)y=0 &\text{in}& \Omega,\\[0.1in]
\la ^2 y+\Delta y+i\la c(x)u=0 & \text{in}& \Omega,\\[0.1in]
u=y=0 &\text{in} &\omega.
\end{array}\right.
\end{equation}
Then, using Theorem \ref{Calderon} we get that $u=y=0$ in $\Omega$. Thus, we deduce that $U=0$ in $\Omega$ and we reached our desired result.\\
\textbf{Case 2.} If (A3) holds. 	Then, by using Poincar\'e's inequality we get
	\begin{equation}\label{kvSS4-ker2}
v=0\,\,\text{on}\,\,\,\omega_b.
\end{equation}
Proceeding in the same way as in Case 1., we get
	\begin{equation}\left\{
\begin{array}{lll}
\la ^2 u+a\Delta u-i\la c(x)y=0 & \text{in}& \Omega,\\[0.1in]
\la ^2 y+\Delta y+i\la c(x)u=0 & \text{in}& \Omega,\\[0.1in]
u=y=0 &\text{in} &\omega.
\end{array}\right.
\end{equation}
Then, using Theorem \ref{Calderon} we get that $u=y=0$ in $\Omega$. Thus, we deduce that $U=0$ in $\Omega$ and we reached our desired result.\\
\textbf{Case 3.} Assume that ${\rm(A4)}$ holds. By differentiating \eqref{kvss5-ker2} and using the fact that $c(x)=c_0$ in $\Omega$ , we obtain 
\begin{equation*}
\partial_jy=0\quad \text{in}\quad \omega_b,\quad \forall j=1\,\cdots,N. 
\end{equation*}
Then, for all $j=1,\cdots ,N$, we have the following system 
 \begin{equation}\left\{
\begin{array}{lll}
\la ^2 \partial_ju+a\Delta \partial_ju-i\la c_0\partial_jy=0 &\text{in}& \Omega,\\[0.1in]
\la ^2 \partial_jy+\Delta \partial_jy+i\la c_0\partial_ju=0 & \text{in}& \Omega,\\[0.1in]
\partial_ju=\partial_jy=0 &\text{in} &\omega.
\end{array}\right.
\end{equation}
By applying Theorem \ref{Calderon}, we obtain 
$$
\partial_ju=\partial_jy=0\quad \text{in}\quad \Omega,\quad \forall j=1\,\cdots,N.
$$
Using the fact that $u=y=0$ on $\Gamma$, we get $u=y=0$ in $\Omega$. Consequently, $U=0$ in $\Omega$.

\end{proof}

\begin{Lemma}\label{surj}
Assume that either (A1), (A2), (A3) or (A4) holds. Then, we have
\begin{equation*}
R(i\la I-\AA)=\HH,\quad \text{for all}\quad \la\in\R.
\end{equation*}
\end{Lemma}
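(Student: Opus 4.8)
The plan is to argue by contradiction using the dissipativity established in Proposition \ref{M-Dissipative} together with the kernel triviality already obtained in Lemmas \ref{ker1} and \ref{ker2}. Since $0 \in \rho(\mathcal{A})$ (from the m-dissipativity proof) we only need to treat $\lambda \in \mathbb{R}^\ast$. First I would fix $\lambda \neq 0$ and $F = (f_1, f_2, f_3, f_4)^\top \in \mathcal{H}$, and seek $U = (u,v,y,z)^\top \in D(\mathcal{A})$ solving $(i\lambda I - \mathcal{A})U = F$. Writing this out componentwise and eliminating $v = i\lambda u - f_1$ and $z = i\lambda y - f_3$, the system reduces to the two coupled elliptic equations
\begin{equation*}
\begin{array}{l}
\lambda^2 u + \divv(a\nabla u + b(x)\nabla v) - c(x) z = -(f_2 + i\lambda f_1),\\[0.1in]
\lambda^2 y + \Delta y + c(x) v = -(f_4 + i\lambda f_3),
\end{array}
\end{equation*}
with $u, y \in H_0^1(\Omega)$, after substituting back the expressions for $v$ and $z$. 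I would recast this as a variational problem $\mathcal{B}((u,y),(\varphi,\psi)) = \mathcal{L}(\varphi,\psi)$ on $H_0^1(\Omega)\times H_0^1(\Omega)$, where $\mathcal{L}$ is manifestly antilinear and continuous (all the $f_i$ enter boundedly, using $b, c \in L^\infty$), and $\mathcal{B}$ is sesquilinear and continuous but \emph{not} coercive because of the $\lambda^2$ terms of the wrong sign.

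To handle the loss of coercivity I would use the standard Fredholm alternative / compactness trick. Decompose $\mathcal{B} = \mathcal{B}_1 - \lambda^2 \mathcal{B}_0$ where $\mathcal{B}_1$ is the coercive principal part (essentially the inner product form plus the $b$-damping contribution, which is nonnegative) and $\mathcal{B}_0$ is the $L^2$ pairing. By Lax–Milgram, $\mathcal{B}_1$ induces an isomorphism, and since the embedding $H_0^1(\Omega) \hookrightarrow L^2(\Omega)$ is compact (Rellich), the operator associated with $\lambda^2 \mathcal{B}_0$ is compact relative to it. Hence the problem $(i\lambda I - \mathcal{A})U = F$ is of the form $(I - K)\tilde U = \tilde F$ with $K$ compact, and the Fredholm alternative applies: surjectivity of $i\lambda I - \mathcal{A}$ is equivalent to injectivity. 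But injectivity is exactly $\ker(i\lambda I - \mathcal{A}) = \{0\}$, which is precisely what Lemmas \ref{ker1} and \ref{ker2} give under each of the hypotheses (A1)–(A4). Once $U \in H_0^1(\Omega)\times\big(H^2(\Omega)\cap H_0^1(\Omega)\big)$ is produced, elliptic regularity gives $\divv(a\nabla u + b(x)\nabla v) \in L^2(\Omega)$ and $v, z \in H_0^1(\Omega)$, so $U \in D(\mathcal{A})$.

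The main obstacle is organizing the Fredholm argument cleanly given the nonstandard structure of the first equation: the term $\divv(a\nabla u + b(x)\nabla v)$ with $v = i\lambda u - f_1$ mixes $u$ and the data inside the divergence, so the principal part seen by $u$ is $\divv((a + i\lambda b(x))\nabla u)$, whose associated sesquilinear form is complex but still satisfies a Gårding inequality (its real part controls $\|\nabla u\|^2$ since $a > 0$ and $b \geq 0$). I would make sure to state that carefully so that $\mathcal{B}_1$ is coercive in the appropriate complex sense. The remaining steps — verifying continuity of all forms, invoking Rellich, and citing the Fredholm alternative together with the already-proven kernel triviality — are routine. I would close by noting that, combined with the dissipativity of $\mathcal{A}$ and the absence of eigenvalues on $i\mathbb{R}$, this puts us in position to apply the Arendt–Batty criterion in the proof of Theorem \ref{kvstrongstability}.
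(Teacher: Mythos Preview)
Your proposal is correct and follows essentially the same route as the paper: recast the resolvent equation variationally, split the sesquilinear form into a coercive principal part (the gradient terms plus the $i\lambda b$-contribution) and a compact $L^2$-type perturbation, invoke Lax--Milgram and Rellich to obtain a Fredholm operator of index zero, and then use Lemmas~\ref{ker1}--\ref{ker2} to get injectivity and hence bijectivity. The only cosmetic difference is that the paper explicitly places the coupling terms $i\lambda\int_\Omega c(x)(y\bar\varphi - u\bar\psi)\,dx$ into the compact part $a_2$ alongside the $-\lambda^2$ terms, whereas your decomposition $\mathcal{B}=\mathcal{B}_1-\lambda^2\mathcal{B}_0$ as written leaves their placement implicit; be sure to include them in the compact perturbation so that $\mathcal{B}_1$ is genuinely coercive.
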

\begin{proof}
From Proposition \ref{M-Dissipative}, we have $0\in \rho(\mathcal{A})$. We still need to show the result for $\la\in \R^{\ast}$. Set
$F = (f_1, f_2, f_3, f_4)\in\HH $, we look for $U = (u, v, y, z)\in D(\AA)$ solution of
\begin{equation}\label{eq-2.30}
(i\la I-\AA)U=F.
\end{equation}
Equivalently, we have 
\begin{eqnarray}
v&=&i\la u-f_1,\label{surj1}\\
i\la v-\divv(a\nabla u+b(x)\nabla v)+c(x) z&=&f_2,\label{surj2}\\
z&=&i\la y-f_3,\label{surj3}\\
i\la z-\Delta y-c(x)v&=&f_4.\label{surj4}
\end{eqnarray}
Let $\left(\varphi,\psi\right)\in H_{0}^1(\Omega)\times H_0^1(\Omega)$, multiplying Equations \eqref{surj2} and \eqref{surj4} by $\bar{\varphi}$ and $\bar{\psi}$ respectively and integrating over $\Omega$, we obtain 
\begin{eqnarray}
\int_{\Omega}i\la v\bar{\varphi}dx+\int_{\Omega}a\nabla u\nabla\bar{\varphi}dx+\int_{\Omega}b(x)\nabla v\nabla\bar{\varphi}dx+\int_{\Omega}c(x)z\bar{\varphi}dx=\int_{\Omega}f_2\bar{\varphi}dx,\label{surj5}\\
\int_{\Omega}i\la z\bar{\psi}dx+\int_{\Omega}\nabla y\nabla\bar{\psi}dx-\int_{\Omega}c(x)v\bar{\psi}dx=\int_{\Omega}f_4\bar{\psi}dx.\label{surj6}
\end{eqnarray}
Substituting $v$ and $z$ in \eqref{surj1} and \eqref{surj3} into \eqref{surj5} and \eqref{surj6} and taking the sum, we obtain 
\begin{equation}\label{aL}
a\left((u,y),(\varphi,\psi)\right)={\rm L}(\varphi,\psi),\qquad \forall (\varphi,\psi)\in H_0^1(\Omega)\times H_0^1(\Omega),
\end{equation}
where 
\begin{equation*}
a\left((u,y),(\varphi,\psi)\right)=a_1\left((u,y),(\varphi,\psi)\right)+a_2\left((u,y),(\varphi,\psi)\right)
\end{equation*}
with 
\begin{equation*}
\left\{\begin{array}{l}
a_1\left((u,y),(\varphi,\psi)\right)=\displaystyle{\int_{\Omega}\left(a\nabla u\nabla\bar{\varphi}+\nabla y\nabla\bar{\psi}\right)dx+i\la\int_{\Omega}b(x)\nabla u \nabla \bar{\varphi}dx },\\[0.1in]
a_2\left((u,y),(\varphi,\psi)\right)=\displaystyle{-\la^2\int_{\Omega}\left(u\bar{\varphi}+y\bar{\psi}\right)dx+i\la\,\int_{\Omega} c(x)\left(y\bar{\varphi}-u\bar{\psi}\right)dx},
\end{array}
\right.
\end{equation*}
and 
\begin{equation*}
\begin{array}{l}
{\rm L}(\varphi,\psi)=\displaystyle{\int_{\Omega}\left(f_{2}+c(x)f_{3}+i\la f_{1}\right)\bar{\varphi}dx+\int_{\Omega}\left(f_{4}-c(x)f_{1}+i\la f_{3}\right)\bar{\psi} dx+\int_{\Omega}b(x)\nabla f_1\bar{\varphi}_xdx.}
\end{array}
\end{equation*}
Let $V=H_0^1(\Omega)\times H_0^1(\Omega)$ and $V'=H^{-1}(\Omega)\times H^{-1}(\Omega)$ the dual space of $V$. Let us consider the following operators,
$$
\left\{\begin{array}{llll}
{\rm A}:&V&\rightarrow& V'\\
&(u,y)&\rightarrow &{\rm A}(u,y)
\end{array}
\right.
\left\{\begin{array}{llll}
{\rm A_1}:&V&\rightarrow& V'\\
&(u,y)&\rightarrow &{\rm A_1}(u,y)
\end{array}
\right.
\left\{\begin{array}{llll}
{\rm A_2}:&V&\rightarrow& V'\\
&(u,y)&\rightarrow &{\rm A_2}(u,y)
\end{array}
\right.
$$
such that
\begin{equation}\label{AA1A2aa1a2}
\left\{\begin{array}{ll}
\displaystyle{\left({\rm A}(u,y)\right)(\varphi,\psi)=a\left((u,y),(\varphi,\psi)\right)},&\forall (\varphi,\psi)\in H_0^1(\Omega)\times H_0^1(\Omega),\\[0.1in]
\displaystyle{\left({\rm A_1}(u,y)\right)(\varphi,\psi)=a_1\left((u,y),(\varphi,\psi)\right)},&\forall (\varphi,\psi)\in H_0^1(\Omega)\times H_0^1(\Omega),\\[0.1in]
\displaystyle{\left({\rm A_2}(u,y)\right)(\varphi,\psi)=a_2\left((u,y),(\varphi,\psi)\right)},&\forall (\varphi,\psi)\in H_0^1(\Omega)\times H_0^1(\Omega).
\end{array}
\right.
\end{equation}
Our goal is to prove that ${\rm A}$ is an isomorphism operator. For this aim, we divide the proof into three steps.\\
{\textbf{Step 1.}} In this step, we prove that the operator ${\rm A_1}$ is an isomorphism operator. For this aim, following the second equation  of \eqref{AA1A2aa1a2} we can easily verify that $a_1$ is a bilinear continuous coercive form on $H_0^1(\Omega)\times H_0^1(\Omega)$. Then, by Lax-Milgram Lemma, the operator ${\rm A_1}$ is an isomorphism.\\

\noindent {\textbf{Step 2.}} In this step, we prove that the operator ${\rm A_2}$ is compact. According to the third equation of \eqref{AA1A2aa1a2}, we have 
$$
\abs{a_2\left((u,y),(\varphi,\psi)\right)}\leq C\|(u,y)\|_{L^2(\Omega)}\|(\varphi,\psi)\|_{L^2(\Omega)}.
$$ 
Finally, using the compactness embedding from $H_0^1(\Omega)$ to $L^2(\Omega)$ and the continuous embedding from $L^2(\Omega)$ into $H^{-1}(\Omega)$ we deduce that $A_2$ is compact.\\

\noindent From steps 1 and 2, we get that the operator ${\rm A=A_1+A_2}$ is a Fredholm operator of index zero. Consequently, by Fredholm alternative, to prove that operator ${\rm A}$ is an isomorphism it is enough to prove that ${\rm A}$ is injective, i.e. $\ker\left\{{\rm A}\right\}=\left\{0\right\}$.\\

\noindent {\textbf{Step 3.}} In this step, we prove that $\ker\{{\rm A}\}=\{0\}$. For this aim, let $\left(\tilde{u},\tilde{y}\right)\in \ker\{{\rm A}\}$, i.e. 
$$
a\left((\tilde{u},\tilde{y}),(\varphi,\psi)\right)=0,\quad \forall \left(\varphi,\psi\right)\in H_0^1(\Omega)\times H_0^1(\Omega).
$$
Equivalently, we have  
\begin{equation}\label{LUL1}
\begin{array}{ll}
\displaystyle-\la^2\int_{\Omega}\left(\tilde{u}\bar{\varphi}+\tilde{y}\bar{\psi}\right)dx+i\la\,  \int_{\Omega} c(x)\left(\tilde{y}\bar{\varphi}-\tilde{u}\bar{\psi}\right)dx+\int_{\Omega}\left( a\nabla\tilde{u}\nabla \bar{\varphi}
+ \nabla\tilde{y}\nabla\bar{\psi}\right)dx\\
\displaystyle\hspace{1cm}+i\la \int_{\Omega}b(x)\nabla\tilde{u}\nabla\bar{\varphi}dx=0.
\end{array}
\end{equation}

\noindent Taking $\varphi=\tilde{u}$ and $\psi=\tilde{y}$ in equation \eqref{LUL1}, we get   
\begin{equation*}
-\la^2\int_{\Omega}\abs{\tilde{u}}^2dx-\la^2\int_{\Omega}\abs{\tilde{y}}^2dx+a\int_{\Omega}\abs{\nabla \tilde{u}}^2dx+\int_{0}^{L}\abs{\nabla\tilde{y}}^2dx-2\la\Im\left(\int_{\Omega}c(x)\tilde{y}\bar{\tilde{u}}dx\right)+i\la \int_{\Omega}b(x)\abs{\nabla\tilde{u}}^2 dx=0.
\end{equation*}
Taking the imaginary part of the above equality, we get 
\begin{equation*}
\int_{\Omega}b(x)\abs{\nabla\tilde{u}}^2 dx=0,
\end{equation*}
 we get, 
\begin{equation}\label{LUL3}
\nabla\tilde{u}=0,\qquad \quad \text{in}\quad \omega_b.
\end{equation}
Then, we find that 
$$
\left\{\begin{array}{lll}
-\la^2\tilde{u}-a\Delta u+i\la c(x)\tilde{y}&=&0,\hspace{1cm}\text{in}\quad \Omega\\[0.1in]
-\la^2\tilde{y}-\Delta y-i\la c(x)\tilde{u}&=&0,\hspace{1cm}\text{in}\quad \Omega\label{eq-2.28}\\[0.1in]
\tilde{u}=\tilde{y}&=&0.\hspace{1cm}\text{in}\quad\omega\label{eq-2.29}
\end{array}
\right.
$$
Now, it is easy to see that the vector $\tilde{U}$ defined by
$\tilde{U}=\left(\tilde{u},i\la \tilde{u},\tilde{y},i\la \tilde{y}\right)$
belongs to $D(\mathcal{A})$ and we have $i\la \tilde{U}-\mathcal{A}\tilde{U}=0$.
Therefore, $\tilde{U}\in \ker\left(i\la I-\mathcal{A}\right)$, then by Lemmas \ref{ker1} and \ref{ker2}, we get $\tilde{U}=0$, this implies that $\tilde{u}=\tilde{y}=0$. Consequently, $\ker\left\{A\right\}=\left\{0\right\}$. 
Thus, from step 3 and Fredholm alternative, we get that the operator ${\rm A}$ is an isomorphism. It is easy to see that the operator ${\rm L}$ is continuous from $V$ to $L^2(\Omega)\times L^2(\Omega)$. Consequently, Equation \eqref{aL} admits a unique solution $(u,y)\in H_0^1(\Omega)\times H_0^1(\Omega)$. Thus, using $v=i\la u-f_1$, $z=i\la y-f_3$ and using the classical regularity arguments, we conclude that Equation \eqref{eq-2.30} admits a unique solution $U\in D\left(\mathcal{A}\right)$. The proof is thus complete. 
\end{proof}
\\

\noindent \textbf{Proof of Theorem \ref{kvstrongstability}.}  Using Lemma \ref{ker1} and \ref{ker2}, we have that $\mathcal{A}$ has non pure imaginary eigenvalues. According to Lemmas \ref{ker1}, \ref{ker2}, \ref{surj} and with the help of the closed graph theorem of Banach, we deduce that $\sigma(\mathcal{A})\cap i\mathbb{R}=\emptyset$. Thus, we get the conclusion by applying Arendt-Batty Theorem. The proof of the theorem is thus complete.
\section{Non Uniform Stability} \label{Non Uniform Stability} 
\noindent In this section, our aim is to prove the non-uniform stability of the system \eqref{k-v}-\eqref{boundary conditions}. \\
For this aim, assume that 
\begin{equation}\label{cndt}
b(x)=b\in \R^{\ast}_ +\quad\text{and}\quad c(x)=c\in \R^{\ast} .
\end{equation}

\noindent Our main result in this section is the following theorem.
\begin{Theorem}\label{non-uniform}
Under condition \eqref{cndt}. Then, the energy of the system \eqref{k-v}-\eqref{boundary conditions} does not decay uniformly in the energy space $\mathcal{H}$.
\end{Theorem}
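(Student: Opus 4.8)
The plan is to rule out exponential stability by means of the Huang--Pr\"uss (Gearhart) frequency-domain criterion: a bounded $C_0$-semigroup $e^{t\mathcal{A}}$ on a Hilbert space fails to be exponentially stable as soon as $i\mathbb{R}\subset\rho(\mathcal{A})$ but $\sup_{\beta\in\mathbb{R}}\|(i\beta I-\mathcal{A})^{-1}\|_{\mathcal{L}(\mathcal{H})}=+\infty$. The first condition is already provided by Theorem \ref{kvstrongstability} ($\sigma(\mathcal{A})\cap i\mathbb{R}=\emptyset$), so everything reduces to exhibiting frequencies along which the resolvent norm explodes. Since under \eqref{cndt} the coefficients $b,c$ are constants, $\mathcal{A}$ is block-diagonalized by the $L^2(\Omega)$-orthonormal basis $(e_k)_{k\ge1}$ of Dirichlet eigenfunctions, $-\Delta e_k=\mu_k e_k$, $e_k\in H_0^1(\Omega)$, $\mu_k\uparrow+\infty$: each subspace $\mathrm{span}\{(e_k,0,0,0),(0,e_k,0,0),(0,0,e_k,0),(0,0,0,e_k)\}$ is $\mathcal{A}$-invariant and on it $\mathcal{A}$ acts as
\[
\mathcal{A}_k=\begin{pmatrix}0&1&0&0\\ -a\mu_k&-b\mu_k&0&-c\\ 0&0&0&1\\ 0&c&-\mu_k&0\end{pmatrix}.
\]

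Eliminating the two velocity rows, the characteristic polynomial of $\mathcal{A}_k$ is
\[
P_k(\lambda)=(\lambda^2+b\mu_k\lambda+a\mu_k)(\lambda^2+\mu_k)+c^2\lambda^2=\lambda^4+b\mu_k\lambda^3+\big((1+a)\mu_k+c^2\big)\lambda^2+b\mu_k^2\lambda+a\mu_k^2 .
\]
The core step is to track, as $\mu_k\to\infty$, the pair of roots of $P_k$ lying near $\pm i\sqrt{\mu_k}$ (the remaining two roots behave like $-a/b$ and $-b\mu_k$ and so stay at distance $\asymp1$, respectively $\asymp\mu_k$, from $i\mathbb{R}$; they are of no use here). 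Substituting $\lambda=i\sqrt{\mu_k}+\mu_k^{-1}w$ --- which cancels the top-order $\mu_k^{2}$ and $\mu_k^{5/2}$ terms and puts the surviving constant term $-c^2\mu_k$ in balance with the term linear in $w$ --- and dividing by $\mu_k$ yields
\[
\mu_k^{-1}P_k\!\big(i\sqrt{\mu_k}+\mu_k^{-1}w\big)=-c^2-2bw+O\!\big(\mu_k^{-1/2}(1+|w|^2)\big),
\]
uniformly for $w$ in compact subsets of $\mathbb{C}$. The limit $w\mapsto -c^2-2bw$ has the unique, simple zero $w=-c^2/(2b)$; hence by Rouch\'e's theorem (Hurwitz), for $k$ large $P_k$ has a root
\[
\lambda_k=i\sqrt{\mu_k}-\frac{c^2}{2b\,\mu_k}+o\!\left(\frac{1}{\mu_k}\right),\qquad\text{so that}\qquad\Re\lambda_k\to0^-,\quad|\lambda_k|\to\infty .
\]
(That $\Re\lambda_k<0$ is forced in any case by $\Re(\mathcal{A}U,U)_{\mathcal{H}}\le0$; the hypothesis $c\ne0$ is precisely what keeps $\Re\lambda_k$ from being bounded away from $0$.)

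To conclude, each $\lambda_k$ is a genuine eigenvalue of $\mathcal{A}$ with a normalized eigenvector $\phi_k\in D(\mathcal{A})$, $\|\phi_k\|_{\mathcal{H}}=1$. With $\beta_k:=\mathrm{Im}\,\lambda_k\in\mathbb{R}$ one has $(i\beta_k I-\mathcal{A})\phi_k=-\Re\lambda_k\,\phi_k$, whence
\[
\big\|(i\beta_k I-\mathcal{A})^{-1}\big\|_{\mathcal{L}(\mathcal{H})}\ \ge\ \frac{1}{|\Re\lambda_k|}\ \longrightarrow\ +\infty ,
\]
so the Huang--Pr\"uss criterion rules out exponential stability; this is exactly the absence of uniform energy decay claimed in Theorem \ref{non-uniform}. (A computation-only variant avoids the root analysis altogether: take $\beta_k=\sqrt{\mu_k}$ and the test state $U_k=(u_k,i\beta_k u_k,y_k,i\beta_k y_k)$ with $y_k=\gamma_k e_k$, $u_k=-\tfrac{c}{b\mu_k}y_k$, $\gamma_k$ normalizing $\|U_k\|_{\mathcal{H}}=1$; a direct check gives $\|(i\beta_k I-\mathcal{A})U_k\|_{\mathcal{H}}=O(\mu_k^{-1/2})\to0$.) The one genuinely delicate point is the asymptotic root-location: the expansion of $P_k$ must be carried far enough, and Rouch\'e applied carefully enough, to be sure the perturbed root truly drifts onto $i\mathbb{R}$ and is not left at a fixed positive distance from it --- everything else is elementary linear algebra and bookkeeping.
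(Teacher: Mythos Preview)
Your proof is correct and follows essentially the same route as the paper: both reduce $\mathcal{A}$ to a $4\times 4$ block per Dirichlet mode, derive the same quartic characteristic polynomial (your $\mu_k$ is the paper's $\mu_k^2$), and use Rouch\'e to locate a branch of eigenvalues $\lambda_k=i\sqrt{\mu_k}-\tfrac{c^2}{2b\mu_k}+o(\mu_k^{-1})$ drifting onto the imaginary axis. Your substitution $\lambda=i\sqrt{\mu_k}+\mu_k^{-1}w$ is a bit more direct than the paper's two-stage rescaling, and you make the Huang--Pr\"uss resolvent blow-up explicit (and offer a quasimode alternative) where the paper simply invokes that spectrum approaching $i\mathbb{R}$ forbids uniform decay, but the substance is the same.
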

\noindent For the proof of Theorem \ref{non-uniform}, we aim to study the asymptotic behavior of the eigenvalues of the operator $\mathcal{A}$ near the imaginary axis. First, we will determine the characteristic equation satisfied by the eigenvalues of $\mathcal{A}$. So, let $\lambda\in\C$ be an eigenvalue of $\mathcal{A}$ and let $U=(u,v,y,z)^{\top}\in D(\mathcal{A})$ be an associated eigenvector, i.e,
$$\mathcal{A}U=\la U,$$
Equivalently,
\begin{eqnarray}
v&=&\lambda u,\label{u1}\\ \noalign{\medskip}
\divv(a\nabla u+b\nabla v)-c z&=&\lambda v,\label{u2}\\ \noalign{\medskip}
z&=&\lambda y,\label{u3}\\ \noalign{\medskip}
\divv(\nabla y)+c v&=&\lambda z.\label{u4}
\end{eqnarray}
Inserting \eqref{u1} and \eqref{u3} into \eqref{u2} and \eqref{u4} respectively, we get
\begin{eqnarray}
\lambda^2u-(a+\lambda b)\Delta u+c\lambda y&=&0,\label{u5}\\ \noalign{\medskip}
\lambda^2y-\Delta y-c\lambda u&=&0.\label{u6}
\end{eqnarray}
From \eqref{u6}, we have
\begin{equation}\label{u7}
u=\frac{1}{\la c}\left[\Delta y-\la^2 y\right].
\end{equation}
Substitute \eqref{u7} in \eqref{u5}, we get
\begin{equation}\label{u8}
\left\{
\begin{array}{lll}
(a+\la b)\Delta^2y-\left[(1+a)\la^2+b\la^3\right]\Delta y+\la^2\left(\la^2+\alpha^2\right)y=0, \quad &\text{in}&\quad\Omega\\ \\
	y=\Delta y=0 \quad &\text{on}&\quad\Gamma.
\end{array}\right.
		\end{equation}
		Now, let $(\mu_k,\varphi_k)$ be, respectively, the sequence of the eigenvalues and the eigenvectors of the Laplace operator with fully Dirichlet boundary conditions in $\Omega$, i.e,
		\begin{equation}\label{u9}
	\left\{\begin{array}{lll}
	-\Delta\varphi_k=\mu_k^2\varphi_k&\text{in}&\Omega,\\
	\varphi_k=0&\text{on}&\Gamma.
	\end{array}\right.
	\end{equation}
Then by taking $y=\varphi_k$ in \eqref{u8}, we deduce the following characteristic equation
	\begin{equation}\label{charac}
	P(\la)=\la^4+b\mu_k^2\la^3+\left[(1+a)\mu_k^2+c^2\right]\la^2+b\mu_k^4\la+a\mu_k^4=0.
	\end{equation}
	\begin{Proposition}\label{prop-eigen}
	There exists $k_0\in \N^{\star}$ sufficiently large and two sequences $\left(\la_{1,k}\right)_{\abs{k}\geq k_0}$ and $\left(\la_{2,k}\right)_{\abs{k}\geq k_0}$ of simple roots of $P$ satisfying the following asymptotic behavior
	\begin{equation}\label{asy1}
	\lambda_{1,k}=i\mu_k-\frac{c^2}{2b\mu_k^2}+o\left(\frac{1}{\mu_k^3}\right)
	\end{equation}
and 
\begin{equation}\label{asy2}
	\lambda_{2,k}=-i\mu_k-\frac{c^2}{2b\mu_k^2}+o\left(\frac{1}{\mu_k^3}\right).
	\end{equation}
	\end{Proposition}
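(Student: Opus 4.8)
The strategy is to treat the coupling term $c^2\la^2$ in the characteristic polynomial \eqref{charac} as a small perturbation, when $\mu_k$ is large, of the polynomial obtained for $c=0$. First I would record the factorisation
\[
P(\la)=\bigl(\la^2+\mu_k^2\bigr)\bigl(\la^2+b\mu_k^2\la+a\mu_k^2\bigr)+c^2\la^2 ,
\]
which follows by expanding the right–hand side. For $c=0$ the four roots are $\pm i\mu_k$ together with the two roots of $\la^2+b\mu_k^2\la+a\mu_k^2=0$, the latter being $\sim -b\mu_k^2$ and $\sim -a/b$ and hence far from the imaginary axis. So for $c\neq0$ and $\mu_k$ large one expects exactly two roots of $P$ close to $\pm i\mu_k$, together with two roots confined to a fixed left half–plane $\{\Re\la\le-\beta\}$; only the first two are relevant here.

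To locate the root near $i\mu_k$ I would set $\la=i\mu_k+\delta$ and substitute into the factored form, using
\[
\la^2+\mu_k^2=2i\mu_k\delta+\delta^2,\qquad \bigl(\la^2+b\mu_k^2\la+a\mu_k^2\bigr)\big|_{\la=i\mu_k}=ib\mu_k^3+(a-1)\mu_k^2,\qquad c^2\la^2\big|_{\la=i\mu_k}=-c^2\mu_k^2 .
\]
Dividing $P(\la)=0$ by the coefficient $-2b\mu_k^4$ of the term linear in $\delta$ turns the equation into a fixed–point problem $\delta=\Phi_k(\delta)$ with $\Phi_k(0)=-\frac{c^2}{2b\mu_k^2}+O(\mu_k^{-3})$, where $\Phi_k$ is holomorphic and Lipschitz with constant $O(\mu_k^{-1})$ on a disk $\{|\delta|\le C\mu_k^{-2}\}$. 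A contraction argument — equivalently Rouché's theorem, comparing $P$ with its affine part $-2b\mu_k^4\bigl(\la-i\mu_k+\frac{c^2}{2b\mu_k^2}\bigr)\cdot ib\mu_k^3$ on a small circle about $i\mu_k-\frac{c^2}{2b\mu_k^2}$ — then produces, for all $|k|\ge k_0$ with $k_0$ large enough, a unique, hence simple, root $\la_{1,k}$ of $P$ in that disk. Feeding the first approximation back into $\delta=\Phi_k(\delta)$ once more gives the expansion \eqref{asy1}; in particular $\Re\la_{1,k}\to 0^-$ at rate $-\frac{c^2}{2b\mu_k^2}$, which is the property used for the non–uniform stability.

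Since $P$ has real coefficients, $\overline{\la_{1,k}}$ is again a root of $P$; it lies near $\overline{i\mu_k}=-i\mu_k$, and because $-\frac{c^2}{2b\mu_k^2}$ is real, setting $\la_{2,k}=\overline{\la_{1,k}}$ yields exactly \eqref{asy2} (alternatively one repeats the computation with $\la=-i\mu_k+\delta$). The roots $\la_{1,k}$ and $\la_{2,k}$ lie in disjoint small disks about $i\mu_k$ and $-i\mu_k$, so they are distinct and each simple for $|k|\ge k_0$.

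The main obstacle is the bookkeeping needed to make the perturbation rigorous uniformly in $k$: one must check that every monomial discarded in passing to $\Phi_k$ is of strictly smaller order in $\mu_k$ than the terms retained, so that the Lipschitz constant of $\Phi_k$ stays below $1$ and the radius of the invariant disk can be taken $o(\mu_k^{-2})$. Once this is established, existence, simplicity and the asymptotics \eqref{asy1}--\eqref{asy2} all follow.
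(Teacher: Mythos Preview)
Your proposal is correct and follows the same overall strategy as the paper: locate the roots near $\pm i\mu_k$ by a Rouch\'e/perturbation argument and then read off the first--order correction. The paper organises the computation slightly differently. Instead of your factorisation $P(\la)=(\la^2+\mu_k^2)(\la^2+b\mu_k^2\la+a\mu_k^2)+c^2\la^2$, it rescales $\xi=\la/\mu_k$, $\zeta_k=1/\mu_k$, and compares the fixed cubic $f(\xi)=b(\xi^3+\xi)$ with the remainder $f_1(\xi)=O(\zeta_k)$ on small disks about $\pm i$; Rouch\'e then gives $\xi_k^\pm=\pm i+\varepsilon_k^\pm$ with $\varepsilon_k^\pm\to0$, and one substitutes back into $P$ to extract $\tilde\varepsilon_k=-\frac{c^2}{2b\mu_k^2}+o(\mu_k^{-3})$. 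Your factorisation makes the leading correction appear more transparently (the linear--in--$\delta$ coefficient $-2b\mu_k^4$ and the constant term $-c^2\mu_k^2$ are visible at once), while the paper's rescaling makes the Rouch\'e hypothesis $|f_1|<|f|$ on $\partial D(\pm i,r_k)$ particularly clean, since one is comparing a fixed polynomial with terms of order $\zeta_k$. Your use of complex conjugation for $\la_{2,k}$ is a small simplification over the paper, which repeats the argument at $-i$.
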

\begin{proof}
Set $\xi=\frac{\lambda}{\mu_k}$ and $\zeta_k=\frac{1}{\mu_k}$ in \eqref{charac}, we obtain
	\begin{equation}\label{u12}
	h(\xi)=b\xi^3+b\xi+a\zeta_k+c^2\xi^2\zeta_ k^3+\left(1+a\right)\xi^2\zeta_k+\xi^4\zeta_k=0.
	\end{equation}
Now, in order to find the eigenvalues of the operator $\mathcal{A}$ we need to give the roots of $h$. For this aim,  we will proceed in the following two steps. \\
\textbf{Step 1.}
Let $$ f(\xi)=b(\xi^3+\xi)\quad \text{and}\quad f_1(\xi)=a\zeta_k+c^2\xi^2\zeta_ k^3+\left(1+a\right)\xi^2\zeta_k+\xi^4\zeta_k.$$
We look for $r_k$ sufficiently small such that 
$$\left|f\right|>\left|h-f\right|=\left|f_1\right|\quad\text{on}\quad \partial D,$$
where $D=\left\{\xi\in \mathcal{C}; \left|\xi-i\right|\leq r_k\right\}$.\\
Let $\xi\in \partial D(i,r_k)$, then $\xi=i+r_k e^{i\theta}$ with $0\leq \theta \leq 2\pi$. We have 
$$f(\xi)=b(\xi^3+\xi)=b\xi r_k\left(2ie^{i\theta}+r_k e^{2i\theta}\right).$$
But, if $r_k\leq \frac{1}{2}$ then
$$\abs{\xi}\geq \abs{1-r_k}\geq\frac{1}{2}$$
and 
$$\left|2ie^{i\theta}+r_ke^{2i\theta}\right|\geq \left|2ie^{i\theta}\right|-r_k\geq \frac{3}{2}.$$
This implies that
$$ \abs{f}=\left|b(\xi^3+\xi)\right|\geq \frac{3 b r_k}{4},\quad \text{if} \quad r_k\leq\frac{1}{2}.$$
On the other hand, since $\xi$ is bounded in $D$ and $\xi_k\rightarrow 0$ we have,
$$\abs{f_1(\xi)}\leq c \,\zeta_k, \quad \text{for some constant }\quad c>0.$$
So, it is enough to choose $r_k=\frac{4c}{3b}\zeta_k$.\\
Similarly, we can find $r_k$ sufficiently small such that
$$\abs{f}>\abs{h-f}=\abs{f_1}\quad \text{on}\quad \partial D^{\prime}=\partial\left\{\xi\in \mathbb{C}; \abs{\xi+i}\leq r_k\right\}.$$
\textbf{Step 2.}
By using Step 1. and Rouch\'e's Theorem, there exists $k_0$ large enough such that for all $\abs{k}\geq k_0$ the roots of the polynoimal $h$ are close to the roots of the polynomial $f(\xi)=b(\xi^3+\xi)$.
Then, 
\begin{equation}\label{eps}
\xi_k^+=i+\varepsilon_k^+\quad\text{and}\quad \xi_k^-=-i+\varepsilon_k^-,\quad \text{with}\quad \lim_{\abs{k}\to\infty}\varepsilon^{\pm}_{k}=0.
\end{equation}
Inserting Equation \eqref{eps} in Equation \eqref{u12} and using the fact that $\la_k^{\pm}=\mu_k\xi_k^{\pm}$, we get 
	\begin{equation}\label{u13}
	\varepsilon_k^{\pm}=o\left(\frac{1}{\mu_k}\right)\quad\text{and}\quad \la_k^{\pm}=\pm i\mu_k+\tilde{\varepsilon}_k,\quad \text{where}\quad \lim_{|k|\rightarrow+\infty}\tilde{\varepsilon}_k=0.
	\end{equation}
	Multiplying Equation \eqref{charac} by $\frac{1}{\mu_k^4}$, we get 
	\begin{equation}\label{u14}
	\frac{1}{\mu_k^4}\la^4+\frac{b}{\mu_k^2}\la^3+\frac{(1+a)}{\mu_k^2}\la^2+\frac{c^2}{\mu_k^4}\la^2+b\la+a=0.
	\end{equation}
	Inserting Equation \eqref{u13} in Equation \eqref{u14}, we get 
	\begin{equation}\label{nsu17}
	\tilde{\varepsilon}_k=-\frac{c^2}{2b\mu_k^2}+o\left(\frac{1}{\mu_k^3}\right).
	\end{equation}
	The proof is thus complete.
\end{proof}

\noindent{\bf Proof of Theorem \ref{non-uniform}.} From Proposition \ref{prop-eigen} the large eigenvalues in \eqref{asy1}-\eqref{asy2} approach the imaginary axis and therefore the system \eqref{k-v}-\eqref{boundary conditions} is not uniformly stable in the energy space $\mathcal{H}$.

%%%%%%%%%%%%%%%%%%%%%%%%%%%%%%
%%%%%%%%%%%%%%%%%%%%%%%%%%%%%%%%%%%%%%%%%%%%%%%%%%%%%%%%%%%%%%%%%%%%%%%%%%%%%%%%%%%%%%%%%%
\section{Polynomial Stability}\label{Polynomial Stability}\label{Section-Poly}
\noindent In this section, we will study the polynomial energy decay rate of the system \eqref{k-v}-\eqref{boundary conditions}.  First, we present the definition of some geometric conditions that we encounter in this work.
\begin{definition}\label{GCC}
 For a subset $\omega$ of $\Omega$ and $T > 0$, we shall say that $(\omega,T )$ satisfies the \textbf{Geometric Control Condition} if there exists $T>0$ such that every geodesic traveling at speed one in $\Omega$ meets $\omega$ in time $t <T$.
\end{definition}

\begin{definition}\label{Gammacondition}
Saying that $\omega$ satisfies the \textbf{$\Gamma-$condition} if it contains a neighborhood in $\Omega$ of the set
$$
\left\{x\in \Gamma;\ (x-x_0)\cdot \nu(x)>0\right\},
$$
for some $x_0\in \R^n$, where $\nu$ is the outward unit normal vector to $\Gamma=\partial \Omega$.
\end{definition}

\begin{definition}\label{PMGC}
A subset $\omega$ satisfies the \textbf{Piecewise Multiplier Geometric Condition} (PMGC in short) if there exist: 
\begin{enumerate}
\item[$\bullet$] $\Omega_j\subset \Omega$ having Lipschitz boundary $\Gamma_j$.
\item[$\bullet$] $x_j\in \R^N$,\ $j=1,\cdots M$.   
\end{enumerate}
such that 
\begin{enumerate}
\item $\Omega_j\cap \Omega_i=\emptyset$ for $j\neq i$.
\item $\omega$ contains a neighborhood in $\Omega$ of the set 
$$
\bigcup_{j=1}^{M}\gamma_j(x_j)\cup \left(\Omega\backslash \bigcup_{j=1}^M\Omega_j\right)
$$
\end{enumerate}
where $\gamma_j(x_j)=\left\{x\in \Gamma_j;\ (x-x_j)\cdot \nu_j(x)>0\right\}$ and $\nu_j$ is the outward unit normal vector to $\Gamma_j$.
\end{definition}

\noindent In order to study the energy decay rate of the system, we consider the following  geometric assumptions on $\omega_b,\ \omega_c\ \text{and}\ \omega=\omega_b\cap \omega_c$:\\[0.1in]
\noindent (H1) The open subset $\omega$ verifies the GCC (see Figure \ref{F6} and Figure \ref{SGCC1}).

%\noindent (H1)  Assume that there exists $\omega_{\varepsilon}\subset\omega$,  $\omega_{\varepsilon}$ verifies the GCC and meas$(\overline{\omega}\cap \Gamma)>0$ (see Figure \ref{F6}).\\
%(H2)  The open subset $\omega$ verifies the SGCC such that meas$(\overline{\omega_b}\cap \Gamma)>0$ and $\Omega$ is a non convex open set (see Figure \ref{SGCC1}).\\
\noindent (H2) Assume that meas$(\overline{\omega_c}\cap \Gamma)>0$ and meas$(\overline{\omega_b}\cap \Gamma)>0$. Also, assume that $\omega_c\subset\omega_b$ and  $\omega_c$ satisfies the GCC (see Figure \ref{F3}).\\ 
(H3) Assume that $\omega_b\subset\Omega$, $\overline{\omega_c}\subset\omega_b$ such that $\Omega$ is a non-convex open set and $\omega_c$ satisfies GCC (see Figure \ref{SGCC2}).\\
(H4) Assume that $\Omega=(0,L)\times (0,L)$, $\omega_c\subset\omega_b$ such that $\omega_b=\left\{(x,y)\in \R^2; \varepsilon_1<x<\varepsilon_4\,\text{and}\,0<y<L\right\}$, $\omega_c=\left\{(x,y)\in \R^2; \varepsilon_2<x<\varepsilon_3\,\text{and}\,0<y<L\right\}$ for  $0<\varepsilon_1<\varepsilon_2<\varepsilon_3<\varepsilon_4<L$ (see Figure \ref{SQ1}).\\
(H5) Assume that $\Omega=(0,L)\times (0,L)$, $\omega_c\subset\omega_b$  such that $\omega_b=\left\{(x,y)\in \R^2; 0<x<\varepsilon_2\,\text{and}\,0<y<L\right\}$ and $\omega_c=\left\{(x,y)\in \R^2; 0<x<\varepsilon_1\,\text{and}\,0<y<L\right\}$ for $0<\varepsilon_1<\varepsilon_2<L$ (see Figure \ref{SQ2}).

\begin{figure}[!h]
\begin{floatrow}
\ffigbox{\includegraphics[scale = 0.32]{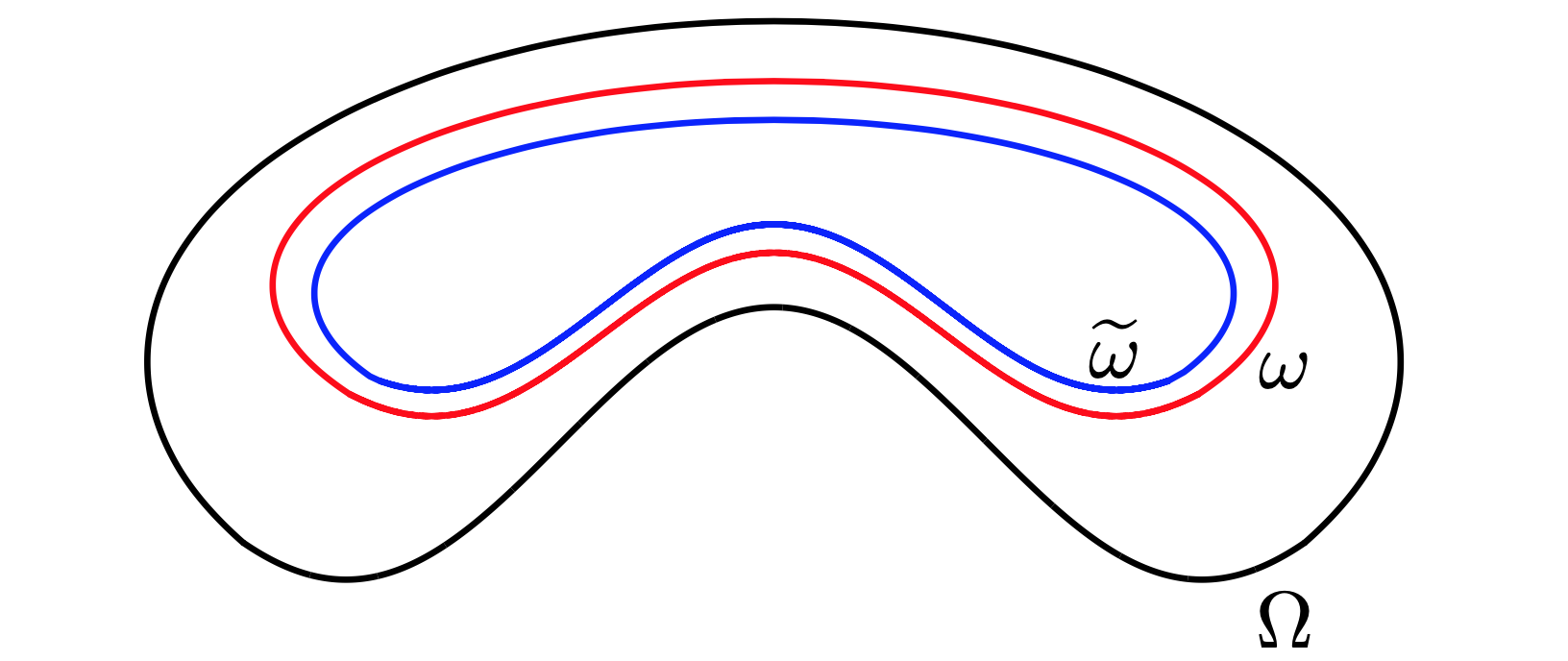}}{\caption{}\label{SGCC1}}
\ffigbox{\includegraphics[scale = 0.37]{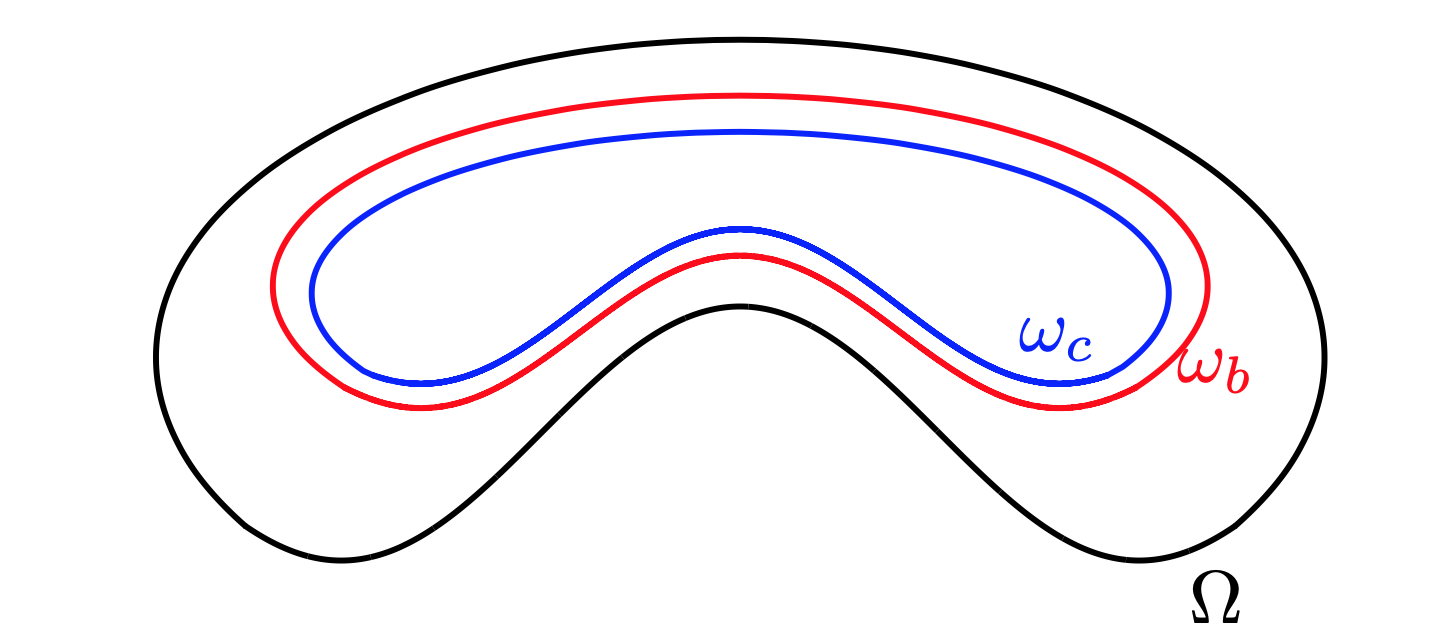}}{\caption{}\label{SGCC2}}
\end{floatrow}
\end{figure}

%\begin{figure}[!h]
%\includegraphics[scale=0.4]{Figure9.png}\caption{}\label{SGCC1}
%\end{figure}

\begin{figure}[!h]
\begin{floatrow}
\ffigbox{\includegraphics[scale = 0.5]{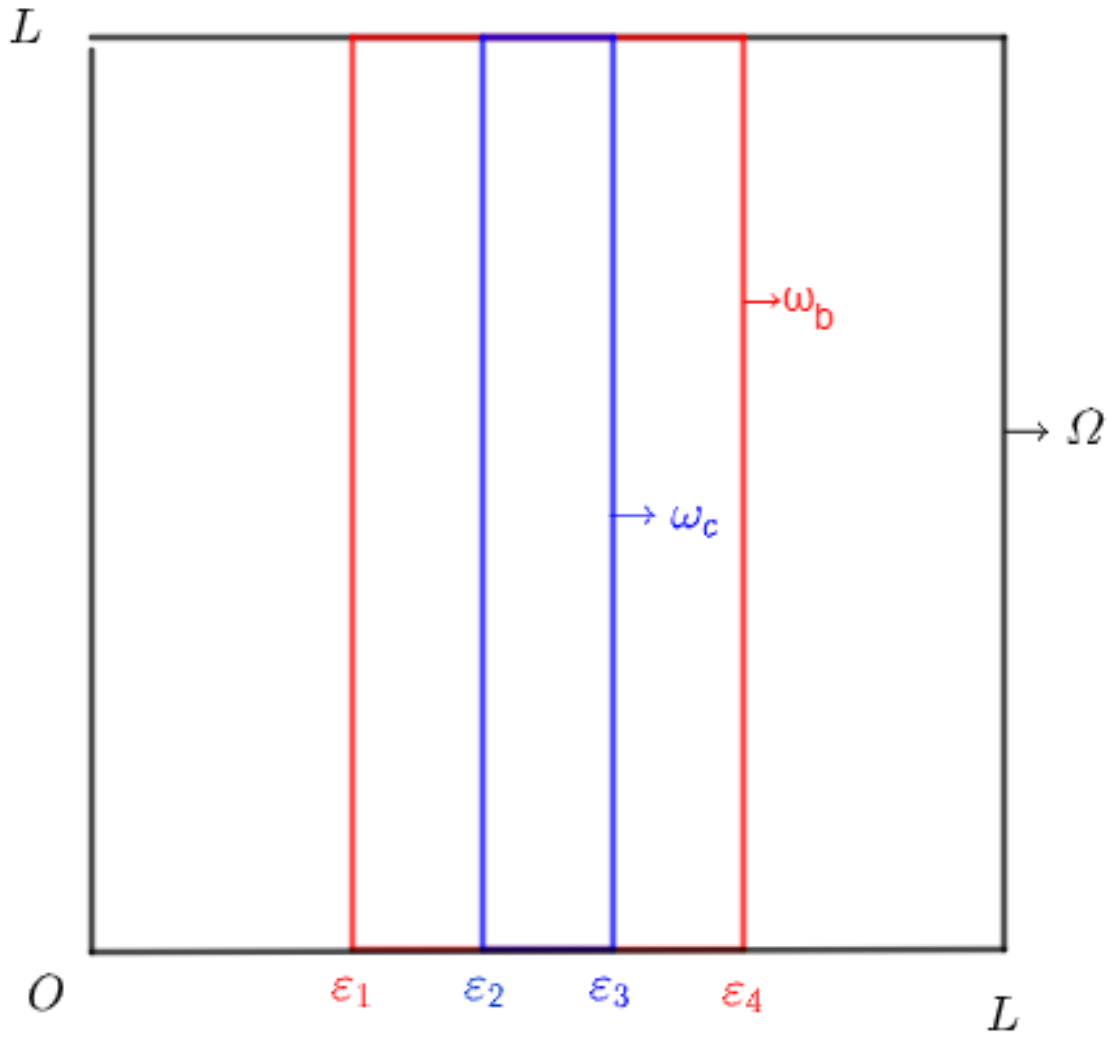}}{\caption{}\label{SQ1}}
\ffigbox{\includegraphics[scale = 0.5]{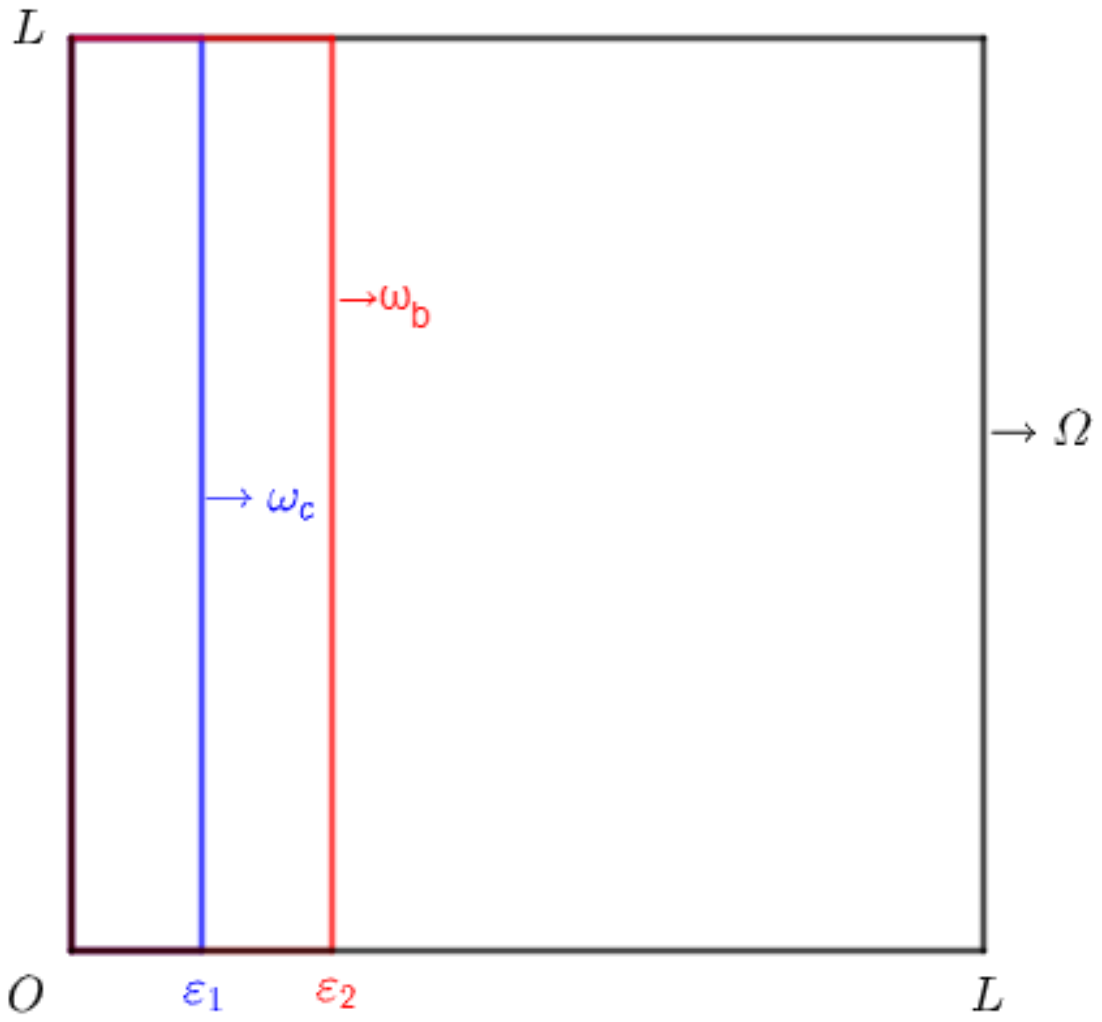}}{\caption{}\label{SQ2}}
\end{floatrow}
\end{figure}

\begin{rem}{\rm \textbf{(About Geometric Conditions and Smoothness of the boundary.)}}
\begin{enumerate}
\item If the $\Gamma-$condition applies, then it is enough to give a Lipschitz boundary conditions to $\Gamma$.
\item The GCC is an optimal condition, then we need more regularity to the $\Gamma-$condition, thus we need to take $\Gamma$ of class $C^3$. 
\item In (H1), if $\Omega$ is a convex domain.  Then,  the condition $\omega$ satisfies the GCC means that meas$\left(\overline{\omega}\cap\Gamma\right)>0$ (i.e meas$\left(\overline{\omega_b}\cap\Gamma\right)>0$ and meas$\left(\overline{\omega_c}\cap\Gamma\right)>0$).
\item In (H1), if $\Omega$ is a non convex domain.  When $\omega$ satisfies the GCC, we study the case when meas$\left(\overline{\omega_b}\cap\Gamma\right)>0$ and without the condition meas$\left(\overline{\omega_c}\cap\Gamma\right)>0$. For the case when both $\omega_b$ and $\omega_c$ are not near the boundary, we don't study this case since the strong stability remains an open problem in this case.
\item In (H4) and (H5), $\omega_b$ and $\omega_c$ does not satisfy any geometric condition. 
\end{enumerate}
\end{rem}

\noindent One of the main tools to prove the polynomial stability of \eqref{k-v}-\eqref{boundary conditions} such that the assumption (H1) holds and such that $c\in W^{1,\infty}(\Omega)$ is to use the exponential energy decay of the coupled wave equations via velocities  with two viscous dampings. We consider the following system 
\begin{equation}\label{AUX}
\left\{
\begin{array}{lll}
\varphi_{tt}-a\Delta\varphi+d(x)\varphi_t+c(x)\psi_t=0&\text{in}&\Omega\times \R^+,\\ [0.1in]
\psi_{tt}-\Delta\psi+d(x)\psi_t-c(x)\varphi_t=0&\text{in}&\Omega\times \R^+,\\ [0.1in]
\varphi(x,t)=\psi(x,t)=0&\text{on}&\Gamma\times \R^+,\\[0.1in]
(\varphi(x,0),\psi(x,0))=(\varphi_0,\psi_0)\quad \text{and}\quad (\varphi_t(x,0),\psi_t(x,0))=(\varphi_1,\psi_1)&\text{in}&\Omega.
\end{array}
\right.
\end{equation}
where $d\in W^{1,\infty}(\Omega)$ such that 
$$ d(x)\geq d_0>0 \quad\text{on}\quad\omega_\epsilon\subset\omega\subset\Omega.
$$
\noindent The energy of System  \eqref{AUX} is given by 
\begin{equation*}
E_{aux}(t)=\frac{1}{2}\left(\int_{\Omega}\abs{\varphi_t}^2+a\abs{\nabla\varphi}^2+\abs{\psi_t}^2+\abs{\nabla\psi}^2dx\right)
\end{equation*}
and by a straightforward calculation, we have 
\begin{equation*}
\frac{d}{dt}E_{aux}(t)=-\int_{\Omega}d(x)\abs{\varphi_t}^2dx-\int_{\Omega}d(x)\abs{\psi_t}^2dx\leq 0.
\end{equation*}
Thus, System \eqref{AUX} is dissipative in the sense that its energy is a non-increasing function with respect to the time variable $t$. The auxiliary energy Hilbert space of Problem \eqref{AUX} is given by 
\begin{equation*}
\mathcal{H}_{aux}=\left(H_0^1(\Omega)\times L^2(\Omega)\right)^2.
\end{equation*}
We denote by $\eta=\varphi_t$ and $\xi=\psi_t$.
The auxiliary energy space $\mathcal{H}_{aux}$ is endowed with the following norm
$$
\|\Phi\|_{\mathcal{H}_{aux}}^2=\|\eta\|^2+a\|\nabla\varphi\|^2+\|\xi\|^2+\|\nabla\psi\|^2,
$$
where $\|\cdot\|$ denotes the norm of $L^2(\Omega)$. We define the unbounded linear operator $\mathcal{A}_{aux}$ by 
\begin{equation}\label{dofao}
D(\mathcal{A}_{aux})=\left(\left(H^2(\Omega)\cap H^1_0(\Omega)\right)\times H^1_0(\Omega)\right)^2,
\end{equation}
and
$$
\mathcal{A}_{aux}(\varphi,\eta,\psi,\xi)^{\top}=\begin{pmatrix}
\eta\\ \vspace{0.2cm} 
\displaystyle{a\Delta\varphi-d(x)\eta-c(x)}\xi\\ \vspace{0.2cm}
\xi\\ \vspace{0.2cm}
 \Delta\psi-d(x)\xi+c(x)\eta
\end{pmatrix}.
$$

\noindent If $\Phi=\left(\varphi,\psi,\eta,\xi\right)$ is the state of System \eqref{AUX}, then this system is transformed into a first order evolution equation on the auxiliary Hilbert space $\mathcal{H}_{aux}$ given by
\begin{equation*}
\Phi_t=\mathcal{A}_{aux}\Phi,\quad \Phi(0)=\Phi_0,
\end{equation*}
where $\Phi_0=\left(\varphi_0,\eta_0,\psi_0,\xi_0\right)$. It is easy to see that $\mathcal{A}_{aux}$ is m-dissipative and generates a $C_0-$semigroup of contractions $\left(e^{t\mathcal{A}_{aux}}\right)_{t\geq 0}$.  
\begin{rem}\label{exponential-aux}
From \cite{chiraz}, we know that when $\omega$ satisfies the GCC condition and under the equality speed condition we have that the system of two wave equations coupled through velocity with one viscous damping is exponentially stable (see Theorem 3.1 in \cite{chiraz}). Taking this result into consideration and the fact that our system is considered with two viscous dampings and that $d,c\in W^{1,\infty}(\Omega)$,  by  proceeding with a similar proof with $a\in \R_{+}^{\ast}$ as in Theorem 3.1 in \cite{chiraz}, we can reach that the system \eqref{AUX} decays exponentially  such that there exists $M\geq 1$ and $\theta>0$ such that for all initial data $U_0\in \mathcal{H}_{aux}$, the energy of the system \eqref{AUX} satfisies the following estimation
$$E_{aux}(t)\leq Me^{-\theta t}E(0),\quad\forall\,t>0.
$$
\end{rem}

\noindent Now, we will state the main theorems in this section.
\begin{Theorem}\label{PolyStab}
Assume that the boundary $\Gamma$ is of class $C^3$. Also, assume that assumption (H1) holds and that $c\in W^{1,\infty}(\Omega)$. Then, for all initial data $U_0\in D\left(\AA\right)$, there exists a constant $C>0$ independent of $U_0$, such that the energy of the system \eqref{k-v}-\eqref{boundary conditions} satisfies the following estimation
	\begin{equation}\label{Energyp}
	E(t,U)\leq \frac{C}{t}\|U_0\|^2_{D(\mathcal{A})},\quad \forall t>0.
	\end{equation}
	
\end{Theorem}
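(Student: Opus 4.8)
The plan is to use the frequency domain characterization of polynomial stability due to Borichev–Tomilov: since $e^{t\mathcal{A}}$ is a bounded $C_0$-semigroup with $i\mathbb{R}\subset\rho(\mathcal{A})$ (established in Theorem~\ref{kvstrongstability}), the decay rate $E(t)\leq Ct^{-1}\|U_0\|^2_{D(\mathcal{A})}$ is equivalent to the resolvent bound
\begin{equation*}
\sup_{\la\in\R}\frac{1}{|\la|}\,\big\|(i\la I-\mathcal{A})^{-1}\big\|_{\mathcal{L}(\mathcal{H})}<\infty.
\end{equation*}
So first I would set up the standard contradiction argument: suppose the bound fails, then there exist sequences $\la_n\in\R$ with $|\la_n|\to\infty$ and $U_n=(u_n,v_n,y_n,z_n)\in D(\mathcal{A})$ with $\|U_n\|_{\mathcal{H}}=1$ such that $\la_n\|(i\la_n I-\mathcal{A})U_n\|_{\mathcal{H}}\to 0$. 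Writing out the four component equations $i\la_n u_n-v_n=f^1_n$, $i\la_n v_n-\divv(a\nabla u_n+b\nabla v_n)+c z_n=f^2_n$, $i\la_n y_n-z_n=f^3_n$, $i\la_n z_n-\Delta y_n-c v_n=f^4_n$ (with $f^j_n\to 0$ appropriately in $H^1_0$ or $L^2$, multiplied by $\la_n$), the dissipation identity immediately gives $\int_\Omega b(x)|\nabla v_n|^2\,dx=\Re\langle(i\la_n-\mathcal{A})U_n,U_n\rangle\to 0$, hence $\sqrt{b}\,\nabla v_n\to 0$ in $L^2(\Omega)$, and in particular $\nabla v_n\to 0$ and $v_n\to 0$ strongly in $L^2(\omega_b)$.

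The core of the proof is then a sequence of multiplier estimates to propagate this smallness, first from $\omega_b$ to the coupling region and then to all of $\Omega$, ultimately contradicting $\|U_n\|_{\mathcal{H}}=1$. The key device, announced in Remark~\ref{exponential-aux}, is the auxiliary system \eqref{AUX}: because $\omega$ satisfies the GCC and the wave speeds are matched, the two-wave system with two viscous dampings $d(x)\varphi_t$, $d(x)\psi_t$ supported near $\omega$ is exponentially stable, which by the Huang–Prüss/Gearhart theorem is equivalent to a uniform resolvent bound for $\mathcal{A}_{aux}$ on the imaginary axis. I would choose $d\in W^{1,\infty}(\Omega)$ with $d_0\leq d(x)$ on a slightly smaller $\omega_\epsilon\Subset\omega$ and $\supp d\subset\omega$, write the eigenvalue-type equations for $(u_n,y_n)$ in the form $(i\la_n-\mathcal{A}_{aux})(u_n,v_n,y_n,z_n)=G_n$ where $G_n$ collects the genuine right-hand sides $f^j_n$ together with the "damping defect" terms $d(x)v_n+\divv(b\nabla v_n)$ and $d(x)z_n$ coming from replacing the Kelvin–Voigt term by the viscous term; one then needs $\|G_n\|_{\mathcal{H}}\to 0$, so that the uniform resolvent bound for $\mathcal{A}_{aux}$ forces $\|U_n\|_{\mathcal{H}}\to0$. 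The terms involving $d(x)v_n$ and $d(x)z_n$ must be controlled: $d(x)v_n\to 0$ in $L^2$ follows from $v_n\to0$ in $L^2(\omega_b)\supset L^2(\supp d)$; the term $\divv(b\nabla v_n)$ is more delicate and is handled by using the second resolvent equation itself to express it as $\divv(b\nabla v_n)=i\la_n v_n+c z_n-f^2_n$, which is $O(\la_n)$ — not small — so instead one works with the weak formulation and uses a cutoff $\theta\in C_c^\infty(\omega)$ equal to $1$ on $\omega_\epsilon$, testing against $\theta^2 v_n$ and $\theta^2 z_n$ to absorb the bad term via $\int b\,\nabla v_n\cdot\nabla(\theta^2 v_n)$, which is small by the dissipation, at the cost of lower-order terms $\int b|\nabla\theta|^2|v_n|^2$ that are controlled since $\nabla v_n\to0$ on $\supp\theta\subset\omega_b$ and Poincaré gives $v_n\to0$ there.

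After these localization steps one concludes that $(u_n,v_n,y_n,z_n)\to 0$ in $\mathcal{H}$, contradicting the normalization, which proves the resolvent bound and hence \eqref{Energyp}. The main obstacle, as usual for Kelvin–Voigt damping, is precisely the mismatch between the viscoelastic term $\divv(b\nabla v_n)$ (which lives one derivative higher than a viscous damping $d v_n$) and the viscous damping in the auxiliary system: one cannot simply "add and subtract" because $\divv(b\nabla v_n)$ is only $O(\la_n)$ in $H^{-1}$, so the passage to $\mathcal{A}_{aux}$ must be done in the weak/variational sense with carefully chosen cutoffs, exploiting that $b$ is bounded below on the larger set $\omega_b\supset\supp d$ so that the dissipation $\int b|\nabla v_n|^2\to0$ genuinely kills $\nabla v_n$ on a neighborhood of $\supp d$. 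A secondary technical point is that $b$ and $c$ are only $L^\infty$ (not smooth), so all manipulations must avoid differentiating these coefficients; the hypothesis $c\in W^{1,\infty}$ in the statement is used only where one needs to differentiate the coupling term (e.g. in transferring regularity between $u_n$ and $y_n$), and the $C^3$ regularity of $\Gamma$ is needed for the elliptic regularity and for the GCC-based exponential stability of \eqref{AUX} to be applicable.
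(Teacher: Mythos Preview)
Your sketch has the right overall architecture (Borichev--Tomilov plus contradiction, dissipation estimate on $\omega_b$, then use the exponentially stable auxiliary system \eqref{AUX}), but there are two genuine gaps.

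First, the Borichev--Tomilov correspondence is misquoted: $E(t)\leq Ct^{-1}\|U_0\|^2_{D(\mathcal{A})}$ corresponds to $\|(i\la I-\mathcal{A})^{-1}\|=O(|\la|^2)$, not $O(|\la|)$ (see Theorem~\ref{bt} with $\ell=2$). Your contradiction hypothesis $\la_n\|(i\la_n I-\mathcal{A})U_n\|\to0$ only yields $\int b|\nabla v_n|^2=o(\la_n^{-1})$, hence $\|u_n\|_{L^2(\omega_b)}=o(\la_n^{-3/2})$, and the quantitative multiplier estimates needed later (e.g.\ bounding $\la_n^3\int_{\omega}u_n\bar y_n$ by $o(1)$) fail at this rate. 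The paper works with $\la_n^2(i\la_n I-\mathcal{A})U_n\to0$, which gives $\|u_n\|_{L^2(\omega_b)}=o(\la_n^{-2})$ and makes these estimates close.

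Second, and more seriously, in your plan to write $(i\la_n-\mathcal{A}_{aux})U_n=G_n$ and invoke the uniform resolvent bound, you control $d(x)v_n$ but never $d(x)z_n$: nothing so far says $z_n\to0$ on $\supp d\subset\omega$. This is not free; it is the content of the paper's Lemma~\ref{lem2-pol}, obtained by testing \eqref{pol4} against a cutoff multiplier $\la\zeta\bar y$ and using the improved local rates on $u_n,\nabla u_n,\nabla v_n$. Without this preliminary step the fourth component of $G_n$ is merely $O(1)$ and the auxiliary resolvent gives nothing. Relatedly, the term $\divv(b\nabla v_n)$ cannot be absorbed by ``testing against $\theta^2 v_n$'' as you suggest: that pairing yields local information on $\omega$, not the global smallness of $\|G_n\|_{\mathcal{H}}$ you need. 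The paper avoids both difficulties by using the auxiliary resolvent \emph{dually}: it solves $(i\la-\mathcal{A}_{aux})(\varphi,\eta,\psi,\xi)=(0,-u_n,0,-y_n)$ to produce bounded multipliers $\varphi,\psi\in H^2\cap H_0^1$ (Lemma~4.9), then tests \eqref{pol4}--\eqref{pol5} against $\la^2\bar\varphi,\la^2\bar\psi$ and integrates by parts. After integration by parts the Kelvin--Voigt term becomes $\la^2\int b\nabla v_n\cdot\nabla\bar\varphi$, which \emph{is} $o(1)$ by dissipation and the bound \eqref{bdd}; and the only place local smallness of $z_n$ is needed is to kill $\la^3\int d(x)y_n\bar\psi$, which is exactly where Lemma~\ref{lem2-pol} enters.
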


\noindent According to Theorem \ref{bt} of  Borichev and Tomilov (see Appendix), by taking $\ell=2$, the polynomial energy decay \eqref{Energyp} holds if the following conditions 
\begin{equation}\label{C1}\tag{${\rm{C1}}$}
i\R\subset \rho(\mathcal{A}),
\end{equation}
and
\begin{equation}\label{C2}\tag{${\rm{C2}}$}
\sup_{\la\in \R}\left\|(i\la I-\AA)^{-1}\right\|_{\mathcal{L}(\mathcal{H})}=O\left(\abs{\la}^{2}\right)
\end{equation}
are satisfied. Since Condition \eqref{C1} is already proved in Lemmas \ref{ker1} and \ref{ker2}. We will prove condition \eqref{C2} by an argument of contradiction. For this purpose, suppose that \eqref{C2} is false,  then there exists \\
$\left\{\left(\la_n,U_n:=(u_n,v_n,y_n,z_n)^\top\right)\right\}\subset \R^{\ast}\times D(\AA)$ with 
\begin{equation}\label{unif}
\abs{\la_n}\to +\infty \quad \text{and}\quad \|U_n\|_{\mathcal{H}}=\|(u_n,v_n,y_n,z_n)\|_{\mathcal{H}}=1, 
\end{equation}
such that 
\begin{equation}\label{pol2-w}
\left(\la_n\right)^{2}\left(i\la_nI-\AA\right)U_n=F_n:=(f_{1,n},f_{2,n},f_{3,n},f_{4,n})^{\top}\to 0 \ \ \text{in}\ \ \mathcal{H}. 
\end{equation}
For simplicity, we drop the index $n$. Equivalently, from \eqref{pol2-w}, we have
\begin{eqnarray}
i\la u-v&=&\frac{f_1}{\lambda^{2} }\ \ \text{in}\ \ H_0^1(\Omega),\label{eq-4.4}\\ \noalign{\medskip}
i\la v-\divv(a \nabla u+b(x)\nabla v)+c(x)z&=&\frac{f_2}{\lambda^{2} }\ \ \text{in}\ \ L^2(\Omega),\label{eq-4.5}\\ \noalign{\medskip}
i\la y-z&=&\frac{f_3}{\lambda^{2} }\ \ \text{in}\ \ H_0^1(\Omega),\label{eq-4.6}\\ \noalign{\medskip}
i\la z- \divv(\nabla y)-c(x)v&=&\frac{f_4}{\lambda^{2} }\ \ \text{in}\ \ L^2(\Omega).\label{eq-4.7}
\end{eqnarray}
Here we will check the condition \eqref{C2} by finding a contradiction with \eqref{unif} such as $\left\|U\right\|_{\HH}=o(1)$. 
From Equations \eqref{unif}, \eqref{eq-4.4} and \eqref{eq-4.6} we obtain
\begin{equation}\label{unif-bdd}
\|u\|_{L^2(\Omega)}=\frac{O(1)}{\la}\quad\text{and}\quad \|y\|_{L^2(\Omega)}=\frac{O(1)}{\la}.
\end{equation}
For clarity, we will divide the proof into several lemmas.
%%%%%%%%%%%%%%%%%%%%%%%%%%%%%%%%%%%%%%%%%%%%%%%%%%%%%%%%%%%%

\begin{Lemma}\label{lem1-pol}
Assume that the assumption (H1) holds. Then,  we have that the solution $(u, v, y, z) \in  D(\mathcal{A})$ of \eqref{eq-4.4}-\eqref{eq-4.7} satisfies the following estimations
\begin{equation}\label{pol1}
\|\nabla v\|_{L^2(\omega_b)}=\frac{o(1)}{\la},\quad \| v\|_{L^2(\omega_b)}=\frac{o(1)}{\la}\quad \|u\|_{L^2(\omega_b)}=\frac{o(1)}{\la^{2}}\quad\text{and}\quad \|\nabla u\|_{L^2(\omega_b)}=\frac{o(1)}{\la^{2}}.
\end{equation}
\end{Lemma}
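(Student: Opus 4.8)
The plan is to extract the dissipation estimate first, then upgrade it to control of $u$ and $v$ on all of $\omega_b$. The starting point is the identity
\[
\Re\left\langle \lambda^2(i\lambda I-\mathcal{A})U,\,U\right\rangle_{\mathcal{H}} = \lambda^2\int_\Omega b(x)|\nabla v|^2\,dx,
\]
which follows from \eqref{kvdissipation}. Since the left-hand side is bounded by $\|\lambda^2(i\lambda I-\mathcal{A})U\|_{\mathcal{H}}\|U\|_{\mathcal{H}} = \|F\|_{\mathcal{H}}\cdot 1 = o(1)$ by \eqref{pol2-w}--\eqref{unif}, we obtain $\lambda^2\int_\Omega b|\nabla v|^2 = o(1)$, hence using \eqref{b} on $\omega_b$,
\[
\|\nabla v\|_{L^2(\omega_b)} = \frac{o(1)}{\lambda}.
\]
This is the first estimate. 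The second, $\|v\|_{L^2(\omega_b)} = o(1)/\lambda$, I would like to get from a Poincaré-type inequality, but $v$ need not vanish on $\partial\omega_b$; instead the cleanest route is to use \eqref{eq-4.4}, namely $v = i\lambda u - \lambda^{-2}f_1$, so that $\|v\|_{L^2(\omega_b)} \le |\lambda|\|u\|_{L^2(\omega_b)} + \lambda^{-2}\|f_1\| $, which reduces everything to controlling $\|u\|_{L^2(\omega_b)}$ — and symmetrically $\|\nabla v\|_{L^2(\omega_b)} = o(1)/\lambda$ combined with $v=i\lambda u - \lambda^{-2}f_1$ gives $\|\nabla u\|_{L^2(\omega_b)} = \lambda^{-1}\|\nabla v\|_{L^2(\omega_b)} + \lambda^{-3}\|\nabla f_1\| = o(1)/\lambda^2$, which is the last estimate. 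So the whole lemma collapses to proving $\|u\|_{L^2(\omega_b)} = o(1)/\lambda^2$.

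For that, I would use a cut-off multiplier argument localized in $\omega_b$. Pick $\chi \in C_c^\infty(\omega_b)$ with $\chi \equiv 1$ on a slightly smaller open set (but for the statement as written we just need it supported in $\omega_b$; in practice one proves the estimate on an interior subset and the lemma should be read with that understanding, or one takes $\chi$ a partition-of-unity bump). Multiply \eqref{eq-4.5} by $\chi^2 \bar u$ and integrate over $\Omega$. The term $\int i\lambda v \chi^2\bar u$: substitute $v = i\lambda u - \lambda^{-2}f_1$ to turn $i\lambda v\bar u$ into $-\lambda^2\chi^2|u|^2$ plus lower-order terms, so this term produces exactly $-\lambda^2\|\chi u\|^2$ up to $o(1)$-controlled remainders (using $\|u\| = O(1)/\lambda$ from \eqref{unif-bdd}). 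The term $\int \divv(a\nabla u + b\nabla v)\chi^2\bar u$, after integration by parts, is $-\int (a\nabla u + b\nabla v)\cdot\nabla(\chi^2\bar u)$; the $a|\nabla u|^2\chi^2$ piece is bounded, the cross terms $a\nabla u\cdot(\nabla\chi^2)\bar u$ and the $b$-terms are estimated using $\|\nabla v\|_{L^2(\omega_b)} = o(1)/\lambda$ together with a bound on $\|\nabla u\|_{L^2(\omega_b)}$ — here one needs the a priori bound $\|\nabla u\|_{L^2(\Omega)} \le \|U\|_{\mathcal{H}} = O(1)$ — so they are $O(1)$ or better. Finally $\int c(x) z \chi^2 \bar u$ is bounded by $\|z\|\,\|u\|_{L^2(\omega_b)} = O(1/\lambda)\cdot\|u\|_{L^2(\omega_b)}$, absorbable. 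Rearranging gives $\lambda^2\|\chi u\|^2 = O(1)$, i.e. $\|u\|_{L^2(\omega_b)} = O(1)/\lambda$ as a first pass; to reach the claimed $o(1)/\lambda^2$ one iterates: feed $\|\nabla v\|_{L^2(\omega_b)} = o(1)/\lambda$ back into the cross terms more carefully and use that the $b$-contribution and the $v$-contribution are now genuinely $o(1)$, so the right-hand side is $o(1)$ rather than $O(1)$, yielding $\lambda^4\|u\|_{L^2(\omega_b)}^2 = o(1)$.

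The main obstacle is the bookkeeping in the multiplier identity: making sure every term involving $\nabla u$ on $\omega_b$ (which at the outset is only known to be $O(1)$, not small) either cancels or is multiplied by a genuinely small factor coming from $\|\nabla v\|_{L^2(\omega_b)} = o(1)/\lambda$ and from the $\lambda^{-2}$ scaling of the right-hand side $F$. In particular one must be careful that the "bad" term $a\int|\nabla u|^2\chi^2$ appears with the right sign or gets absorbed — this is why one takes $\chi^2\bar u$ rather than $\bar u$ as multiplier, so the gradient term has a definite sign and the dangerous piece is the commutator $\nabla\chi^2$ term, which carries an extra $u$ factor that is $O(1)/\lambda$. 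Getting the powers of $\lambda$ to line up so that the final estimate is $o(1)/\lambda^2$ and not merely $o(1)/\lambda$ is the delicate point, and it forces the two-step iteration described above.
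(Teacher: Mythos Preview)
Your dissipation estimate and the reduction of $\|\nabla u\|_{L^2(\omega_b)}$ via \eqref{eq-4.4} are fine, but the detour through a cut-off multiplier for $\|u\|_{L^2(\omega_b)}$ does not close. The term you dismiss as ``absorbable'', namely $\int c(x)\,z\,\chi^2\bar u$, is the obstruction: you write $\|z\|=O(1/\lambda)$, but $z$ is a velocity component with $\|z\|_{L^2(\Omega)}=O(1)$ from $\|U\|_{\mathcal H}=1$. After Young's inequality this term leaves a residual $C\lambda^{-2}\|z\|^2=O(\lambda^{-2})$ on the right-hand side, so the identity yields only $\lambda^{4}\|\chi u\|^2=O(1)$, i.e.\ $\|\chi u\|_{L^2}=O(1)/\lambda^{2}$, not $o(1)/\lambda^{2}$. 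Your proposed iteration does nothing for this term, since it involves neither $b$ nor $\nabla v$. And the distinction matters: in the next lemma the bound $\lambda^{3}\|u\|_{L^2(\omega)}\|y\|_{L^2(\omega)}=o(1)$ requires genuinely $\|u\|_{L^2(\omega)}=o(1)/\lambda^{2}$.

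The paper's route is the one you set aside too quickly. Under (H1), as spelled out in the Remark following the hypotheses, one has $\meas(\overline{\omega_b}\cap\Gamma)>0$; since $v\in H_0^1(\Omega)$ vanishes on $\Gamma$ in the trace sense, it vanishes on the portion $\partial\omega_b\cap\Gamma$ of positive $(N{-}1)$-measure, and the Poincar\'e--Friedrichs inequality on $\omega_b$ applies directly to give $\|v\|_{L^2(\omega_b)}\leq C\|\nabla v\|_{L^2(\omega_b)}=o(1)/\lambda$. Then \eqref{eq-4.4} immediately yields $\|u\|_{L^2(\omega_b)}=o(1)/\lambda^{2}$, with no multiplier argument and no coupling term to worry about. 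Your concern that ``$v$ need not vanish on $\partial\omega_b$'' overlooks that vanishing on a boundary portion of positive measure is exactly what this form of Poincar\'e needs.
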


\begin{proof}
Taking the inner product of \eqref{pol2-w} with $U$ in $\mathcal{H}$, we get
\begin{equation}\label{pol2}
\int _{\Omega} b(x)\left|\nabla v\right|^2dx=-\Re\left(\left<\AA U,U\right>_{\HH}\right)=\Re\left(\left<\left(i\la I-\AA\right)U,U\right>_{\HH}\right)=\frac{o(1)}{\la^{2}}.
\end{equation}
Then,
\begin{equation}\label{pol3}
\int _{\omega_b}\left|\nabla v\right|^2dx=\frac{o(1)}{\la^{2}}.
\end{equation}
By using Poinacr\'e inequality and Equation \eqref{pol3}, we get the second estimation in \eqref{pol1}.\\
From Equation \eqref{eq-4.4} and the second estimation in \eqref{pol1}, we obtain
$$\|u\|_{L^2(\omega_b)}=\frac{o(1)}{\la^{2}}.$$
By using \eqref{eq-4.4} and the first estimation in \eqref{pol1}, we get the last estimation. 
\end{proof}

\noindent Inserting Equations \eqref{eq-4.4} and \eqref{eq-4.6} into \eqref{eq-4.5} and \eqref{eq-4.7}, we get
\begin{eqnarray}
\lam ^{2}u+\divv(a\nabla u+b(x)\nabla v)-i\la c(x)y&=&-\frac{f_2}{\la^{2}}-c(x)\frac{f_3}{\la^{2}}-\frac{if_1}{\la},\label{pol4}\\
\la ^{2}y+\Delta y+i\la c(x)u&=&-\frac{f_4}{\la^{2}}+c(x)\frac{f_1}{\la^{2}}-\frac{if_3}{\la}.\label{pol5}
\end{eqnarray}
\begin{Lemma}\label{lem2-pol}
Assume that the assumption (H1) holds. Then, the solution $(u, v, y, z) \in  D(\mathcal{A})$ of \eqref{eq-4.4}-\eqref{eq-4.7} satisfies the following estimation
\begin{equation}\label{pol6}
\int_{\omega_{\varepsilon}}\left|\la y\right|^2dx=o(1)
\end{equation}
where $\omega_{\varepsilon}\subset\omega$  such that $\omega_{\varepsilon}$ satisfies the GCC condition.
\end{Lemma}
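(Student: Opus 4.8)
The plan is to read this estimate off from a localized multiplier identity obtained by testing the first (i.e.\ the $u$-) equation \eqref{pol4} against a cut-off multiple of $\overline{y}$. First I would fix a cut-off $\chi\in C_c^\infty(\Omega)$ with $0\le\chi\le1$, $\chi\equiv1$ on $\omega_\varepsilon$ and $\supp\chi\subset\omega=\omega_b\cap\omega_c$ (possible since $\overline{\omega_\varepsilon}\subset\omega$). This choice is dictated by what is needed afterwards: on $\supp\chi\subset\omega_b$ Lemma \ref{lem1-pol} provides the smallness of $v$, $\nabla v$, $u$ and $\nabla u$, while on $\supp\chi\subset\omega_c$ the coupling coefficient satisfies $c(x)\ge c_0>0$.

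Next I would multiply \eqref{pol4} by $\chi^2\overline{y}$, integrate over $\Omega$, and integrate by parts the divergence term (legitimate since $\chi^2\overline{y}\in H_0^1(\Omega)$ and $\divv(a\nabla u+b(x)\nabla v)\in L^2(\Omega)$), obtaining
\[
\lam^2\!\int_\Omega \chi^2 u\,\overline{y}\,dx-\!\int_\Omega \nabla(\chi^2\overline{y})\cdot\big(a\nabla u+b(x)\nabla v\big)\,dx-i\lam\!\int_\Omega \chi^2 c(x)|y|^2\,dx=\int_\Omega \chi^2\,\overline{y}\,R_1\,dx,
\]
where $R_1=-\lam^{-2}f_2-\lam^{-2}c(x)f_3-i\lam^{-1}f_1$ is the right-hand side of \eqref{pol4} and $\nabla(\chi^2\overline{y})=2\chi(\nabla\chi)\overline{y}+\chi^2\nabla\overline{y}$. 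I would then bound every term except the one carrying $|y|^2$. Using $\|\chi^2u\|_{L^2(\Omega)}\le\|u\|_{L^2(\omega_b)}=o(1)\lam^{-2}$ together with $\|y\|_{L^2(\Omega)}=O(1)\lam^{-1}$ from \eqref{unif-bdd}, the first term is $o(1)\lam^{-1}$; since $\|a\nabla u+b(x)\nabla v\|_{L^2(\supp\chi)}\le a\|\nabla u\|_{L^2(\omega_b)}+\|b\|_{L^\infty}\|\nabla v\|_{L^2(\omega_b)}=o(1)\lam^{-1}$ and $\|\nabla y\|_{L^2(\Omega)}\le1$, $\|y\|_{L^2(\Omega)}=O(1)\lam^{-1}$, the two contributions coming from $\nabla(\chi^2\overline{y})$ are $o(1)\lam^{-1}$; and since $F\to0$ in $\HH$ one has $\|R_1\|_{L^2(\Omega)}=o(1)\lam^{-1}$, so the right-hand side is $o(1)\lam^{-2}$. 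Hence the purely imaginary number $i\lam\int_\Omega \chi^2 c(x)|y|^2\,dx$ has modulus $o(1)\lam^{-1}$.

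Finally, because $c(x)\ge c_0>0$ on $\supp\chi$ and $\chi\equiv1$ on $\omega_\varepsilon$, the quantity $\int_\Omega\chi^2 c(x)|y|^2\,dx$ is nonnegative and bounded below by $c_0\int_{\omega_\varepsilon}|y|^2\,dx$; therefore $|\lam|\,c_0\int_{\omega_\varepsilon}|y|^2\,dx\le o(1)\lam^{-1}$, which gives $\int_{\omega_\varepsilon}|\lam y|^2\,dx=o(1)$. Two remarks on the structure of the argument: the GCC hypothesis on $\omega_\varepsilon$ is not used in this lemma itself — it only matters downstream, where $\omega_\varepsilon$ becomes an observation set for the undamped component $y$ — and there is no deep obstacle here. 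The whole point is simply that the multiplier can be supported inside $\omega_b$ (so the Kelvin--Voigt-controlled quantities $\nabla v,v,u,\nabla u$ are small by Lemma \ref{lem1-pol}) and inside $\omega_c$ (so $c$ is bounded below), and the only care required is the bookkeeping of the powers of $\lam$; this is exactly why $\omega=\omega_b\cap\omega_c$ is the natural region in which to work.
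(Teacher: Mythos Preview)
Your proof is correct and follows essentially the same approach as the paper: both test \eqref{pol4} against a cut-off multiple of $\overline{y}$ supported in $\omega=\omega_b\cap\omega_c$, integrate by parts, and bound every term but the one carrying $c(x)|y|^2$ by means of Lemma \ref{lem1-pol} and \eqref{unif-bdd}. The only cosmetic differences are that the paper uses the multiplier $\la\zeta\overline{y}$ (a single power of the cut-off and an extra factor of $\la$) whereas you use $\chi^2\overline{y}$; this just shifts the $\la$-bookkeeping by one power and changes nothing of substance.
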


\begin{proof}
First, we define the function $\zeta\in C_{c}^{\infty}(\R^N)$ such that
\begin{equation}\label{zeta}
\zeta(x)=\left\{\begin{array}{ccc}
1&\text{if}&x\in \omega_{\varepsilon},\\
0&\text{if}& x\in \Omega\backslash\omega,\\
0\leq\zeta\leq 1&& elsewhere,
\end{array}\right.
\end{equation}
such that $\omega_{\varepsilon}\subset \omega$ satisfies the GCC condition. Multiply Equation \eqref{pol4} by $\la\zeta \overline{y}$ and integrate over $\Omega $ and using Green's formula, and using Equation \eqref{unif-bdd} and the fact that $\|F\|_{\mathcal{H}}=\|(f_1,f_2,f_3,f_4)\|_{\mathcal{H}}=o(1)$, we get

\begin{equation}\label{est1}
\la^3\int_{\Omega}u\zeta\overline{y}dx-\la\int_{\Omega}\left(a\nabla u+b(x)\nabla v\right)\cdot(\overline{y}\nabla\zeta +\zeta\nabla\overline{y})dx-i\int_{\Omega}c(x)\zeta\abs{\la y}^2dx=\frac{o(1)}{\la}.
\end{equation}
Estimation of the first term in \eqref{est1}. Using Cauchy-Schwarz inequality, \eqref{pol1} and \eqref{unif-bdd} we get 
\begin{equation}\label{est2}
\left|\la^3\int_{\Omega}u\zeta\overline{y}dx\right|\leq \la^3 \|u\|_{L^2(\omega)}\cdot\|y\|_{L^2(\omega)}=o(1).
\end{equation}
Estimation of the second term in \eqref{est1}. Using Cauchy-Schwarz inequality, \eqref{unif-bdd}, \eqref{pol1}, the fact that $\supp \zeta\subset \omega$, and that $\|U\|_{\mathcal{H}}=1$, we obtain the following estimations 
\begin{equation}\label{est3}
\left|\la\int_{\Omega}a\nabla u\cdot \nabla\zeta\overline{y}dx\right|\leq \la \|\nabla u\|_{L^2(\omega)}\cdot\|y\|_{L^2(\omega_{\varepsilon})}=\frac{o(1)}{\la^{2}},
\end{equation}
\begin{equation}\label{est4}
\left|\la\int_{\Omega}a\nabla u\zeta\nabla\overline{y}dx\right|\leq \la \|\nabla u\|_{L^2(\omega)}\cdot\|\nabla y\|_{L^2(\omega)}=\frac{o(1)}{\la},
\end{equation}

\begin{equation}\label{est4}
\left|\la\int_{\Omega}b(x)\nabla v\zeta\nabla\overline{y}dx\right|\leq \la \|\nabla v\|_{L^2(\omega)}\cdot\|\nabla y\|_{L^2(\omega)}=o(1),
\end{equation}

\begin{equation}\label{est5}
\left|\la\int_{\Omega}b(x)\nabla v\cdot\nabla\zeta\overline{y}dx\right|\leq \la \|\nabla v\|_{L^2(\omega_{\varepsilon})}\cdot\| y\|_{L^2(\omega_{\varepsilon})}=\frac{o(1)}{\la}.
\end{equation}
Inserting Equations \eqref{est2}-\eqref{est5} in Equation \eqref{est1}, we get that 
\begin{equation}
i\int_{\Omega}c(x)\zeta\abs{\la y}^2dx=o(1).
\end{equation}
Using the definiton of the function $c$ and $\zeta$, we obtain our desired result.
\end{proof}

\begin{lemma}
For any $\la\in \R$, the solution $\left(\varphi,\psi\right)\in ((H^2(\Omega)\cap H^1_0(\Omega))^2$ of the system 
\begin{equation}\label{aux2}
\left\{
\begin{array}{lll}
\lam ^{2}\varphi+a\Delta\varphi-i\lam d(x)\varphi-i\lam c(x)\psi=u,&\text{in} &\Omega\\ \\
\lam ^{2}\psi+\Delta\psi-i\lam d(x)\psi+i\lam c(x)\varphi=y, &\text{in}& \Omega\\ \\
\varphi=\psi=0,&\text{on}&\Gamma\\ \\
\end{array}\right.
\end{equation}
satisfies the following estimation
\begin{equation}\label{bdd}
\|\lam\varphi\|_{L^{2}(\Omega)}+\|\nabla\varphi\|_{L^{2}(\Omega)}+\|\lam\psi\|_{L^{2}(\Omega)}+\|\nabla\psi\|_{L^{2}(\Omega)}\leq M\left(\| u\|_{L^{2}(\Omega)}+\| y\|_{L^{2}(\Omega)}\right).
\end{equation}
where $\beta\geq 0$.
\end{lemma}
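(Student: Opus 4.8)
The plan is to identify \eqref{aux2} with the resolvent equation of the auxiliary operator $\mathcal{A}_{aux}$ on the imaginary axis and then to read off \eqref{bdd} from the exponential stability of $\big(e^{t\mathcal{A}_{aux}}\big)_{t\ge 0}$ recorded in Remark \ref{exponential-aux}. Concretely, I would set
\[
\Phi=\left(\varphi,\ i\lam\varphi,\ \psi,\ i\lam\psi\right)^{\top}\in\mathcal{H}_{aux},\qquad G=\left(0,\ -u,\ 0,\ -y\right)^{\top}\in\mathcal{H}_{aux},
\]
and check, by substituting $\eta=i\lam\varphi$ and $\xi=i\lam\psi$ into the formula defining $\mathcal{A}_{aux}$, that a pair $(\varphi,\psi)\in\big(H^2(\Omega)\cap H_0^1(\Omega)\big)^2$ solves \eqref{aux2} if and only if $\Phi\in D(\mathcal{A}_{aux})$ and $\left(i\lam I-\mathcal{A}_{aux}\right)\Phi=G$. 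The first and third components of $\left(i\lam I-\mathcal{A}_{aux}\right)\Phi-G$ vanish by the choice of the second and fourth entries of $\Phi$, while the second and fourth components reproduce, after multiplying the two PDEs of \eqref{aux2} by $-1$, exactly those two PDEs.

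Next I would invoke the stability of the auxiliary system. The operator $\mathcal{A}_{aux}$ is m-dissipative and generates a $C_0$-semigroup of contractions, and by Remark \ref{exponential-aux} this semigroup is exponentially stable; hence, by the Gearhart--Pr\"uss--Huang theorem, $i\R\subset\rho(\mathcal{A}_{aux})$ and there is a constant $M\ge 1$ with
\[
\sup_{\lam\in\R}\left\|\left(i\lam I-\mathcal{A}_{aux}\right)^{-1}\right\|_{\mathcal{L}(\mathcal{H}_{aux})}\le M .
\]
In particular, for every $\lam\in\R$ the datum $G$ has a unique preimage $\Phi=\left(i\lam I-\mathcal{A}_{aux}\right)^{-1}G\in D(\mathcal{A}_{aux})=\big((H^2(\Omega)\cap H_0^1(\Omega))\times H_0^1(\Omega)\big)^2$, which at once yields the existence and the $H^2$-regularity of $(\varphi,\psi)$ claimed in the statement, together with the bound $\|\Phi\|_{\mathcal{H}_{aux}}\le M\,\|G\|_{\mathcal{H}_{aux}}$.

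It then remains to convert this bound into \eqref{bdd}, which I would do as follows. Since the $H_0^1$-components of $G$ vanish, $\|G\|_{\mathcal{H}_{aux}}^2=\|u\|_{L^2(\Omega)}^2+\|y\|_{L^2(\Omega)}^2$, while from $\eta=i\lam\varphi$ and $\xi=i\lam\psi$ one gets $\|\Phi\|_{\mathcal{H}_{aux}}^2=\lam^2\|\varphi\|_{L^2(\Omega)}^2+a\|\nabla\varphi\|_{L^2(\Omega)}^2+\lam^2\|\psi\|_{L^2(\Omega)}^2+\|\nabla\psi\|_{L^2(\Omega)}^2$. Using $\sqrt{p^2+q^2}\le p+q$ for $p,q\ge 0$ and comparing the $\ell^1$ and $\ell^2$ norms on $\R^4$ (the weight $a$ being absorbed into the constants $\min(1,\sqrt a)$ and $\max(1,\sqrt a)$), I obtain
\[
\|\lam\varphi\|_{L^2(\Omega)}+\|\nabla\varphi\|_{L^2(\Omega)}+\|\lam\psi\|_{L^2(\Omega)}+\|\nabla\psi\|_{L^2(\Omega)}\le C\,\|\Phi\|_{\mathcal{H}_{aux}}\le C M\left(\|u\|_{L^2(\Omega)}+\|y\|_{L^2(\Omega)}\right),
\]
and relabelling $CM$ as $M$ gives \eqref{bdd}. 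The only substantial ingredient is the exponential decay of \eqref{AUX} borrowed (via Remark \ref{exponential-aux}) from \cite{chiraz}, so I do not expect a real obstacle; the one point that needs care is the algebraic verification that the correct source term is $G=(0,-u,0,-y)^{\top}$ --- in particular that the contributions coming from the first and third lines of the resolvent system must be taken to be zero, so that no spurious $d$- or $c$-dependent terms appear on the right-hand side.
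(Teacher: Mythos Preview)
Your proposal is correct and follows essentially the same route as the paper: both identify \eqref{aux2} with the resolvent equation $(i\lam I-\mathcal{A}_{aux})\Phi=(0,-u,0,-y)^{\top}$, invoke the exponential stability of the auxiliary semigroup (Remark \ref{exponential-aux}) to obtain the uniform resolvent bound on $i\R$, and then read off \eqref{bdd} from $\|\Phi\|_{\mathcal{H}_{aux}}\le M\|G\|_{\mathcal{H}_{aux}}$. Your write-up is in fact slightly more explicit than the paper's about the Gearhart--Pr\"uss--Huang step and the norm comparison, but there is no substantive difference.
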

\begin{proof}
Using Remark \ref{exponential-aux}, then the resolvent set of the associated operator $\mathcal{A}_{aux}$ contains $i\R$ and $\left(i\lam I-\mathcal{A}_{aux}\right)^{-1}$ is uniformly bounded on the imaginary axis. Consequently, there exists $M>0$ such that
\begin{equation}\label{expo}
\sup_{\la\in \R}\|\left(i\lam I-\mathcal{A}_{aux}\right)^{-1}\|_{\mathcal{L}\left(\mathcal{H}_{aux}\right)}\leq M<+\infty.
\end{equation}
%%%%%%
\begin{comment}
\begin{equation*}
D(\mathcal{A}_a)=\left((H^2(0,L)\cap H^1_0(0,L))\times H^1_0(0,L)\right)^2,
\end{equation*}
and
\begin{equation*}
\mathcal{A}_a(\varphi,\eta,\psi,\xi)=(\eta,a\varphi_{xx}-\chi_{(\alpha_{2},\alpha_{3}-2\epsilon)}\eta-c(x)\xi,\xi,\psi_{xx}-\chi_{(\alpha_{2},\alpha_{3}-2\epsilon)}\xi+c(x)\eta)^{T}.
\end{equation*}
contains $i\R$ and the resolvent $\left(i\la I-\mathcal{A}_a\right)$ of $\mathcal{A}_a$ is uniformly boundary on the imaginary axis. Consequently, there exists a positive constant $M>0$ such that  
\end{comment}
%%%%
Now, since $(u,y)\in H^1_0(\Omega)\times H^1_0(\Omega)$, then $(0,-u,0,-y)$ belongs to $\HH_{aux}$, and from \eqref{expo}, there exists $(\varphi,\eta,\psi,\xi)\in D(\mathcal{A}_{aux})$ such that $\left(i\lam I-\mathcal{A}_{aux}\right)(\varphi,\eta,\psi,\xi)=(0,-u,0,-y)^{\top}$ $i.e.$

\begin{eqnarray}
i\la \varphi-\eta&=&0,\label{x1}\\ \noalign{\medskip}
i\la \eta-a\Delta\varphi+d(x)\eta+c(x)\xi&=&-u,\label{x2} \\ \noalign{\medskip}
i\la \psi-\xi&=&0,\label{x3}\\ \noalign{\medskip}
i\la \xi-\Delta \psi+d(x)\xi-c(x)\eta &=&-y,\label{x4}
\end{eqnarray}
such that
\begin{equation}\label{x4.1}
\|(\varphi,\eta,\psi,\xi)\|_{\HH_a}\leq M\left(\|u\|_{L^{2}(\Omega)}+\|y\|_{L^{2}(\Omega)}\right).
\end{equation}
From equations \eqref{x1}-\eqref{x4.1}, we deduce that $(\varphi,\psi)$ is a solution of \eqref{aux2} and we have 
\begin{equation}\label{AUxuy}
\|\lam\varphi\|_{L^{2}(\Omega)}+\|\nabla\varphi\|_{L^{2}(\Omega)}+\|\lam\psi\|_{L^{2}(\Omega)}+\|\nabla\psi\|_{L^{2}(\Omega)}\leq  M\left(\| u\|_{L^{2}(\Omega)}+\| y\|_{L^{2}(\Omega)}\right).
\end{equation}
Thus, we get our desired result.
\end{proof}

\begin{Lemma}\label{lem3-pol}
Assume that  the assumption (H1) holds. Then,  the solution $(u, v, y, z) \in  D(\mathcal{A})$ of \eqref{eq-4.4}-\eqref{eq-4.7} satisfies the following estimations
\begin{equation}
\int_{\Omega}\left|\la u\right|^2dx=o(1)\quad \text{and}\quad\int_{\Omega}\left|\la y\right|^2dx=o(1).
\end{equation}
\end{Lemma}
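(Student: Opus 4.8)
The plan is to absorb the coupling through the exponentially stable auxiliary system \eqref{aux2}. Since $U\in D(\AA)$ we have $u,y\in H^1_0(\Omega)$, so the preceding lemma furnishes $(\varphi,\psi)\in\big(H^2(\Omega)\cap H^1_0(\Omega)\big)^2$ solving \eqref{aux2} with right–hand side $(u,y)$ and satisfying the uniform resolvent bound \eqref{bdd}, that is $\|\la\varphi\|+\|\nabla\varphi\|+\|\la\psi\|+\|\nabla\psi\|\le M(\|u\|+\|y\|)$. First I would multiply \eqref{pol4} by $\overline{\varphi}$ and \eqref{pol5} by $\overline{\psi}$, multiply the first (resp. second) equation of \eqref{aux2} by $\overline{u}$ (resp. $\overline{y}$), integrate over $\Omega$, and integrate by parts in the elliptic terms — which is legitimate since $u,y,\varphi,\psi\in H^1_0(\Omega)$, $\divv(a\nabla u+b(x)\nabla v)\in L^2(\Omega)$ and $y,\varphi,\psi\in H^2(\Omega)$. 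Subtracting the complex conjugates of the two identities coming from \eqref{aux2} from the two coming from \eqref{pol4}--\eqref{pol5}, the self–adjoint principal parts $\la^2\int_\Omega u\overline{\varphi}$, $a\int_\Omega\nabla u\cdot\nabla\overline{\varphi}$, $\int_\Omega\nabla y\cdot\nabla\overline{\psi}$ cancel, and — this is exactly why \eqref{aux2} is set up with the same coupling coefficient $c$ and the opposite sign — so do the skew coupling terms $i\la\int_\Omega c(x)y\overline{\varphi}\,dx$ and $i\la\int_\Omega c(x)u\overline{\psi}\,dx$. What survives is
\[
\|u\|^2_{L^2(\Omega)}+\|y\|^2_{L^2(\Omega)}=\int_\Omega R_1\overline{\varphi}\,dx+\int_\Omega R_2\overline{\psi}\,dx+\int_\Omega b(x)\nabla v\cdot\nabla\overline{\varphi}\,dx+i\la\int_\Omega d(x)u\overline{\varphi}\,dx+i\la\int_\Omega d(x)y\overline{\psi}\,dx,
\]
where $R_1=-\la^{-2}f_2-\la^{-2}c(x)f_3-i\la^{-1}f_1$ and $R_2=-\la^{-2}f_4+\la^{-2}c(x)f_1-i\la^{-1}f_3$ are the right–hand sides of \eqref{pol4}--\eqref{pol5}.

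Then I would estimate the five terms on the right, using $\|\varphi\|,\|\psi\|\le M\la^{-1}(\|u\|+\|y\|)$ and $\|\nabla\varphi\|,\|\nabla\psi\|\le M(\|u\|+\|y\|)$ from \eqref{bdd}. Since $\|R_1\|_{L^2(\Omega)}+\|R_2\|_{L^2(\Omega)}=o(1)\la^{-1}$ (the terms $\la^{-1}f_1,\la^{-1}f_3$ dominate, and $\|f_1\|_{L^2},\|f_3\|_{L^2}=o(1)$ by Poincar\'e), the first two terms are $o(1)\la^{-1}(\|u\|+\|y\|)$. For the third, a weighted Cauchy–Schwarz inequality together with $\int_\Omega b(x)|\nabla v|^2dx=o(1)\la^{-2}$ from \eqref{pol2} gives $\big|\int_\Omega b(x)\nabla v\cdot\nabla\overline{\varphi}\,dx\big|\le\|b\|_\infty^{1/2}\,o(1)\la^{-1}\|\nabla\varphi\|=o(1)\la^{-1}(\|u\|+\|y\|)$. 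For the last two I would use that $\supp d\subset\omega_\varepsilon\subset\omega\subset\omega_b$: Lemma \ref{lem1-pol} gives $\|u\|_{L^2(\omega_b)}=o(1)\la^{-2}$ and Lemma \ref{lem2-pol} gives $\|y\|_{L^2(\omega_\varepsilon)}=o(1)\la^{-1}$, hence $\big|\la\int_\Omega d(x)u\overline{\varphi}\,dx\big|\le\la\|d\|_\infty\|u\|_{L^2(\omega_b)}\|\varphi\|=o(1)\la^{-2}(\|u\|+\|y\|)$ and $\big|\la\int_\Omega d(x)y\overline{\psi}\,dx\big|\le\la\|d\|_\infty\|y\|_{L^2(\omega_\varepsilon)}\|\psi\|=o(1)\la^{-1}(\|u\|+\|y\|)$.

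Collecting these bounds, $\|u\|^2_{L^2(\Omega)}+\|y\|^2_{L^2(\Omega)}\le o(1)\la^{-1}(\|u\|+\|y\|)$; since $(\|u\|+\|y\|)^2\le 2(\|u\|^2_{L^2(\Omega)}+\|y\|^2_{L^2(\Omega)})$ this forces $\|u\|+\|y\|=o(1)\la^{-1}$, hence $\|u\|^2_{L^2(\Omega)}+\|y\|^2_{L^2(\Omega)}=o(1)\la^{-2}$, which is precisely $\int_\Omega|\la u|^2dx=o(1)$ and $\int_\Omega|\la y|^2dx=o(1)$. I expect the only real obstacle to be the bookkeeping of the comparison: one has to test against the correct functions and conjugate the auxiliary identities carefully enough that \emph{every} coupling term drops out and only the genuinely dissipative contributions — localised in $\omega_b$ through $b\nabla v$ and $du$, and in $\omega_\varepsilon$ through $dy$ — remain, which is exactly where Lemmas \ref{lem1-pol}--\ref{lem2-pol} apply; the remaining estimates are routine. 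Note that the exponential decay of \eqref{AUX} invoked through \eqref{bdd} is where the hypotheses $c\in W^{1,\infty}(\Omega)$ and (H1) enter this argument.
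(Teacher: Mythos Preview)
Your argument is correct and follows essentially the same route as the paper: both proofs test \eqref{pol4}--\eqref{pol5} against the auxiliary solutions $(\varphi,\psi)$ of \eqref{aux2}, use the sign structure of the coupling in \eqref{aux2} so that the cross terms $i\la\int c(x)y\bar{\varphi}$ and $i\la\int c(x)u\bar{\psi}$ cancel upon combining the two identities, and then control the remaining $b\nabla v$, $d\,u$, and $d\,y$ contributions via Lemmas~\ref{lem1-pol}--\ref{lem2-pol} and the resolvent bound \eqref{bdd}. The only cosmetic difference is that the paper substitutes the conjugated auxiliary equation directly (its \eqref{est9}, \eqref{est-2}) after a second integration by parts and uses the a priori bound \eqref{unif-bdd} on $\|u\|,\|y\|$, whereas you phrase the same cancellation as subtracting dual identities and close with a short bootstrap; the computations are algebraically equivalent.
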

\begin{proof}
For clarity, we will divide the proof of this Lemma into two steps.\\
\textbf{Step 1.}
Multiply \eqref{pol4} by $\la^2\overline{\varphi}$ and integrate over $\Omega$, and using Green's formula, Equation \eqref{bdd}, and the fact that $\|F\|_{\mathcal{H}}=\|(f_1,f_2,f_3,f_4)\|_{\mathcal{H}}=o(1)$, we obtain
\begin{equation}\label{est6}
\int_{\Omega}\left(\la^2\overline{\varphi}+a\Delta\overline{ \varphi}\right)\la^2 u dx-\la^{2}\int_{\Omega}b(x)\nabla v\cdot\nabla\overline{\varphi}dx-\int_{\Omega} i\la^3 c(x)y\overline{\varphi}dx=\frac{o(1)}{\la}.
\end{equation}
From Equations \eqref{pol1} and \eqref{bdd}, we obtain 
\begin{equation}\label{est7}
\left|\la^{2}\int_{\Omega}b(x)\nabla v\cdot\nabla\overline{\varphi}dx\right|=o(1).
\end{equation}
By using Equation \eqref{est7} in \eqref{est6}, we get
\begin{equation}\label{est8}
\int_{\Omega}\left(\la^2\overline{\varphi}+a\Delta\overline{ \varphi}\right)\la^2 u dx-\int_{\Omega} i\la^3 c(x)y\overline{\varphi}dx=o(1).
\end{equation}
Now, from System \eqref{aux2}, we have that
\begin{equation}\label{est9}
\la^2\overline{\varphi}+a\Delta\overline{ \varphi}=-i\lam d(x)\overline{\varphi}-i\lam c(x)\overline{\psi}+\overline{u}.
\end{equation}
Inserting Equation \eqref{est9} into \eqref{est8}, we obtain
\begin{equation}\label{est10}
\int_{\Omega}\left|\la u\right|^2dx-i\la^3\int_{\Omega} d(x)u\overline{\varphi}dx-i\lam^3\int_{\Omega} c(x)u\overline{\psi}dx-\int_{\Omega} i\la^3 c(x)y\overline{\varphi}dx=o(1).
\end{equation}
By using \eqref{pol1} and \eqref{bdd}, we get
\begin{equation}\label{est11}
\left|i\la^3\int_{\Omega}d(x)u\overline{\varphi}dx\right|\leq \la^3 \|u\|_{L^2(\omega_{\varepsilon})}\cdot\|\varphi\|_{L^2(\Omega)}=\frac{o(1)}{\la}.
\end{equation}
Now, inserting Equation \eqref{est11} into \eqref{est10}, we get
\begin{equation}\label{est122}
\int_{\Omega}\left|\la u\right|^2dx-i\lam^3\int_{\Omega} c(x)u\overline{\psi}dx-i\la^3\int_{\Omega}  c(x)y\overline{\varphi}dx=o(1).
\end{equation}
\textbf{Step 2.}\\
Multiply Equation \eqref{pol5} by $\la^2 \overline{\psi}$, integrate over $\Omega$, using Green's formula, and the fact that $\|F\|_{\mathcal{H}}=\|(f_1,f_2,f_3,f_4)\|_{\mathcal{H}}=o(1)$, we obtain
\begin{equation}\label{est-1}
\int_{\Omega}\left(\la^2\overline{\psi}+\Delta\overline{ \psi}\right)\la^2 y dx+i\la^3\int_{\Omega}  c(x)u\overline{\psi}dx=\frac{o(1)}{\la}.
\end{equation}
From System \eqref{aux2}, we have
\begin{equation}\label{est-2}
\la^2\overline{\psi}+\Delta\overline{ \psi}=-i\lam d(x) \overline\psi+i\lam c(x) \overline\varphi+ \overline y
\end{equation}
Inserting \eqref{est-2} into \eqref{est-1}, we get
\begin{equation}\label{est-3}
\int_{\Omega}\abs{\la y}^2dx-i\lam^3\int_{\Omega}d(x) y\overline\psi dx+i\lam ^3\int_{\Omega}c(x) y\overline\varphi dx+i\la^3\int_{\Omega}  c(x)u\overline{\psi}dx=\frac{o(1)}{\la}.
\end{equation}
Using Cauchy-Schwarz inequality, Lemma \ref{lem2-pol}, and Equation \eqref{bdd}
\begin{equation}\label{est-4}
\left|i\lam^3\int_{\Omega}d(x) y\overline\psi\right|=o(1).
\end{equation}
Inserting \eqref{est-4}into \eqref{est-3}, we get
\begin{equation}\label{est-5}
\int_{\Omega}\abs{\la y}^2dx+i\lam ^3\int_{\Omega}c(x) y\overline\varphi dx+i\la^3\int_{\Omega}  c(x)u\overline{\psi}dx=o(1).
\end{equation}
Adding Equations \eqref{est122} and \eqref{est-5}, we get
\begin{equation}
\int_{\Omega}\left|\la u\right|^2dx=o(1)\quad \text{and}\quad\int_{\Omega}\left|\la y\right|^2dx=o(1).
\end{equation}
Thus, the proof of the Lemma is completed.
\end{proof}
%%%%%%%%%%%%%%%%
\begin{Lemma}\label{lem4-pol}
Assume that the assumption (H1)  holds. Then,  the solution $(u, v, y, z) \in  D(\mathcal{A})$ of \eqref{eq-4.4}-\eqref{eq-4.7} satisfies the following estimations
\begin{equation}
\int_{\Omega}\left|\nabla u\right|^2dx=o(1)\quad \text{and}\quad\int_{\Omega}\left|\nabla y\right|^2dx=o(1).
\end{equation}
\end{Lemma}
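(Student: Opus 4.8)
The strategy is to recover the full gradient norms $\|\nabla u\|_{L^2(\Omega)}^2$ and $\|\nabla y\|_{L^2(\Omega)}^2$ by testing the reduced equations \eqref{pol4} and \eqref{pol5} against $\overline{u}$ and $\overline{y}$ respectively, exploiting the fact that Lemma \ref{lem3-pol} has already promoted the a priori bounds to $\|\la u\|_{L^2(\Omega)}=o(1)$ and $\|\la y\|_{L^2(\Omega)}=o(1)$ (and hence $\|u\|_{L^2(\Omega)},\|y\|_{L^2(\Omega)}=o(1)/\la$).

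First I would multiply \eqref{pol4} by $\overline{u}$, integrate over $\Omega$ and use Green's formula together with $u\in H_0^1(\Omega)$, which gives
\begin{equation*}
a\int_{\Omega}|\nabla u|^2\,dx=\int_{\Omega}|\la u|^2\,dx-\int_{\Omega}b(x)\nabla v\cdot\nabla\overline{u}\,dx-i\la\int_{\Omega}c(x)y\overline{u}\,dx+\int_{\Omega}\left(\frac{f_2}{\la^{2}}+c(x)\frac{f_3}{\la^{2}}+\frac{if_1}{\la}\right)\overline{u}\,dx.
\end{equation*}
The first term is $o(1)$ by Lemma \ref{lem3-pol}; the damping term is $o(1)/\la^{3}$ by Cauchy--Schwarz on $\omega_b$ using $\supp b\subset\omega_b$ and the estimates $\|\nabla v\|_{L^2(\omega_b)}=o(1)/\la$, $\|\nabla u\|_{L^2(\omega_b)}=o(1)/\la^{2}$ of Lemma \ref{lem1-pol}; the coupling term is bounded by $\|c\|_{L^\infty(\Omega)}\|\la y\|_{L^2(\Omega)}\|u\|_{L^2(\Omega)}=o(1)$ by Lemma \ref{lem3-pol} and \eqref{unif-bdd}; and the remaining source terms are $o(1)/\la$ using $\|F\|_{\mathcal{H}}=o(1)$ and $\|u\|_{L^2(\Omega)}=O(1)/\la$. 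Hence $\int_{\Omega}|\nabla u|^2\,dx=o(1)$. Carrying out the analogous computation with \eqref{pol5} tested against $\overline{y}$ gives
\begin{equation*}
\int_{\Omega}|\nabla y|^2\,dx=\int_{\Omega}|\la y|^2\,dx+i\la\int_{\Omega}c(x)u\overline{y}\,dx+\int_{\Omega}\left(\frac{f_4}{\la^{2}}-c(x)\frac{f_1}{\la^{2}}+\frac{if_3}{\la}\right)\overline{y}\,dx,
\end{equation*}
and estimating the right-hand side in the same way — the first term by Lemma \ref{lem3-pol}, the coupling term by $\|c\|_{L^\infty(\Omega)}\|\la u\|_{L^2(\Omega)}\|y\|_{L^2(\Omega)}=o(1)$, and the source terms by $o(1)/\la$ — yields $\int_{\Omega}|\nabla y|^2\,dx=o(1)$.

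This lemma carries essentially no difficulty, since all the substantive work is already contained in Lemmas \ref{lem1-pol}--\ref{lem3-pol}; the only point deserving a little attention is that the two coupling integrals carry a prefactor $\la$, so they can be absorbed only because Lemma \ref{lem3-pol} provides $\|\la u\|_{L^2(\Omega)},\|\la y\|_{L^2(\Omega)}=o(1)$ rather than merely $O(1)$. Combining Lemmas \ref{lem3-pol} and \ref{lem4-pol} with the identities \eqref{eq-4.4} and \eqref{eq-4.6} (which then force $\|v\|_{L^2(\Omega)},\|z\|_{L^2(\Omega)}=o(1)$) gives $\|U\|_{\mathcal{H}}=o(1)$, contradicting \eqref{unif}; this establishes \eqref{C2} and hence Theorem \ref{PolyStab}.
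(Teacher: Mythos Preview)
Your proof is correct and follows essentially the same approach as the paper: multiply \eqref{pol4} by $\overline{u}$ and \eqref{pol5} by $\overline{y}$, integrate by parts, and absorb each term using Lemma \ref{lem1-pol}, Lemma \ref{lem3-pol}, \eqref{unif-bdd}, and $\|F\|_{\mathcal{H}}=o(1)$. One small remark: the paper does not assume $\supp b\subset\omega_b$ (only $b\geq b_0$ on $\omega_b$), so your bound $o(1)/\la^{3}$ for the damping term should instead be obtained as $\bigl|\int_{\Omega}b(x)\nabla v\cdot\nabla\overline{u}\,dx\bigr|\leq \|\sqrt{b}\,\nabla v\|_{L^2(\Omega)}\|\sqrt{b}\,\nabla u\|_{L^2(\Omega)}=o(1)/\la$ via \eqref{pol2} and $\|\nabla u\|_{L^2(\Omega)}=O(1)$, which is still $o(1)$ and suffices.
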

\begin{proof}
Multiply Equation \eqref{pol4} by $\overline{u}$, integrating over $\Omega$, Green's formula, Equation \eqref{unif-bdd} and the fact that  $\|F\|_{\mathcal{H}}=\|(f_1,f_2,f_3,f_4)\|_{\mathcal{H}}=o(1)$, we obtain
\begin{equation}
\int_{\Omega}\left|\la u\right|^2dx-a\int_{\Omega}\left|\nabla u\right|^2dx-\int_{\Omega}b(x)\nabla v\cdot \nabla \overline{u}dx-\int_{\Omega}i\la c(x)y\overline{u}dx=\frac{o(1)}{\la^{2}}
\end{equation}
Using Equation \eqref{pol1} and Lemma \ref{lem3-pol}, we obtain
\begin{equation}
\int_{\Omega}\left|\nabla u\right|^2dx=o(1).
\end{equation}
Multiplying Equation \eqref{pol5} by $\overline{y}$ and proceeding in a similar way as above, we get
\begin{equation}
\int_{\Omega}\left|\nabla y\right|^2dx=o(1).
\end{equation}
\end{proof}

\noindent \textbf{Proof of Theorem \ref{PolyStab}.} 
Consequently, from the results of Lemmas \ref{lem3-pol} and \ref{lem4-pol}, we obtain  
\begin{equation*}
\int_{\Omega}\left(|v|^2+|z|^2+a\,|\nabla u|^2+ |\nabla y|^2\right)dx
=o\left(1\right).
\end{equation*}
Hence $\|U\|_{\HH}=o(1)$, which contradicts \eqref{unif}. Consequently, condition \eqref{C2} holds. This implies that the energy decay estimation \eqref{Energyp}. The proof is thus complete.
\\[0.1in]
\begin{rem}
In the case when $d,c\in L^{\infty}(\Omega)$ such that they are discontinuous functions, we didn't find any result on the stability of the system \eqref{AUX}. But, we can conjecture that the system \eqref{AUX} is exponentially stable. Further,  we have that the system \eqref{AUX} with $d,c$ are discontinuous functions is exponentially stable in the dimension 1 (see \cite{akil2020stability}).
\end{rem}

\noindent One of the main tools to prove the polynomial stability of the system \eqref{k-v}-\eqref{boundary conditions} when one of the assumptions (H2), (H3), (H4) or (H5) holds is to use the exponential or polynomial decay of the wave equation with  viscous damping. We consider the following system 
\begin{equation}\label{AUXXX}
\left\{\begin{array}{lll}
\varphi_{tt}-\Delta \varphi+\mathds{1}_{\omega_c}(x)\varphi_t=0&\text{in}&\Omega \times (0,+\infty)\\
\varphi=0&\text{in}&\Gamma\times (0,+\infty)\\
\varphi(\cdot,0)=\varphi_0,\quad \varphi_t(\cdot,0)=\varphi_1.
\end{array}
\right.
\end{equation}

\begin{rem}\label{AUX-1W}{\rm \textbf{(About System \eqref{AUXXX})}}
\begin{enumerate}
\item If (H2) or (H3) holds, system \eqref{AUXXX} is exponentially stable (see \cite{Burq1997ContrlabilitED} and Lemma 3.8 in \cite{NNW}). 
\item If (H4) holds, the energy of the wave equation \eqref{AUXXX} with local viscous damping decays polynomially as $t^{-1}$ for smooth initial data (see Example 3 in \cite{Liu2005}).
\item If (H5) holds,  the energy of the wave equation \eqref{AUXXX} with local viscous damping decays polynomially as $t^{-\frac{4}{3}}$ for smooth initial data (see \cite{Stahn2017}).
%\item In \cite{zuazua}, Theorem 1.1, if the boundary of $\Omega$ is $C^2$ and $\mathcal{O}$ is neighborhood of the boundary, then the solution of \eqref{AUXXX} decays exponentially if $\Gamma$ satisfies the Gamma condition defined in \ref{Gammacondition}. For the auxiliary problem \ref{AUX}, no result was found in the literature on the exponential stability of the system. However, taking into account that System \eqref{AUX} has two dampings and the result of Zuazua, we can conjecture that \eqref{AUX} is exponentially stable under the $\Gamma-$condition. Now, returning to our system, by using this assumption we deduce that \eqref{k-v}-\eqref{boundary conditions} is polynomially stable with an energy decay rate of type $t^{-1}$ if    
%\item From C. Bardos, G. Lebeau and J. Rauch in \cite{bardos},  for domains with boundaries, we know that when $\Omega$ is of {\color{red}class $C^{\infty}$} and if $\mathcal{O}$ satisfies the Geometric Control Condition (GCC) then \eqref{AUxuy} is valid with $\beta=0$.  
%\item
\end{enumerate}
\end{rem}

\begin{Theorem}\label{Theorem2}
Assume that assumption (H2) or (H3) holds. Also, assume that the energy of the system \eqref{AUXXX} is exponentially stable. Then, for all initial data $U_0\in D(\mathcal{A})$, there exists a constant $C_2>0$ independent of $U_0$,  the energy of the system \eqref{k-v}-\eqref{boundary conditions} satisfies the following estimation
\begin{equation}\label{Energyp2}
	E(t,U)\leq \frac{C_2}{t}\|U_0\|^2_{D(\mathcal{A})},\quad \forall t>0.
	\end{equation}
\end{Theorem}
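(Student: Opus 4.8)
The plan is to apply the Borichev--Tomilov theorem (Theorem \ref{bt}) with $\ell=2$, exactly as for Theorem \ref{PolyStab}. Condition \eqref{C1} is already contained in Lemmas \ref{ker1}, \ref{ker2} and \ref{surj} (the geometric hypotheses (H2), (H3) imply the relevant strong--stability assumptions), so only the resolvent estimate \eqref{C2} remains, and I would prove it by contradiction. Assume there are $\bigl(\la_n, U_n=(u_n,v_n,y_n,z_n)^\top\bigr)\subset\R^\ast\times D(\AA)$ with $|\la_n|\to+\infty$, $\|U_n\|_{\HH}=1$ and $\la_n^2(i\la_n I-\AA)U_n=F_n\to 0$ in $\HH$; this produces the identities \eqref{eq-4.4}--\eqref{eq-4.7}, hence \eqref{unif-bdd}, and, after eliminating $v$ and $z$, equations \eqref{pol4}--\eqref{pol5}. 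Dropping the index $n$, the goal is to reach $\|U\|_{\HH}=o(1)$, contradicting \eqref{unif}.

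The first two steps repeat Lemmas \ref{lem1-pol} and \ref{lem2-pol}. Taking $\Re\langle(i\la I-\AA)U,U\rangle_{\HH}$ gives $\int_\Omega b(x)|\nabla v|^2=o(1)/\la^2$, so by Poincar\'e's inequality $\|\nabla v\|_{L^2(\omega_b)}=\|v\|_{L^2(\omega_b)}=o(1)/\la$ and, via \eqref{eq-4.4}, $\|u\|_{L^2(\omega_b)}=\|\nabla u\|_{L^2(\omega_b)}=o(1)/\la^2$; since $\omega_c\subset\omega_b$ in both (H2) and (H3), these bounds hold in particular on $\omega_c$. Next, testing \eqref{pol4} against $\la\zeta\overline y$ with a cut-off $\zeta$ equal to $1$ on $\omega_c$ and supported inside $\omega_b$ (the boundary contributions vanishing because $y\in H^1_0(\Omega)$), these $\omega_b$--bounds together with $\|F\|_{\HH}=o(1)$ yield the local estimate $\int_{\omega_c}|\la y|^2=o(1)$, i.e. $\|y\|_{L^2(\omega_c)}=o(1)/\la$.

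The decisive new ingredient, replacing the coupled auxiliary system of Theorem \ref{PolyStab}, is the scalar damped wave equation \eqref{AUXXX}, which is exponentially stable by hypothesis (see Remark \ref{AUX-1W}); hence its generator, and its analogue with wave speed $a$, has $i\R$ in the resolvent set with uniformly bounded resolvent. I would use this twice. First, solving $\la^2\varphi+\Delta\varphi-i\la\mathds{1}_{\omega_c}\varphi=y$ in $\Omega$ with $\varphi=0$ on $\Gamma$ gives $\|\la\varphi\|_{L^2}+\|\nabla\varphi\|_{L^2}\le M\|y\|_{L^2}=O(1/\la)$ by \eqref{unif-bdd}; multiplying \eqref{pol5} by $\la^2\overline\varphi$, integrating by parts and substituting $\Delta\overline\varphi$ from this auxiliary equation, every term is controlled by the previous bounds — the forcing terms by $o(1)/\la$, the coupling term $i\la^3\int_{\omega_c}c(x)u\,\overline\varphi$ by $O(\la^3\|u\|_{L^2(\omega_c)}\|\varphi\|_{L^2})=o(1)$, and the damping cross-term $i\la^3\int_{\omega_c}y\,\overline\varphi$ by $O(\la^3\|y\|_{L^2(\omega_c)}\|\varphi\|_{L^2})=o(1)$ thanks to the local estimate — and the identity collapses to $\int_\Omega|\la y|^2=o(1)$. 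Second, solving $\la^2\phi+a\Delta\phi-i\la\mathds{1}_{\omega_c}\phi=u$ and multiplying \eqref{pol4} by $\la^2\overline\phi$: the integrations by parts transfer every derivative onto the smooth test function $\phi$, so the non-smooth term $\la^2\int_{\omega_b}b(x)\nabla v\cdot\nabla\overline\phi$ is simply absorbed ($=o(1)$, since $\|\nabla v\|_{L^2(\omega_b)}=o(1)/\la$, $\|\nabla\phi\|_{L^2}=O(1/\la)$), the coupling term $i\la^3\int_{\omega_c}c(x)y\,\overline\phi$ is $o(1)$ because $\|y\|_{L^2(\omega_c)}=o(1)/\la$, and one obtains $\int_\Omega|\la u|^2=o(1)$. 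This reliance on a smooth auxiliary multiplier is what makes the argument work without any regularity assumption on $b$ or $c$.

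To conclude, I would test \eqref{pol4} against $\overline u$ and \eqref{pol5} against $\overline y$: Green's formula, the estimates $\int_\Omega|\la u|^2=\int_\Omega|\la y|^2=o(1)$, $\|\nabla v\|_{L^2(\omega_b)}=o(1)/\la$ and $\|F\|_{\HH}=o(1)$ give $\int_\Omega|\nabla u|^2=\int_\Omega|\nabla y|^2=o(1)$, and then $v=i\la u-f_1/\la^2$, $z=i\la y-f_3/\la^2$ give $\|v\|_{L^2},\|z\|_{L^2}=o(1)$, whence $\|U\|_{\HH}=o(1)$, the desired contradiction; so \eqref{C2} holds and \eqref{Energyp2} follows from Theorem \ref{bt}. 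I expect the main obstacle to be twofold: constructing the cut-off $\zeta$ for the local estimate — immediate under (H3), where $\overline{\omega_c}\subset\omega_b$, but under (H2) requiring $\zeta$ to meet $\Gamma$ along $\overline{\omega_c}\cap\Gamma$ while keeping the boundary integrals zero — and, more seriously, the bookkeeping of the powers of $\la$ in the two multiplier identities built on the scalar auxiliary resolvent, which is precisely where the inclusion $\omega_c\subset\omega_b$ is needed to make the coupling terms negligible.
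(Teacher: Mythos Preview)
Your overall strategy is the same as the paper's: Borichev--Tomilov with $\ell=2$, contradiction argument, dissipation estimate on $\omega_b$, local estimate $\int_{\omega_c}|\la y|^2=o(1)$, then two \emph{decoupled} scalar damped-wave resolvents to pass to global estimates. Under (H2) your outline is correct and essentially identical to the paper's proof.

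The gap is under (H3). You write ``by Poincar\'e's inequality $\|v\|_{L^2(\omega_b)}=o(1)/\la$'', but (H3) only assumes $\omega_b\subset\Omega$ and $\overline{\omega_c}\subset\omega_b$; nothing forces $\overline{\omega_b}\cap\Gamma$ to have positive measure, so there is no trace of $v$ on $\partial\omega_b$ to invoke Poincar\'e. Without $\|u\|_{L^2(\omega_b)}=o(1)/\la^2$, the term $\la^3\int_\Omega \zeta u\,\overline y$ in your local multiplier identity is only $O(\la)$, and the argument for $\int_{\omega_c}|\la y|^2=o(1)$ collapses. You have the difficulty backwards: the cut-off under (H2) is harmless precisely because $y\in H_0^1(\Omega)$ kills the boundary terms, whereas (H3) is the case that needs extra work.

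The paper repairs this in two steps. First (Lemma \ref{lem2-pol3}) it multiplies \eqref{n8} by $h_1\overline u$, with $h_1=1$ on an intermediate set $\tilde\omega_b$ satisfying $\omega_c\subset\tilde\omega_b\subset\omega_b$, obtaining the weaker bound $\int_{\tilde\omega_b}|\la u|^2=O(1)/\la=o(1)$; the point is that the flux term $\int(a\nabla u+b\nabla v)\cdot\nabla(h_1\overline u)$ is controlled purely by $\|\nabla u\|_{L^2(\omega_b)}$, $\|\nabla v\|_{L^2(\omega_b)}$ and $\|u\|_{L^2(\Omega)}$, none of which require Poincar\'e. Second (Lemma \ref{lem2-pol2}, Case 2), instead of a single multiplier, it tests \eqref{n8} against $\la h_2\overline y$ \emph{and} \eqref{n9} against $\la h_2\overline u$, then adds and takes the imaginary part: the dangerous cross terms $\la^3\int h_2 u\,\overline y$ and $\la^3\int h_2 y\,\overline u$ are complex conjugates and drop out, leaving $c_0\int_{\omega_c}|\la u|^2-c_0\int_{\omega_c}|\la y|^2=o(1)$, from which the first step gives $\int_{\omega_c}|\la y|^2=o(1)$. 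After that, your global-estimate argument with the scalar auxiliary resolvents goes through unchanged, since $\|\varphi\|_{L^2}=O(1)/\la^2$ is strong enough to absorb $\|u\|_{L^2(\omega_c)}=o(1)/\la$.
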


\noindent Following Theorem \ref{bt} of Borichev and Tomilov (see Appendix), the polynomial energy decay \eqref{Energyp2} holds if \eqref{C1} and 
\begin{equation}\label{C3}\tag{${\rm{C3}}$}
\sup_{\la\in \R}\left\|(i\la I-\AA)^{-1}\right\|_{\mathcal{L}(\mathcal{H})}=O\left(\abs{\la}^{2}\right)
\end{equation}
holds. Since Condition \eqref{C1} is already proved. We will prove condition \eqref{C3} by an argument of contradiction. For this purpose, suppose that \eqref{C3} is false,  then there exists \\
$\left\{\left(\la_n,U_n:=(u_n,v_n,y_n,z_n)^\top\right)\right\}\subset \R^{\ast}\times D(\AA)$ with 
\begin{equation}\label{n1}
\abs{\la_n}\to +\infty \quad \text{and}\quad \|U_n\|_{\mathcal{H}}=\|(u_n,v_n,y_n,z_n)\|_{\mathcal{H}}=1, 
\end{equation}
such that 
\begin{equation}\label{n2}
\la_n^{2}\left(i\la_nI-\AA\right)U_n=F_n:=(f_{1,n},f_{2,n},f_{3,n},f_{4,n})^{\top}\to 0 \ \ \text{in}\ \ \mathcal{H}. 
\end{equation}
For simplicity, we drop the index $n$. Equivalently, from \eqref{n2}, we have
\begin{eqnarray}
i\la u-v&=&\la^{-2}f_1 \ \text{in}\ \ H_0^1(\Omega),\label{n3}\\ \noalign{\medskip}
i\la v-\divv(a \nabla u+b(x)\nabla v)+c(x)z&=&\la^{-2}f_2\ \ \text{in}\ \ L^2(\Omega),\label{n4}\\ \noalign{\medskip}
i\la y-z&=&\la^{-2}f_3\ \ \text{in}\ \ H_0^1(\Omega),\label{n5}\\ \noalign{\medskip}
i\la z- \divv(\nabla y)-c(x)v&=&\la^{-2}f_4\ \ \text{in}\ \ L^2(\Omega).\label{n6}
\end{eqnarray}
Here we will check the condition \eqref{C3} by finding a contradiction with \eqref{n1} such as $\left\|U\right\|_{\HH}=o(1)$. 
From Equations \eqref{n1}, \eqref{n3} and \eqref{n5} we obtain

\begin{equation}\label{n7}
\|u\|_{L^2(\Omega)}=\frac{O(1)}{\la}\quad\text{and}\quad \|y\|_{L^2(\Omega)}=\frac{O(1)}{\la}.
\end{equation}

\begin{Lemma}\label{Lemma1h3h4}
Assume that the assumption (H2) or (H3) holds. We have that the solution $(u, v, y, z) \in  D(\mathcal{A})$ of \eqref{n3}-\eqref{n6} satisfies the following estimations
\begin{equation}\label{eq1-lem1-pol2}
\|\nabla v\|_{L^2(\omega_b)}=\frac{o(1)}{\la}\quad \text{and}\quad\|\nabla u\|_{L^2(\omega_b)}=\frac{o(1)}{\la^{2}}.
\end{equation}
\end{Lemma}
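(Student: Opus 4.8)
The plan is to imitate the proof of Lemma~\ref{lem1-pol} almost verbatim, since the two bounds in \eqref{eq1-lem1-pol2} require nothing more than the dissipation identity and the first resolvent equation; the geometric hypotheses (H2) and (H3) play no role at this stage (they enter only later, through the exponential stability of \eqref{AUXXX}).

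First I would take the inner product of \eqref{n2} with $U$ in $\mathcal{H}$. By the dissipativeness identity \eqref{kvdissipation}, $\Re\left(\langle\mathcal{A}U,U\rangle_{\mathcal{H}}\right)=-\int_{\Omega}b(x)|\nabla v|^2\,dx$, and since $\langle i\la U,U\rangle_{\mathcal{H}}$ is purely imaginary, one gets
\[
\int_{\Omega}b(x)|\nabla v|^2\,dx=\Re\left(\langle (i\la I-\mathcal{A})U,U\rangle_{\mathcal{H}}\right)=\la^{-2}\,\Re\left(\langle F,U\rangle_{\mathcal{H}}\right)=\frac{o(1)}{\la^{2}},
\]
using $\|U\|_{\mathcal{H}}=1$ and $\|F\|_{\mathcal{H}}=o(1)$. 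Restricting the integral to $\omega_b$ and invoking the lower bound $b(x)\geq b_0>0$ on $\omega_b$ from \eqref{b} yields $\|\nabla v\|_{L^2(\omega_b)}^2=o(1)/\la^{2}$, which is the first estimate in \eqref{eq1-lem1-pol2}. For the second one I would rewrite \eqref{n3} as $\nabla u=\tfrac{1}{i\la}\nabla v+\tfrac{1}{i\la^{3}}\nabla f_1$; since $f_1\to 0$ in $H_0^1(\Omega)$ we have $\|\nabla f_1\|_{L^2(\Omega)}=o(1)$, hence
\[
\|\nabla u\|_{L^2(\omega_b)}\leq \frac{1}{|\la|}\|\nabla v\|_{L^2(\omega_b)}+\frac{1}{|\la|^{3}}\|\nabla f_1\|_{L^2(\Omega)}=\frac{o(1)}{\la^{2}}.
\]

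I do not expect a genuine obstacle here: the computation is routine and identical whether (H2) or (H3) is assumed. The only point worth stating carefully is the bookkeeping of the powers of $\la$ — the factor $\la^{-2}$ appearing in \eqref{n2} is exactly what upgrades the naive first-order bounds on $\nabla v$ and $\nabla u$ to the sharper ones displayed in \eqref{eq1-lem1-pol2}, and this extra decay will be what is needed further down to run the contradiction argument against \eqref{n1}.
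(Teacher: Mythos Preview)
Your proposal is correct and follows exactly the approach the paper intends: the paper's proof simply reads ``The proof of this Lemma is similar to that of Lemma~\ref{lem1-pol}'', and the two steps you wrote out (the dissipation identity applied to \eqref{n2}, then dividing \eqref{n3} by $i\la$ and taking gradients) are precisely the relevant parts of that earlier argument. Your remark that (H2)/(H3) play no role here is also to the point, and explains why only the two gradient estimates are stated (the $L^2$ bounds on $v$ and $u$ in $\omega_b$ from Lemma~\ref{lem1-pol} relied on a Poincar\'e inequality, which in turn used that $\overline{\omega_b}$ meets $\Gamma$; this is not available under (H3), and is treated separately in Lemma~\ref{lemma2h3} under (H2)).
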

\noindent The proof of this Lemma is similar to that of Lemma \ref{lem1-pol}.
\begin{Lemma}\label{lemma2h3}
Under the assumption (H2). We have that the solution $(u, v, y, z) \in  D(\mathcal{A})$ of \eqref{n3}-\eqref{n6} satisfies the following estimations
\begin{equation}\label{eq2-lem1-pol2}
 \|u\|_{L^2(\omega_b)}=\frac{o(1)}{\la^{2}}.
\end{equation}
\end{Lemma}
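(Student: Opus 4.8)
The plan is to promote the gradient estimate on $v$ over $\omega_b$ supplied by Lemma \ref{Lemma1h3h4} to an $L^2$-estimate on $v$ over $\omega_b$, and then to transfer it to $u$ via the first resolvent identity \eqref{n3}; this is the exact analogue, under (H2), of the chain of estimates carried out under (H1) in Lemma \ref{lem1-pol}. The role of assumption (H2) here is only to guarantee that $\meas(\overline{\omega_b}\cap\Gamma)>0$, that is, that $\partial\omega_b$ contains a piece of positive surface measure on which the trace of $v$ vanishes.

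First I would recall from Lemma \ref{Lemma1h3h4} that the solution of \eqref{n3}-\eqref{n6} satisfies $\|\nabla v\|_{L^2(\omega_b)}=\frac{o(1)}{\la}$. Since $v\in H_0^1(\Omega)$ its trace on $\Gamma$ vanishes, so the trace of $v|_{\omega_b}$ vanishes on $\overline{\omega_b}\cap\Gamma\subset\partial\omega_b$, a set of positive $(N-1)$-dimensional measure by (H2); a Poincar\'e--Friedrichs inequality on $\omega_b$ (valid for $H^1(\omega_b)$-functions whose trace vanishes on a portion of $\partial\omega_b$ of positive measure) then gives $\|v\|_{L^2(\omega_b)}\leq C_P\|\nabla v\|_{L^2(\omega_b)}=\frac{o(1)}{\la}$. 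Next, writing $i\la u=v+\la^{-2}f_1$ from \eqref{n3} and using that $\|F\|_{\HH}=o(1)$ forces $\|f_1\|_{L^2(\Omega)}=o(1)$, I would divide by $\la$ and restrict to $\omega_b$ to obtain
\[
\|u\|_{L^2(\omega_b)}\leq \frac{1}{|\la|}\left(\|v\|_{L^2(\omega_b)}+\frac{\|f_1\|_{L^2(\Omega)}}{\la^{2}}\right)=\frac{o(1)}{\la^{2}},
\]
which is the asserted estimate.

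The only point that requires care is the Poincar\'e--Friedrichs step: one should assume $\omega_b$ is connected with, say, Lipschitz boundary, so that the trace operator on $\partial\omega_b$ is well defined and the trace of $v$ taken from inside $\omega_b$ coincides with $v|_\Gamma=0$ on $\overline{\omega_b}\cap\Gamma$; under such assumptions the inequality is classical. A convenient way to set this up in practice is to extend $v$ by zero outside $\Omega$ (which remains in $H^1(\R^N)$ since $v\in H_0^1(\Omega)$) and work with a chain of localized Poincar\'e inequalities starting from a ball straddling $\Gamma$ centered at a Lebesgue point of $\overline{\omega_b}\cap\Gamma$. Apart from this, the argument is entirely mechanical and parallels the proof of Lemma \ref{lem1-pol}.
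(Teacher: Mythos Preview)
Your proposal is correct and follows essentially the same approach as the paper: apply a Poincar\'e-type inequality on $\omega_b$ to pass from $\|\nabla v\|_{L^2(\omega_b)}=o(1)/\la$ to $\|v\|_{L^2(\omega_b)}=o(1)/\la$, and then use \eqref{n3} to deduce $\|u\|_{L^2(\omega_b)}=o(1)/\la^{2}$. If anything, you are more explicit than the paper about \emph{why} the Poincar\'e--Friedrichs inequality is available on $\omega_b$ (namely, (H2) ensures $\meas(\overline{\omega_b}\cap\Gamma)>0$ so $v$ has vanishing trace on a nontrivial portion of $\partial\omega_b$), a point the paper simply assumes.
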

\begin{proof}
By using Poincaré inequality and Equation \eqref{eq1-lem1-pol2}, we get
\begin{equation}\label{eq01-lem1-pol2}
\| v\|_{L^2(\omega_b)}=\frac{o(1)}{\la}.
\end{equation}
From Equation \eqref{n3} and \eqref{eq01-lem1-pol2}, we obtain
\begin{equation}
 \|u\|_{L^2(\omega_b)}=\frac{o(1)}{\la^{2}}.
\end{equation}
\end{proof}

\noindent Inserting Equations \eqref{n3} and \eqref{n5} into \eqref{n4} and \eqref{n6}, we get
\begin{eqnarray}
\lam ^{2}u+\divv(a\nabla u+b(x)\nabla v)-i\la c(x)y&=&-\frac{f_2}{\la^{2}}-c(x)\frac{f_3}{\la^{2}}-\frac{if_1}{\la},\label{n8}\\
\la ^{2}y+\Delta y+i\la c(x)u&=&-\frac{f_4}{\la^{2}}+c(x)\frac{f_1}{\la^{2}}-\frac{if_3}{\la}.\label{n9}
\end{eqnarray}

\begin{Lemma}\label{lem2-pol3}
Assume that assumption (H3) holds. Then, the solution $(u, v, y, z) \in  D(\mathcal{A})$ of \eqref{n3}-\eqref{n6} satisfies the following estimation
\begin{equation}\label{1eq1-lem2-pol3}
\int_{\tilde{\omega}_b}\abs{\la u}^2dx=o(1).
\end{equation}
such that $\omega_c\subset\tilde{\omega}_b\subset\omega_b$.
\end{Lemma}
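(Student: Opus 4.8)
The plan is to establish an interior $L^2$-bound for $v$ on a set slightly smaller than $\omega_b$ by testing the resolvent equation \eqref{n4} against a localized multiple of $\bar v$, and then to transfer that bound to $\lambda u$ via \eqref{n3}. In contrast with Lemma \ref{lemma2h3}, under (H3) the set $\omega_b$ does not meet $\Gamma$, so Poincar\'e's inequality is unavailable on $\omega_b$; this is exactly why only the weaker estimate $\int_{\tilde\omega_b}|\lambda u|^2=o(1)$, and on the smaller set $\tilde\omega_b$, can be expected here.

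First I would fix $\tilde\omega_b$ with $\overline{\omega_c}\subset\tilde\omega_b$ and $\overline{\tilde\omega_b}\subset\omega_b$ — possible since $\overline{\omega_c}\subset\omega_b$ under (H3) — together with a cut-off $\theta\in C_c^\infty(\omega_b)$ satisfying $0\le\theta\le1$ and $\theta\equiv1$ on $\tilde\omega_b$. Recall that $\|U\|_{\HH}=1$ gives in particular $\|v\|_{L^2(\Omega)}\le1$ and $\|z\|_{L^2(\Omega)}\le1$, while Lemma \ref{Lemma1h3h4} provides $\|\nabla v\|_{L^2(\omega_b)}=o(1)/\lambda$ and $\|\nabla u\|_{L^2(\omega_b)}=o(1)/\lambda^2$. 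Multiplying \eqref{n4} by $\theta^2\bar v$, integrating over $\Omega$ and applying Green's formula — no boundary term since $\supp\theta\subset\omega_b\subset\Omega$, and the integration by parts of $\divv(a\nabla u+b(x)\nabla v)$ is legitimate because $U\in D(\mathcal{A})$ makes this divergence an $L^2$ function, so the pairing reduces to $-\int_\Omega(a\nabla u+b\nabla v)\cdot\nabla(\theta^2\bar v)\,dx$ — one obtains
\[
i\lambda\int_\Omega\theta^2|v|^2\,dx + \int_\Omega \bigl(a\nabla u+b(x)\nabla v\bigr)\cdot\nabla(\theta^2\bar v)\,dx + \int_\Omega c(x)z\,\theta^2\bar v\,dx = \lambda^{-2}\int_\Omega f_2\,\theta^2\bar v\,dx .
\]

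Since $\nabla(\theta^2\bar v)=2\theta\bar v\,\nabla\theta+\theta^2\nabla\bar v$, Cauchy-Schwarz combined with the bounds above and $\supp\theta\subset\omega_b$ shows that the second integral on the left and the right-hand side are $o(1)/\lambda$, whereas $\bigl|\int_\Omega c(x)z\,\theta^2\bar v\,dx\bigr|\le\|c\|_{L^\infty}\|z\|_{L^2}\|v\|_{L^2}=O(1)$. Taking moduli therefore gives $\lambda\int_\Omega\theta^2|v|^2\,dx=O(1)$, hence $\int_{\tilde\omega_b}|v|^2\,dx\le\int_\Omega\theta^2|v|^2\,dx=O(1/\lambda)=o(1)$. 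Finally, \eqref{n3} yields $\lambda u=-i\,(v+\lambda^{-2}f_1)$, so $\int_{\tilde\omega_b}|\lambda u|^2\,dx\le 2\int_{\tilde\omega_b}|v|^2\,dx+2\lambda^{-4}\|f_1\|_{L^2}^2=o(1)$, which is the assertion.

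The computation is routine; the one point to watch is the bookkeeping that isolates the purely imaginary term $i\lambda\int_\Omega\theta^2|v|^2$ and absorbs everything else, in particular recognizing that the coupling term $\int_\Omega c(x)z\,\theta^2\bar v\,dx$ can only be said to be $O(1)$ rather than smaller, which pins down the order $O(1/\lambda)$ of the bound for $\|v\|_{L^2(\tilde\omega_b)}^2$. Note that the GCC assumption on $\omega_c$ is not used in this lemma; it enters afterwards, in order to propagate this local estimate to all of $\Omega$ through the auxiliary damped wave equation \eqref{AUXXX}.
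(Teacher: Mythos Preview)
Your argument is correct and follows essentially the same localized-multiplier strategy as the paper: a smooth cut-off supported in $\omega_b$ is used to extract a local $L^2$ bound from the dissipation estimates of Lemma~\ref{Lemma1h3h4}, with the coupling term being the $O(1)$ bottleneck that fixes the rate. The only cosmetic difference is that the paper multiplies the combined equation \eqref{n8} by $h_1\bar u$ to obtain $\int_\Omega h_1|\lambda u|^2\,dx=O(1)/\lambda$ directly, whereas you multiply \eqref{n4} by $\theta^2\bar v$ to bound $\|v\|_{L^2(\tilde\omega_b)}^2$ and then pass to $\lambda u$ via \eqref{n3}; since $v\approx i\lambda u$, the two computations are equivalent.
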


\begin{proof}
Let a non-empty open subset  $\tilde{\omega}_b$   such that $\omega_c\subset\tilde{\omega}_b\subset\omega_b$. Then, we define the function $h_1\in C_{c}^{\infty}(\R^N)$ such that
\begin{equation}\label{h1}
h_1(x)=\left\{\begin{array}{ccc}
1&\text{if}&x\in \tilde{\omega}_b,\\
0&\text{if}& x\in \Omega\backslash\omega_b,\\
0\leq h_1\leq 1&& elsewhere.
\end{array}\right.
\end{equation}
Multiply \eqref{n8} by $h_1\overline{u}$ and integrate over $\Omega$, we get
\begin{equation}\label{Est0-lem2}
\int_{\Omega}h_1\abs{\la u}^2dx-\int _{\Omega}(a\nabla u+b(x)\nabla v)\cdot(h_1\nabla\overline{u}+\nabla h_1\overline{u})dx-i\la \int_{\Omega}h_1c(x)y\overline{u}dx=\frac{o(1)}{\la^2}.
\end{equation}
Using \eqref{n7} and \eqref{eq1-lem1-pol2}, we have
\begin{equation}\label{Est1-lem2}
\left|\int _{\Omega}(a\nabla u+b(x)\nabla v)\cdot(h_1\nabla\overline{u}+\nabla h_1\overline{u})dx\right|=\frac{o(1)}{\la^{2}}
\end{equation}
and 
\begin{equation}\label{Est2-lem2}
\left|i\la\int_{\Omega}h_1c(x)y\overline{u}dx\right|=\frac{O(1)}{\la}.
\end{equation}
Thus, by using Equations \eqref{Est1-lem2} and \eqref{Est2-lem2} in \eqref{Est0-lem2}, we obtain
\begin{equation}
\int_{\Omega}h_1\abs{\la u}^2dx=\frac{O(1)}{\la}.
\end{equation}
Thus, we reach our desired result.
\end{proof}

\begin{Lemma}\label{lem2-pol2}
Assume that assumption (H2) or (H3) holds. Then, the solution $(u, v, y, z) \in  D(\mathcal{A})$ of \eqref{n3}-\eqref{n6} satisfies the following estimation
\begin{equation}\label{eq1-lem2-pol2}
\int_{\omega_c}\abs{\la y}^2dx=o(1).
\end{equation}
\end{Lemma}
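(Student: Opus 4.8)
The plan is to run a localisation argument on the coupling equation \eqref{n8}, exactly in the spirit of Lemma \ref{lem2-pol}: multiplying \eqref{n8} by $\lambda\zeta\overline{y}$ for a suitable cut-off makes the dissipative quantity $\int_\Omega c(x)\zeta\,|\lambda y|^2\,dx$ appear, and one then checks that every other term is $o(1)$ using the a priori bounds already established (Lemma \ref{Lemma1h3h4}, together with Lemma \ref{lemma2h3} under (H2) resp.\ Lemma \ref{lem2-pol3} under (H3)), the estimates \eqref{n7}, and $\|F\|_{\mathcal{H}}=o(1)$. First I would fix $\zeta\in C_c^\infty(\R^N)$ with $0\le\zeta\le 1$, $\zeta\equiv 1$ on $\omega_c$ and $\supp\zeta\subset\omega_b$ (under (H3) one takes $\supp\zeta\subset\tilde\omega_b$, which is legitimate since $\overline{\omega_c}\subset\tilde\omega_b\subset\omega_b$). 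Multiplying \eqref{n8} by $\lambda\zeta\overline{y}$, integrating over $\Omega$ and applying Green's formula gives
\begin{equation*}
i\int_\Omega c(x)\,\zeta\,|\lambda y|^2\,dx=\lambda^3\int_\Omega u\,\zeta\,\overline{y}\,dx-\lambda\int_\Omega\bigl(a\nabla u+b(x)\nabla v\bigr)\cdot\bigl(\overline{y}\,\nabla\zeta+\zeta\,\nabla\overline{y}\bigr)\,dx+\lambda\int_\Omega\Bigl(\tfrac{f_2}{\lambda^2}+c(x)\tfrac{f_3}{\lambda^2}+\tfrac{if_1}{\lambda}\Bigr)\zeta\,\overline{y}\,dx .
\end{equation*}

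Next I would dispose of the two easy terms on the right. The source term has $L^2$-norm $o(1)/\lambda$, so paired with $\lambda\zeta\overline{y}$ and $\|y\|_{L^2(\Omega)}=O(1)/\lambda$ it is $o(1)/\lambda$; for the gradient term, $\supp\zeta\subset\omega_b$ together with $\|\nabla u\|_{L^2(\omega_b)}=o(1)/\lambda^2$, $\|\nabla v\|_{L^2(\omega_b)}=o(1)/\lambda$ (Lemma \ref{Lemma1h3h4}), $\|\nabla y\|_{L^2(\Omega)}\le 1$ and $\|y\|_{L^2(\omega_b)}=O(1)/\lambda$ make each of the four resulting products $o(1)$. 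There remains the cross term $\lambda^3\int_\Omega u\,\zeta\,\overline{y}\,dx$. Under (H2) this is immediate: by Lemma \ref{lemma2h3}, $\|u\|_{L^2(\omega_b)}=o(1)/\lambda^2$, whence $\bigl|\lambda^3\int_\Omega u\zeta\overline{y}\,dx\bigr|\le\lambda^3\|u\|_{L^2(\omega_b)}\|y\|_{L^2(\omega_b)}=\lambda^3\cdot o(\lambda^{-2})\cdot O(\lambda^{-1})=o(1)$. Collecting the estimates yields $\int_\Omega c(x)\zeta\,|\lambda y|^2\,dx=o(1)$, and since $\zeta\equiv1$ and $c\ge c_0$ on $\omega_c$, the definition of $c$ and $\zeta$ forces $\int_{\omega_c}|\lambda y|^2\,dx=o(1)$.

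The delicate point is the cross term under (H3): there one only has $\int_{\tilde\omega_b}|\lambda u|^2=o(1)$ from Lemma \ref{lem2-pol3}, i.e.\ $\|u\|_{L^2(\tilde\omega_b)}=o(1)/\lambda$, which is not enough for the crude Cauchy--Schwarz bound. The remedy I would use is to feed the second coupling equation \eqref{n9} into the cross term: conjugating \eqref{n9} gives $\lambda^2\overline{y}=-\Delta\overline{y}+i\lambda c(x)\overline{u}+\overline{R}$ with $R:=-\tfrac{f_4}{\lambda^2}+c\tfrac{f_1}{\lambda^2}-\tfrac{if_3}{\lambda}$ and $\|R\|_{L^2(\Omega)}=o(1)/\lambda$, so that, after one integration by parts (the function $u\zeta\in H_0^1(\Omega)$ produces no boundary term),
\begin{equation*}
\lambda^3\int_\Omega u\,\zeta\,\overline{y}\,dx=i\int_\Omega c(x)\,\zeta\,|\lambda u|^2\,dx+\lambda\int_\Omega u\,\zeta\,\overline{R}\,dx+\lambda\int_\Omega\bigl(\zeta\nabla u+u\nabla\zeta\bigr)\cdot\nabla\overline{y}\,dx .
\end{equation*}
Now the first term is $o(1)$ by Lemma \ref{lem2-pol3} (as $\supp\zeta\subset\tilde\omega_b$), the second is $\le\lambda\|u\|_{L^2(\tilde\omega_b)}\|R\|_{L^2(\Omega)}=o(1)/\lambda$, and the third is $o(1)$ using once more $\|\nabla u\|_{L^2(\omega_b)}=o(1)/\lambda^2$, $\|u\|_{L^2(\tilde\omega_b)}=o(1)/\lambda$ and $\|\nabla y\|_{L^2(\Omega)}\le 1$. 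Hence again $\int_\Omega c(x)\zeta\,|\lambda y|^2\,dx=o(1)$ and the conclusion follows as above. I expect this reorganisation of the cross term under (H3) to be the only genuine obstacle; the rest is routine bookkeeping with the estimates already in hand.
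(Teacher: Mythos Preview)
Your proposal is correct and follows essentially the same route as the paper. Under (H2) your argument is identical to the paper's Case 1 (same cut-off, same multiplication by $\lambda\zeta\overline{y}$, same use of $\|u\|_{L^2(\omega_b)}=o(\lambda^{-2})$). Under (H3) the only difference is organizational: the paper handles the cross term $\lambda^3\int u\,h_2\,\overline{y}$ by separately multiplying \eqref{n9} by $\lambda h_2\overline{u}$, summing the two identities, and taking the imaginary part so that $\lambda^3\int u h_2\overline{y}+\lambda^3\int y h_2\overline{u}=2\Re(\lambda^3\int u h_2\overline{y})$ drops out; you instead substitute the conjugate of \eqref{n9} directly into the cross term and integrate by parts. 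These two manoeuvres are equivalent, and both rest on the same key input $\int_{\tilde\omega_b}|\lambda u|^2=o(1)$ from Lemma~\ref{lem2-pol3}.
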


\begin{proof}
\textbf{Case1.}
Assume that assumption (H2) holds, we define the function $\rho\in C_{c}^{\infty}(\R^N)$ such that
\begin{equation}\label{rho}
\rho(x)=\left\{\begin{array}{ccc}
1&\text{if}&x\in \omega_{c},\\
0&\text{if}& x\in \Omega\backslash\omega_b,\\
0\leq\rho\leq 1&& elsewhere.
\end{array}\right.
\end{equation}
Now, multiplying \eqref{n8} by $\la \rho \overline{y}$, integrating over $\Omega$ and using Green's formula, \eqref{n7} and the fact that $\|F\|_{\mathcal{H}}=\|(f_1,f_2,f_3,f_4)\|_{\mathcal{H}}=o(1)$, we get

\begin{equation}\label{est1-lem2}
\la^3\int_{\Omega}u\rho\overline{y}dx-\la\int_{\Omega}\left(a\nabla u+b(x)\nabla v\right)\cdot(\nabla\rho \overline{y}+\rho\nabla\overline{y})dx-i\int_{\Omega}c(x)\rho\abs{\la y}^2dx=\frac{o(1)}{\la}.
\end{equation}
Using \eqref{n7},  \eqref{eq1-lem1-pol2} and Cauchy-Schwarz we obtain
\begin{equation}\label{est2-lem2}
\left|\la^3\int_{\Omega}u\rho\overline{y}dx\right|\leq \la^3 \|u\|_{L^2(\omega_b)}\cdot\|y\|_{L^2(\omega_b)}=o(1).
\end{equation}
and 
\begin{equation}\label{est3-lem2}
\left|\la\int_{\Omega}\left(a\nabla u+b(x)\nabla v\right)\cdot(\nabla\rho \overline{y}+\rho\nabla\overline{y})dx\right|=o(1).
\end{equation}
Thus, using Equations \eqref{est2-lem2} and \eqref{est3-lem2} in \eqref{est1-lem2} we obtain our desired result for the first case.\\
\textbf{Case 2.}
Assume  that assumption (H3) holds.  Define the function $h_2\in C_{c}^{\infty}(\R^N)$ such that
\begin{equation}\label{h1}
h_2(x)=\left\{\begin{array}{ccc}
1&\text{if}&x\in \omega_{c},\\
0&\text{if}& x\in \Omega\backslash\tilde{\omega}_b,\\
0\leq h_2\leq 1&& elsewhere.
\end{array}\right.
\end{equation}
Multiply \eqref{n8} by $\la h_2\overline{y}$ and integrate over $\Omega$, and using Green's formula, \eqref{n7} and the fact that $\|F\|_{\mathcal{H}}=\|(f_1,f_2,f_3,f_4)\|_{\mathcal{H}}=o(1)$, we get

\begin{equation}\label{Est1-lem3}
\la^3\int_{\Omega}uh_2\overline{y}dx-\la\int_{\Omega}\left(a\nabla u+b(x)\nabla v\right)\cdot(\nabla h_2 \overline{y}+h_2\nabla\overline{y})dx-i\int_{\Omega}c(x)h_2\abs{\la y}^2dx=\frac{o(1)}{\la}.
\end{equation}
Using \eqref{n7},  \eqref{eq1-lem1-pol2} and Cauchy-Schwarz,  we obtain
\begin{equation}\label{new1}
\left|\la\int_{\Omega}\left(a\nabla u+b(x)\nabla v\right)\cdot(\nabla h_2 \overline{y}+h_2\nabla\overline{y})dx\right|=o(1).
\end{equation}
By using Equation \eqref{new1} in \eqref{Est1-lem3}, we obtain
\begin{equation}\label{new2}
\la^3\int_{\Omega}uh_2\overline{y}dx-i\int_{\Omega}c(x)h_2\abs{\la y}^2dx=o(1).
\end{equation}
Now, multiply \eqref{n9} by $\la h_2\overline{u}$ and integrate over $\Omega$, and using Green's formula, \eqref{n7} and the fact that $\|F\|_{\mathcal{H}}=\|(f_1,f_2,f_3,f_4)\|_{\mathcal{H}}=o(1)$, we get
\begin{equation}\label{new3}
\la^3\int_{\Omega}yh_2\overline{u}dx-\la\int_{\Omega}\nabla y(\nabla h_2 \overline{u}+h_2\nabla\overline{u})dx+ic_0\int_{\omega_c}\abs{\la u}^2dx=\frac{o(1)}{\la}.
\end{equation}
Using \eqref{n1}, \eqref{n7} and Cauchy-Schwarz,  we obtain
\begin{equation}\label{new4}
\left|\la\int_{\Omega}\nabla y\cdot(\nabla h_2 \overline{u}+h_2\nabla\overline{u})dx\right|=o(1).
\end{equation}
Inserting \eqref{new4} into \eqref{new3}, we get
\begin{equation}\label{new5}
\la^3\int_{\Omega}yh_2\overline{u}dx+ic_0\int_{\omega_c}\abs{\la u}^2dx=o(1).\end{equation}
Summing Equations \eqref{new2} and \eqref{new5}, and taking the imaginary part and using Equation \eqref{1eq1-lem2-pol3}, we obtain our desired result.
\end{proof}

\begin{Lemma}\label{lem3-pol2}
Assume that assumption (H2) or (H3) holds. Then,  the solution $(u, v, y, z) \in  D(\mathcal{A})$ of \eqref{n3}-\eqref{n6} satisfies the following estimations
\begin{equation}
\int_{\Omega}\left|\la u\right|^2dx=o(1)\quad \text{and}\quad\int_{\Omega}\left|\la y\right|^2dx=o(1).
\end{equation}
\end{Lemma}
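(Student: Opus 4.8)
The plan is to follow exactly the scheme of the proof of Lemma \ref{lem3-pol}: use the auxiliary system \eqref{aux2} and its resolvent estimate \eqref{bdd} (valid since the damped wave equation \eqref{AUXXX} is exponentially stable under (H2) or (H3), hence $\mathcal{A}_{aux}$ has a uniformly bounded resolvent on $i\R$ by the Gearhart--Huang--Prüss theorem), combined with the frequency-domain identities \eqref{n8}--\eqref{n9}. First I would invoke the lemma providing \eqref{bdd} to fix, for each $\la$, the solution $(\varphi,\psi)$ of \eqref{aux2} with right-hand side $(u,y)$, so that $\|\la\varphi\|+\|\nabla\varphi\|+\|\la\psi\|+\|\nabla\psi\|\leq M(\|u\|+\|y\|)=O(1)/\la$ by \eqref{n7}.

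Next, as in Step 1 of Lemma \ref{lem3-pol}, I would multiply \eqref{n8} by $\la^2\overline{\varphi}$, integrate over $\Omega$, apply Green's formula and use $\|F\|_{\mathcal{H}}=o(1)$ together with \eqref{bdd} to get the analogue of \eqref{est6}. The term $\la^2\int_\Omega b(x)\nabla v\cdot\nabla\overline{\varphi}\,dx$ is $o(1)$ because $b$ is supported in $\omega_b$ and $\|\nabla v\|_{L^2(\omega_b)}=o(1)/\la$ by \eqref{eq1-lem1-pol2} while $\|\nabla\varphi\|=O(1)/\la$; substituting $\la^2\overline{\varphi}+a\Delta\overline{\varphi}=-i\la d(x)\overline{\varphi}-i\la c(x)\overline{\psi}+\overline{u}$ from \eqref{aux2} yields the analogue of \eqref{est10}. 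Here the key point is that $d$ is supported in a set $\omega_\varepsilon\subset\omega\subset\omega_b$, so the term $\la^3\int_\Omega d(x)u\overline{\varphi}\,dx$ is controlled by $\la^3\|u\|_{L^2(\omega_b)}\|\varphi\|$; under (H2) this is $o(1)/\la$ by Lemma \ref{lemma2h3} (i.e. \eqref{eq2-lem1-pol2}), and under (H3) the same bound on $\|u\|_{L^2(\omega_b)}$ — or at least on $\|\la u\|_{L^2(\tilde\omega_b)}$ from Lemma \ref{lem2-pol3} — must be used; in either case this term is $o(1)$, leaving the analogue of \eqref{est122}:
\begin{equation*}
\int_{\Omega}\left|\la u\right|^2dx-i\la^3\int_{\Omega}c(x)u\overline{\psi}\,dx-i\la^3\int_{\Omega}c(x)y\overline{\varphi}\,dx=o(1).
\end{equation*}
Symmetrically, multiplying \eqref{n9} by $\la^2\overline{\psi}$, using Green's formula, \eqref{bdd}, Lemma \ref{lem2-pol2} (which gives $\|\la y\|_{L^2(\omega_c)}=o(1)$, hence $\la^3\int d(x)y\overline{\psi}=o(1)$), and substituting $\la^2\overline{\psi}+\Delta\overline{\psi}=-i\la d(x)\overline{\psi}+i\la c(x)\overline{\varphi}+\overline{y}$, I obtain the analogue of \eqref{est-5}:
\begin{equation*}
\int_{\Omega}\left|\la y\right|^2dx+i\la^3\int_{\Omega}c(x)y\overline{\varphi}\,dx+i\la^3\int_{\Omega}c(x)u\overline{\psi}\,dx=o(1).
\end{equation*}
Adding these two identities, the cross terms cancel and I conclude $\int_\Omega|\la u|^2\,dx=o(1)$ and $\int_\Omega|\la y|^2\,dx=o(1)$.

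The main obstacle is the (H3) case: there the viscous damping $d$ of the auxiliary system lives only in $\tilde\omega_b$, and the control of $\|u\|_{L^2(\omega_b)}$ — available for free under (H2) via Poincaré and \eqref{eq1-lem1-pol2} — is not immediate, so one genuinely needs the localized estimate \eqref{1eq1-lem2-pol3} of Lemma \ref{lem2-pol3} together with \eqref{eq1-lem2-pol2} of Lemma \ref{lem2-pol2}, and must check that the cutoff functions $h_1,h_2$ are chosen compatibly with the support of $d$. Everything else is a routine repetition of the Green's-formula bookkeeping already carried out in Lemmas \ref{lem2-pol}--\ref{lem3-pol}.
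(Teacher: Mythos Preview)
Your plan follows the scheme of Lemma \ref{lem3-pol} too literally and thereby imports a hypothesis that is \emph{not} available here. You invoke the coupled auxiliary system \eqref{aux2} and its resolvent bound \eqref{bdd}; but the proof of \eqref{bdd} (via Remark \ref{exponential-aux}) rests on the exponential stability of the \emph{coupled} system \eqref{AUX}, which in turn requires $c\in W^{1,\infty}(\Omega)$. That regularity is part of Theorem \ref{PolyStab} (the (H1) result) but is \emph{not} assumed in Theorem \ref{Theorem2}. Your stated justification---``the damped wave equation \eqref{AUXXX} is exponentially stable, hence $\mathcal{A}_{aux}$ has uniformly bounded resolvent''---conflates the single equation \eqref{AUXXX} with the coupled operator $\mathcal{A}_{aux}$; this implication does not follow, and the paper explicitly remarks (just after the proof of Theorem \ref{PolyStab}) that exponential stability of \eqref{AUX} with discontinuous $c$ is open.

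The paper avoids this by switching to a \emph{decoupled} auxiliary system \eqref{aux4}: two independent damped wave equations with damping $\mathds{1}_{\omega_c}$, whose resolvent estimate \eqref{bdd1} is a direct consequence of the exponential stability of the single equation \eqref{AUXXX} under GCC on $\omega_c$. Because the auxiliary equations are uncoupled, the substitution $\la^2\overline{\varphi}+a\Delta\overline{\varphi}=-i\la\mathds{1}_{\omega_c}\overline{\varphi}+\overline{u}$ produces no cross term in $\psi$; one then obtains $\int_\Omega|\la u|^2\,dx=o(1)$ directly from \eqref{n8} alone (using \eqref{eq2-lem1-pol2} or \eqref{1eq1-lem2-pol3} for the $\mathds{1}_{\omega_c}u\overline{\varphi}$ term and Lemma \ref{lem2-pol2} for the $c(x)y\overline{\varphi}$ term), and likewise $\int_\Omega|\la y|^2\,dx=o(1)$ from \eqref{n9} alone---no addition-and-cancellation step is needed. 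In short: replace \eqref{aux2} by \eqref{aux4} and your bookkeeping goes through, in fact more simply than what you wrote.
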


\begin{proof}
 Let $\varphi, \psi \in H^2(\Omega)\cap H_0^1(\Omega)$ be the solution of the following system
\begin{equation}\label{aux4}
\left\{
\begin{array}{lll}
\lam ^{2}\varphi+a\Delta\varphi-i\lam\mathds{1}_{\omega_c}(x)\varphi=u,&\text{in} &\Omega\\ 
\lam ^{2}\psi+\Delta\psi-i\lam\mathds{1}_{\omega_c}(x)\psi=y, &\text{in}& \Omega\\ 
\varphi=\psi=0,&\text{on}&\Gamma\\ 
\end{array}\right.
\end{equation}
where $(u,v,y,z)$ is the solution of \eqref{n3}-\eqref{n6}.  Since either (H2) or (H3) holds, then system \eqref{AUXXX} is exponentially stable. Thus, there exists $M>0$ such that system \eqref{aux4} satisfies the following estimation
\begin{equation}\label{bdd1}
\|\lam\varphi\|_{L^{2}(\Omega)}+\|\nabla\varphi\|_{L^{2}(\Omega)}+\|\lam\psi\|_{L^{2}(\Omega)}+\|\nabla\psi\|_{L^{2}(\Omega)}\leq M\left(\| u\|_{L^{2}(\Omega)}+\| y\|_{L^{2}(\Omega)}\right).
\end{equation}
\textbf{Case 1.} Under the assumption (H3). Multiply \eqref{n8} by $\la^2\overline{\varphi}$ and integrate over $\Omega$, and using Green's formula, Equation \eqref{bdd1}, and the fact that $\|F\|_{\mathcal{H}}=\|(f_1,f_2,f_3,f_4)\|_{\mathcal{H}}=o(1)$, we obtain
\begin{equation}\label{est6-1}
\int_{\Omega}\left(\la^2\overline{\varphi}+a\Delta\overline{ \varphi}\right)\la^2 u dx-\la^{2}\int_{\Omega}b(x)\nabla v\nabla\bar{\varphi}dx-\int_{\Omega} i\la^3 c(x)y\overline{\varphi}dx=\frac{o(1)}{\la}.
\end{equation}
From Equation \eqref{eq1-lem1-pol2} and \eqref{bdd1}, we obtain 
\begin{equation}\label{est7-1}
\left|\la^{2}\int_{\Omega}b(x)\nabla v\cdot\nabla\bar{\varphi}dx\right|=o(1).
\end{equation}
Now, using System \eqref{aux4} and Equation \eqref{est7-1} in \eqref{est6-1}, we get
\begin{equation}\label{est10-1}
\int_{\Omega}\left|\la u\right|^2dx-i\la^3\int_{\Omega}\mathds{1}_{\omega_{c}}(x)u\overline{\varphi}dx-i\la^3 \int_{\Omega} c(x)y\overline{\varphi}dx=o(1).
\end{equation}
By using \eqref{eq2-lem1-pol2}, \eqref{bdd1} and the fact that $\omega_c\subset\omega_b$
\begin{equation}\label{est11-1}
\left|i\la^3\int_{\Omega}\mathds{1}_{\omega_c}(x)u\overline{\varphi}dx\right|\leq \la^3 \|u\|_{L^2(\omega_c)}\cdot\|\varphi\|_{L^2(\Omega)}=\frac{o(1)}{\la}.
\end{equation}
Using \eqref{eq1-lem2-pol2} and \eqref{bdd1}, we get that
\begin{equation}\label{est12-1}
\left|i\la^3\int_{\Omega}  c(x)y\overline{\varphi}dx\right|=o(1).
\end{equation}
Now, inserting Equations \eqref{est11-1} and \eqref{est12-1} into \eqref{est10-1} , we get
\begin{equation}\label{est12}
\int_{\Omega}\left|\la u\right|^2dx=o(1).
\end{equation}
Multiply \eqref{n9} by $\la^2\overline{\psi}$ and integrate over $\Omega$, and using Green's formula, Equation \eqref{bdd1}, and the fact that $\|F\|_{\mathcal{H}}=\|(f_1,f_2,f_3,f_4)\|_{\mathcal{H}}=o(1)$, we obtain
\begin{equation}\label{eq1}
\int_{\Omega}\left(\la^2\overline{\psi}+\Delta\overline{ \psi}\right)\la^2 y dx+i\la^3\int_{\Omega}  c(x)u\overline{\psi}dx=\frac{o(1)}{\la}.
\end{equation}
 By using System \eqref{aux4} in \eqref{eq1}, we get
 \begin{equation}\label{eq2}
\int_{\Omega}\left|\la y\right|^2dx-i\la^3\int_{\Omega}\mathds{1}_{\omega_{c}}(x)y\overline{\psi}dx+i\la^3\int_{\Omega}  c(x)u\overline{\psi}dx=o(1). \end{equation}
Using \eqref{eq2-lem1-pol2}, \eqref{eq1-lem2-pol2}, and \eqref{bdd1} and the fact that $\omega_c\subset\omega_b$, we get
\begin{equation}\label{eq3}
\left|i\la^3\int_{\Omega}\mathds{1}_{\omega_{c}}(x)y\overline{\psi}dx\right|=o(1).
\end{equation}
and 
\begin{equation}\label{eq4}
\left| i\la^3\int_{\Omega}  c(x)u\overline{\psi}dx\right|=\frac{o(1)}{\la}.
\end{equation}
Inserting \eqref{eq3} and \eqref{eq4} into \eqref{eq2} we obatin 
\begin{equation}
\int_{\Omega}\left|\la y\right|^2dx=o(1).
\end{equation}
\textbf{Case 2.} Under the assumption (H3). We proceed in the same way as in Case 1., the only change is that we have the following two estimations instead of \eqref{est11-1} and \eqref{eq4}, by using \eqref{1eq1-lem2-pol3} and \eqref{bdd1} we get
\begin{equation}
\left|i\la^3\int_{\Omega}\mathds{1}_{\omega_c}(x)u\overline{\varphi}dx\right|=o(1)
\end{equation}
and
\begin{equation}
\left| i\la^3\int_{\Omega}  c(x)u\overline{\psi}dx\right|=o(1).
\end{equation}
Thus, we reach our desired result.
\end{proof}
\begin{Lemma}\label{lem4-pol2}
Assume that either the assumption (H2) or (H3) holds. Then,  the solution $(u, v, y, z) \in  D(\mathcal{A})$ of \eqref{n3}-\eqref{n6} satisfies the following estimations

\begin{equation}
\int_{\Omega}\left|\nabla u\right|^2dx=o(1)\quad \text{and}\quad\int_{\Omega}\left|\nabla y\right|^2dx=o(1).
\end{equation}

\end{Lemma}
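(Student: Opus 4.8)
The plan is to repeat, in the present setting, the testing argument that established Lemma~\ref{lem4-pol} in the (H1) case, now working from the elliptic identities \eqref{n8}--\eqref{n9} and from the estimates already at our disposal: the global bound \eqref{n7}, the damping-region bounds \eqref{eq1-lem1-pol2} of Lemma~\ref{Lemma1h3h4}, and — crucially — the conclusion $\|\la u\|_{L^2(\Omega)}=\|\la y\|_{L^2(\Omega)}=o(1)$ of Lemma~\ref{lem3-pol2}. Since all three inputs are valid under either (H2) or (H3), no case distinction is needed here.

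First I would multiply \eqref{n8} by $\overline{u}$, integrate over $\Omega$, and apply Green's formula (the boundary term vanishing because $u\in H_0^1(\Omega)$); using $\|F\|_{\mathcal{H}}=o(1)$ together with \eqref{n7} to bound the right-hand side by $o(1)/\la^{2}$, this gives
\begin{equation*}
\int_{\Omega}\abs{\la u}^2\,dx-a\int_{\Omega}\abs{\nabla u}^2\,dx-\int_{\Omega}b(x)\nabla v\cdot\nabla\overline{u}\,dx-i\la\int_{\Omega}c(x)y\overline{u}\,dx=\frac{o(1)}{\la^{2}}.
\end{equation*}
The first term is $o(1)$ by Lemma~\ref{lem3-pol2}. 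Since $b$ is supported in $\omega_b$, Cauchy--Schwarz, \eqref{eq1-lem1-pol2} and $\|\nabla u\|_{L^2(\Omega)}=O(1)$ give $\bigl|\int_{\Omega}b(x)\nabla v\cdot\nabla\overline{u}\,dx\bigr|\le\|b\|_{L^\infty}\|\nabla v\|_{L^2(\omega_b)}\|\nabla u\|_{L^2(\Omega)}=o(1)/\la$, while Cauchy--Schwarz together with Lemma~\ref{lem3-pol2} and \eqref{n7} give $\bigl|\la\int_{\Omega}c(x)y\overline{u}\,dx\bigr|\le\|c\|_{L^\infty}\|\la y\|_{L^2(\Omega)}\|u\|_{L^2(\Omega)}=o(1)/\la$. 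Since $a>0$, combining these yields $\int_{\Omega}\abs{\nabla u}^2\,dx=o(1)$.

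For the second estimate I would multiply \eqref{n9} by $\overline{y}$, integrate over $\Omega$, and again use Green's formula, \eqref{n7} and $\|F\|_{\mathcal{H}}=o(1)$, obtaining $\int_{\Omega}\abs{\nabla y}^2\,dx=\int_{\Omega}\abs{\la y}^2\,dx+i\la\int_{\Omega}c(x)u\overline{y}\,dx+o(1)$; the first term on the right is $o(1)$ by Lemma~\ref{lem3-pol2}, and the coupling term is bounded by $\|c\|_{L^\infty}\|\la u\|_{L^2(\Omega)}\|y\|_{L^2(\Omega)}=o(1)/\la$ using Lemma~\ref{lem3-pol2} and \eqref{n7}, so $\int_{\Omega}\abs{\nabla y}^2\,dx=o(1)$. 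I do not anticipate any genuine difficulty at this step: all the real analysis has been absorbed into the earlier lemmas (in particular Lemma~\ref{lem3-pol2}, which itself rests on the exponential stability of the auxiliary system \eqref{AUXXX} under (H2) or (H3)), and the only point requiring care is to pair each $o(1)$ factor coming from a small quantity with a genuinely bounded — or better, $O(1/\la)$ — companion factor, rather than multiplying two merely bounded terms.
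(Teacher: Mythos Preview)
Your proof is correct and follows exactly the approach the paper intends: the paper's own proof simply says ``The proof is similar to the proof of Lemma~\ref{lem4-pol},'' and your argument is precisely that lemma's multiplier argument transplanted to \eqref{n8}--\eqref{n9}, with Lemma~\ref{lem3-pol2} and \eqref{eq1-lem1-pol2} playing the roles that Lemma~\ref{lem3-pol} and \eqref{pol1} played there.
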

\begin{proof}
The proof is similar to the proof of Lemma \ref{lem4-pol}.
\end{proof}

\noindent \textbf{Proof of Theorem \ref{Theorem2}.} 
Consequently, from the results of Lemmas \ref{lem3-pol2}, and \ref{lem4-pol2} , we obtain  that $\|U\|_{\HH}=o(1)$, which contradicts \eqref{n1}. Consequently, condition \eqref{C3} holds. This implies, from Theorem \ref{bt}, the energy decay estimation \eqref{Energyp2}. The proof is thus complete.

%%%%%%%%%%%%%%%%%%%%%%%%%%%%%%%%%%%%%%%%%%%%%%%%%%%%%%%%%%%%%%%%%%%%%%%%%%%%%%%%%%%%%
\begin{Theorem}\label{Theorem3}
Assume that assumption (H4) or (H5) holds. Then, for all initial data $U_0\in D(\mathcal{A})$, there exists a constant $C_3>0$ independent of $U_0$, the energy of the system \eqref{k-v}-\eqref{boundary conditions} satisfies the following estimation
\begin{equation}\label{Energyp3}
	E(t,U)\leq \frac{C_3}{t^{\frac{2}{2+4\beta}}}\|U_0\|^2_{D(\mathcal{A})},\quad \forall t>0.
	\end{equation}
	where
	\begin{equation}\label{bbb}
\beta=\left\{\begin{array}{ccc}
2&\text{if}&\text{(H4)} \,\text{holds}\\ \\
\dfrac{3}{2}&\text{if}&\text{(H5)} \,\text{holds}.
\end{array}
\right.
\end{equation}
\end{Theorem}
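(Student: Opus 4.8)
The plan is to apply the Borichev--Tomilov theorem (Theorem \ref{bt}) with exponent $\ell=2+4\beta$, where $\beta$ is given by \eqref{bbb}. Since condition \eqref{C1} is already proved in Lemmas \ref{ker1} and \ref{ker2}, it suffices to establish
\[
\sup_{\la\in\R}\left\|(i\la I-\AA)^{-1}\right\|_{\mathcal L(\HH)}=O\left(|\la|^{2+4\beta}\right).
\]
As in the proofs of Theorems \ref{PolyStab} and \ref{Theorem2}, I would argue by contradiction: assuming this fails, there are $\la\to+\infty$, $\|U\|_{\HH}=1$ and $F=\la^{2+4\beta}(i\la I-\AA)U\to0$ in $\HH$; writing the resolvent equation componentwise (the analogue of \eqref{n3}--\eqref{n6}, with $\la^{-2}$ replaced by $\la^{-(2+4\beta)}$) and taking the real part of $\langle(i\la I-\AA)U,U\rangle_{\HH}$ gives $\int_\Omega b(x)|\nabla v|^2=o(1)\la^{-(2+4\beta)}$. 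Exactly as in Lemma \ref{lem1-pol}, and using that $\omega_b$ meets $\Gamma$ (so Poincar\'e applies on $\omega_b$), this yields the localized estimates
\[
\|\nabla v\|_{L^2(\omega_b)}=\|v\|_{L^2(\omega_b)}=o(1)\la^{-(1+2\beta)},\qquad \|u\|_{L^2(\omega_b)}=\|\nabla u\|_{L^2(\omega_b)}=o(1)\la^{-(2+2\beta)},
\]
together with the global bounds $\|u\|_{L^2(\Omega)}=\|y\|_{L^2(\Omega)}=O(1)\la^{-1}$; eliminating $v$ and $z$ then produces the analogues of \eqref{n8}--\eqref{n9}.

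The decisive external ingredient is the auxiliary problem \eqref{aux4}, two decoupled copies of \eqref{AUXXX} (with wave speeds $\sqrt a$ and $1$). By Remark \ref{AUX-1W}, the energy of \eqref{AUXXX} decays like $t^{-1}$ under (H4) and like $t^{-4/3}$ under (H5); by the Borichev--Tomilov characterization this is equivalent to $i\R\subset\rho(\mathcal A_{aux})$ and $\|(i\la I-\mathcal A_{aux})^{-1}\|_{\mathcal L(\HH_{aux})}=O(|\la|^\beta)$, with precisely the $\beta$ of \eqref{bbb}. Feeding $(0,-u,0,-y)\in\HH_{aux}$ into this resolvent gives a solution $(\varphi,\psi)$ of \eqref{aux4} satisfying
\[
\|\la\varphi\|_{L^2}+\|\nabla\varphi\|_{L^2}+\|\la\psi\|_{L^2}+\|\nabla\psi\|_{L^2}\le M|\la|^\beta\left(\|u\|_{L^2}+\|y\|_{L^2}\right)=O\left(|\la|^{\beta-1}\right),
\]
hence $\|\varphi\|_{L^2}=\|\psi\|_{L^2}=O(|\la|^{\beta-2})$.

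Next I would introduce a cutoff $\rho\in C_c^\infty(\R^N)$ with $\rho\equiv1$ on $\omega_c$ and $\supp\rho\subset\omega_b$ --- available since $\omega_c\subset\omega_b$, with no geometric hypothesis --- and multiply the $u$-equation by $\la\rho\overline y$; proceeding as in Lemma \ref{lem2-pol2}, the $\omega_b$-estimates absorb all terms except the damping term and one gets $\int_{\omega_c}|\la y|^2=o(1)\la^{-2\beta}$. Then, multiplying the $u$-equation by $\la^2\overline\varphi$ and the $y$-equation by $\la^2\overline\psi$, integrating by parts, and substituting $\la^2\overline\varphi+a\Delta\overline\varphi=\overline u-i\la\mathds{1}_{\omega_c}\overline\varphi$ and $\la^2\overline\psi+\Delta\overline\psi=\overline y-i\la\mathds{1}_{\omega_c}\overline\psi$ from \eqref{aux4}, one extracts $\|\la u\|_{L^2(\Omega)}^2$ and $\|\la y\|_{L^2(\Omega)}^2$ together with remainders of the types $\la^3\int_{\omega_c}u\overline\varphi$, $\la^2\int_\Omega b(x)\nabla v\cdot\nabla\overline\varphi$, $\la^3\int_\Omega c(x)y\overline\varphi$ (and, from the $y$-equation, $\la^3\int_{\omega_c}y\overline\psi$, $\la^3\int_\Omega c(x)u\overline\psi$), plus $F$-contributions. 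Using Cauchy--Schwarz, the localized bounds, $\int_{\omega_c}|\la y|^2=o(1)\la^{-2\beta}$, the localizations $\supp b\subset\overline{\omega_b}$ and $\supp c\subset\overline{\omega_c}$, and $\|\varphi\|_{L^2},\|\psi\|_{L^2}=O(|\la|^{\beta-2})$, each remainder is $o(1)$ exactly when $\ell\ge4\beta+2$; this is what fixes the exponent $2+4\beta$. Thus $\|\la u\|_{L^2(\Omega)}=o(1)$ and $\|\la y\|_{L^2(\Omega)}=o(1)$, and then testing the $u$-equation with $\overline u$ and the $y$-equation with $\overline y$ as in Lemma \ref{lem4-pol} gives $\|\nabla u\|_{L^2(\Omega)}=\|\nabla y\|_{L^2(\Omega)}=o(1)$. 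Since $v=i\la u+o(1)$ and $z=i\la y+o(1)$ in $L^2(\Omega)$, we obtain $\|U\|_{\HH}=o(1)$, contradicting $\|U\|_{\HH}=1$; hence the resolvent estimate holds and \eqref{Energyp3} follows.

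The main obstacle is the bookkeeping of powers of $\la$. Because \eqref{AUXXX} is only polynomially, not exponentially, stable, each use of $(i\la I-\mathcal A_{aux})^{-1}$ costs a factor $|\la|^\beta$, and the coupling remainder $\la^3\int_\Omega c(x)y\overline\varphi$ --- already the delicate term in the exponentially stable case of Theorem \ref{Theorem2} --- now requires the chain $\|u\|_{L^2(\omega_b)}=o(1)\la^{-(2+2\beta)}\Rightarrow\int_{\omega_c}|\la y|^2=o(1)\la^{-2\beta}\Rightarrow\|y\|_{L^2(\omega_c)}=o(1)\la^{-(\beta+1)}$ to close only when $\ell\ge 2+4\beta$. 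One must verify carefully that this, together with $\|\varphi\|_{L^2}=O(|\la|^{\beta-2})$, makes precisely that term $o(1)$ and that no other remainder imposes a larger exponent. A secondary point to record is that the cutoff step producing $\int_{\omega_c}|\la y|^2=o(1)$ uses only $\omega_c\subset\omega_b$ --- and no geometric control --- which is exactly what allows (H4) and (H5), where $\omega_b$ and $\omega_c$ satisfy no geometric condition, to fall within this scheme.
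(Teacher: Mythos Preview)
Your proposal is correct and follows essentially the same route as the paper: the contradiction argument via Borichev--Tomilov with $\ell=2+4\beta$, the dissipation estimate giving $\|\nabla v\|_{L^2(\omega_b)}=o(1)\la^{-(1+2\beta)}$ and its consequences on $\omega_b$, the cutoff $\rho$ supported in $\omega_b$ yielding $\int_{\omega_c}|\la y|^2=o(1)\la^{-2\beta}$, the auxiliary decoupled problem \eqref{aux4} with the polynomial resolvent bound $O(|\la|^\beta)$ coming from Remark \ref{AUX-1W}, and the multipliers $\la^2\overline\varphi$, $\la^2\overline\psi$ to extract $\|\la u\|_{L^2(\Omega)}$ and $\|\la y\|_{L^2(\Omega)}$ --- all of this matches the paper's Lemmas \ref{Lemma1h5h6}--\ref{lem4-pol3} step for step, including the power-counting that singles out $\ell=2+4\beta$.
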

\noindent Following Theorem \ref{bt} of Borichev and Tomilov (see Appendix), the polynomial energy decay \eqref{Energyp3} holds if \eqref{C1} and 
\begin{equation}\label{C4}\tag{${\rm{C4}}$}
\sup_{\la\in \R}\left\|(i\la I-\AA)^{-1}\right\|_{\mathcal{L}(\mathcal{H})}=O\left(\abs{\la}^{{2+4\beta}}\right)
\end{equation}
are satisfied. Since Condition \eqref{C1} is already satisfied (see Lemmas  \ref{ker1} and \ref{ker2}). We will prove condition \eqref{C4} by an argument of contradiction. For this purpose, suppose that \eqref{C4} is false,  then there exists 
$\left\{\left(\la_n,U_n:=(u_n,v_n,y_n,z_n)^\top\right)\right\}\subset \R^{\ast}\times D(\AA)$ with 
\begin{equation}\label{m1}
\abs{\la_n}\to +\infty \quad \text{and}\quad \|U_n\|_{\mathcal{H}}=\|(u_n,v_n,y_n,z_n)\|_{\mathcal{H}}=1, 
\end{equation}
such that 
\begin{equation}\label{m2}
\la_n^{2+4\beta}\left(i\la_nI-\AA\right)U_n=F_n:=(f_{1,n},f_{2,n},f_{3,n},f_{4,n})^{\top}\to 0 \ \ \text{in}\ \ \mathcal{H}. 
\end{equation}
For simplicity, we drop the index $n$. Equivalently, from \eqref{m2}, we have
\begin{eqnarray}
i\la u-v&=&\frac{f_1}{\la^{2+4\beta}} \ \text{in}\ \ H_0^1(\Omega),\label{m3}\\ \noalign{\medskip}
i\la v-\divv(a \nabla u+b(x)\nabla v)+c(x)z&=&\frac{f_2}{\la^{2+4\beta}}\ \ \text{in}\ \ L^2(\Omega),\label{m4}\\ \noalign{\medskip}
i\la y-z&=&\frac{f_3}{\la^{2+4\beta}}\ \ \text{in}\ \ H_0^1(\Omega),\label{m5}\\ \noalign{\medskip}
i\la z- \divv(\nabla y)-c(x)v&=&\frac{f_4}{\la^{2+4\beta}}\ \ \text{in}\ \ L^2(\Omega).\label{m6}
\end{eqnarray}
Here we will check the condition \eqref{C4} by finding a contradiction with \eqref{m1} such as $\left\|U\right\|_{\HH}=o(1)$. 
From Equations \eqref{m1}, \eqref{m3} and \eqref{m5},  we obtain

\begin{equation}\label{m7}
\|u\|_{L^2(\Omega)}=\frac{O(1)}{\la}\quad\text{and}\quad \|y\|_{L^2(\Omega)}=\frac{O(1)}{\la}.
\end{equation}

\begin{Lemma}\label{Lemma1h5h6}
Under the assumptions (H4) or (H5). We have that the solution $(u, v, y, z) \in  D(\mathcal{A})$ of \eqref{m3}-\eqref{m6} satisfies the following estimations
\begin{equation}\label{eq1-lem1-pol3}
\|\nabla v\|_{L^2(\omega_b)}=\frac{o(1)}{\la^{1+2\beta}}\quad \|\nabla u\|_{L^2(\omega_b)}=\frac{o(1)}{\la^{2+2\beta}}\quad \text{and}\quad  \|u\|_{L^2(\omega_b)}=\frac{o(1)}{\la^{2+2\beta}}.
\end{equation}
\end{Lemma}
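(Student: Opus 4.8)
The plan is to follow the proof of Lemma \ref{lem1-pol} essentially verbatim, carrying the extra factor $\la^{2+4\beta}$ that comes from the contradiction hypothesis \eqref{m2} through each estimate.

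First I would take the inner product of \eqref{m2} with $U$ in $\mathcal{H}$. By \eqref{kvdissipation} one has $\Re\left(\left\langle\mathcal{A}U,U\right\rangle_{\mathcal{H}}\right)=-\int_{\Omega}b(x)\abs{\nabla v}^2dx$, while $\|F\|_{\mathcal{H}}=o(1)$ and $\|U\|_{\mathcal{H}}=1$; hence
\[
\int_{\Omega}b(x)\abs{\nabla v}^2dx=\frac{1}{\la^{2+4\beta}}\,\Re\left(\left\langle F,U\right\rangle_{\mathcal{H}}\right)=\frac{o(1)}{\la^{2+4\beta}}.
\]
Since $b(x)\geq b_0>0$ on $\omega_b$ by \eqref{b}, this already yields the first estimate $\|\nabla v\|_{L^2(\omega_b)}=o(1)/\la^{1+2\beta}$ in \eqref{eq1-lem1-pol3}.

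Next I would invoke a localized Poincaré inequality on $\omega_b$. Under (H4) (respectively (H5)) the set $\omega_b$ is a vertical strip of the square $\Omega=(0,L)\times(0,L)$ joining the two sides $\{y=0\}$ and $\{y=L\}$, on which $v\in H_0^1(\Omega)$ vanishes; a one--dimensional Poincaré inequality in the $y$ variable then gives $\|v\|_{L^2(\omega_b)}\leq C\|\partial_y v\|_{L^2(\omega_b)}\leq C\|\nabla v\|_{L^2(\omega_b)}=o(1)/\la^{1+2\beta}$. Finally, writing \eqref{m3}, which holds in $H_0^1(\Omega)$, as $i\la u=v+\la^{-(2+4\beta)}f_1$ and, after differentiation, $i\la\nabla u=\nabla v+\la^{-(2+4\beta)}\nabla f_1$, and using that $\|\nabla f_1\|_{L^2(\Omega)}=o(1)$ (since $\|F\|_{\mathcal{H}}\to0$) together with Poincaré on $\Omega$ to bound $\|f_1\|_{L^2(\Omega)}$, I would divide by $\la$ to obtain
\[
\|u\|_{L^2(\omega_b)}=\frac{o(1)}{\la^{2+2\beta}}\qquad\text{and}\qquad\|\nabla u\|_{L^2(\omega_b)}=\frac{o(1)}{\la^{2+2\beta}},
\]
the two remaining estimates.

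The computation is pure bookkeeping of powers of $\la$; the only ingredient that really uses the geometry is the localized Poincaré inequality on $\omega_b$, which is why the strip configuration of (H4)--(H5) (a band crossing the square in a direction along which the Dirichlet condition is imposed) is used rather than an arbitrary open subset. I do not anticipate any serious obstacle: the statement is simply the analogue of Lemma \ref{lem1-pol} adapted to the weaker resolvent growth $O(\abs{\la}^{2+4\beta})$.
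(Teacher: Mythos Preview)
Your proof is correct and follows the same route as the paper, which simply states that the argument is ``similar to that of Lemma \ref{lem1-pol}''. Your explicit justification of the localized Poincar\'e inequality on the strip $\omega_b$ (via the Dirichlet condition on the horizontal sides $\{y=0\}$ and $\{y=L\}$) is a welcome detail that the paper leaves implicit.
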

\noindent The proof of this Lemma is similar to that of Lemma \ref{lem1-pol}.

\noindent Inserting Equations \eqref{m3} and \eqref{m5} into \eqref{m4} and \eqref{m6}, we get
\begin{eqnarray}
\lam ^{2}u+\divv(a\nabla u+b(x)\nabla v)-i\la c(x)y&=&-\frac{f_2}{\la^{2+4\beta}}-c(x)\frac{f_3}{\la^{2+4\beta}}-\frac{if_1}{\la^{1+4\beta}},\label{m8}\\
\la ^{2}y+\Delta y+i\la c(x)u&=&-\frac{f_4}{\la^{2+4\beta}}+c(x)\frac{f_1}{\la^{2+4\beta}}-\frac{if_3}{\la^{1+4\beta}}.\label{m9}
\end{eqnarray}

\begin{Lemma}\label{lem2-pol3}
Assume that assumption (H4) or (H5) holds. Then, the solution $(u, v, y, z) \in  D(\mathcal{A})$ of \eqref{m3}-\eqref{m6} satisfies the following estimation
\begin{equation}\label{eq1-lem2-pol3}
\int_{\omega_c}\abs{\la y}^2dx=\frac{o(1)}{\la^{2\beta}}.
\end{equation}
\end{Lemma}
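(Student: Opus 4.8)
The plan is to follow the proof of Lemma~\ref{lem2-pol} (equivalently Lemma~\ref{lem2-pol2}, Case~1), only tracking the extra negative powers of $\la$ produced by the weaker a~priori bounds of Lemma~\ref{Lemma1h5h6}. First I fix a cut-off $\rho\in C_c^\infty(\R^N)$ with
\begin{equation*}
\rho=1 \ \text{ on } \omega_c,\qquad \rho=0 \ \text{ on } \Omega\setminus\omega_b,\qquad 0\le\rho\le1 ,
\end{equation*}
which is possible both under (H4) and under (H5) since in each case $\omega_c$ is a vertical sub-strip whose closure meets $\Omega$ inside $\omega_b$; note that $\supp\rho\cup\supp\nabla\rho$ is contained in $\omega_b$, and that the boundary terms arising below are harmless because $y=0$ on $\Gamma$ and is multiplied by $\rho\overline{y}$.

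Next I multiply \eqref{m8} by $\la\rho\overline{y}$, integrate over $\Omega$, and use Green's formula on the term $\divv(a\nabla u+b(x)\nabla v)$, which gives
\begin{equation*}
\la^3\int_\Omega u\rho\overline{y}\,dx-\la\int_\Omega\left(a\nabla u+b(x)\nabla v\right)\cdot\left(\overline{y}\nabla\rho+\rho\nabla\overline{y}\right)dx-i\int_\Omega c(x)\rho\,\abs{\la y}^2 dx = \frac{o(1)}{\la^{2\beta}} ,
\end{equation*}
the right-hand side collecting the $F$-contributions: since $\|F\|_{\mathcal{H}}=o(1)$ and, by \eqref{m7}, $\|y\|_{L^2(\Omega)}=\frac{O(1)}{\la}$, these are in fact of order $\frac{o(1)}{\la^{1+4\beta}}$, hence far smaller than the claimed order.

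Then I bound the first two integrals by Cauchy--Schwarz, invoking Lemma~\ref{Lemma1h5h6} together with $\|y\|_{L^2(\Omega)}=\frac{O(1)}{\la}$ and the crude bound $\|\nabla y\|_{L^2(\Omega)}\le\|U\|_{\mathcal{H}}=1$:
\begin{equation*}
\left|\la^3\int_\Omega u\rho\overline{y}\,dx\right|\le\la^3\,\|u\|_{L^2(\omega_b)}\,\|y\|_{L^2(\omega_b)}=\frac{o(1)}{\la^{2\beta}},
\end{equation*}
\begin{equation*}
\left|\la\int_\Omega\left(a\nabla u+b(x)\nabla v\right)\cdot\left(\overline{y}\nabla\rho+\rho\nabla\overline{y}\right)dx\right|\le C\la\left(\|\nabla u\|_{L^2(\omega_b)}+\|\nabla v\|_{L^2(\omega_b)}\right)\left(\|y\|_{L^2(\omega_b)}+\|\nabla y\|_{L^2(\Omega)}\right)=\frac{o(1)}{\la^{2\beta}}.
\end{equation*}
Inserting these bounds and using the definition of $c$ (so that $c(x)\rho(x)=c_0\mathds{1}_{\omega_c}(x)$) one gets $c_0\int_{\omega_c}\abs{\la y}^2 dx=\frac{o(1)}{\la^{2\beta}}$, i.e. \eqref{eq1-lem2-pol3}.

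I expect the only subtlety to be the exponent bookkeeping: among the contributions above, $\la^3\int_\Omega u\rho\overline y\,dx$ and the piece $\la\int_\Omega b(x)\nabla v\cdot\rho\nabla\overline y\,dx$ are exactly of borderline size $\frac{o(1)}{\la^{2\beta}}$ — they require the sharp Lemma~\ref{Lemma1h5h6} estimates $\|u\|_{L^2(\omega_b)}=\frac{o(1)}{\la^{2+2\beta}}$ and $\|\nabla v\|_{L^2(\omega_b)}=\frac{o(1)}{\la^{1+2\beta}}$, together with only the global bound $\|\nabla y\|_{L^2(\Omega)}=O(1)$, since at this stage no better control of $\nabla y$ on $\omega_b$ is available — whereas every remaining term is of order $\frac{o(1)}{\la^{1+2\beta}}$ or smaller. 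This is precisely the mechanism that forces the loss $\la^{-2\beta}$ in the statement relative to Lemma~\ref{lem2-pol}.
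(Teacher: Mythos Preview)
Your argument is correct and follows essentially the same route as the paper's own proof: the same cut-off $\rho$, the same multiplier $\la\rho\overline{y}$ applied to \eqref{m8}, the same use of Lemma~\ref{Lemma1h5h6} and \eqref{m7} via Cauchy--Schwarz to bound the two integrals, and the same conclusion from the definition of $c$. Your additional remarks on the vanishing boundary terms and the explicit exponent bookkeeping in the final paragraph only make the argument more transparent.
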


\begin{proof}
Assume that either assumption (H4) or (H5) holds. Define the function $\rho\in C_{c}^{\infty}(\R^N)$ such that
\begin{equation}\label{rho}
\rho(x)=\left\{\begin{array}{ccc}
1&\text{if}&x\in \omega_{c},\\
0&\text{if}& x\in \Omega\backslash\omega_b,\\
0\leq\rho\leq 1&& elsewhere.
\end{array}\right.
\end{equation}
Now, multiply \eqref{m8} by $\la \rho \overline{y}$, integrate over $\Omega$ and using Green's formula, \eqref{m7} and the fact that $\|F\|_{\mathcal{H}}=\|(f_1,f_2,f_3,f_4)\|_{\mathcal{H}}=o(1)$, we get

\begin{equation}\label{est1-lem2p}
\la^3\int_{\Omega}u\rho\overline{y}dx-\la\int_{\Omega}\left(a\nabla u+b(x)\nabla v\right)\cdot(\nabla\rho \overline{y}+\rho\nabla\overline{y})dx-i\int_{\Omega}c(x)\rho\abs{\la y}^2dx=\frac{o(1)}{\la^{1+4\beta}}.
\end{equation}
Using \eqref{m7},  \eqref{eq1-lem1-pol3} and Cauchy-Schwarz we obtain
\begin{equation}\label{est2-lem2p}
\left|\la^3\int_{\Omega}u\rho\overline{y}dx\right|\leq \la^3 \|u\|_{L^2(\omega_b)}\cdot\|y\|_{L^2(\omega_b)}=\frac{o(1)}{\la^{2\beta}}
\end{equation}
and 
\begin{equation}\label{est3-lem2p}
\left|\la\int_{\Omega}\left(a\nabla u+b(x)\nabla v\right)\cdot(\nabla\rho \overline{y}+\rho\nabla\overline{y})dx\right|=\frac{o(1)}{\la^{2\beta}}.
\end{equation}
Thus, using Equation \eqref{est2-lem2p} and \eqref{est3-lem2p} in \eqref{est1-lem2p} we obtain our desired result.
\end{proof}

\begin{Lemma}\label{lem3-pol3}
Assume that the assumption (H4) or (H5) holds. Then,  the solution $(u, v, y, z) \in  D(\mathcal{A})$ of \eqref{m3}-\eqref{m6} satisfies the following estimations
\begin{equation}
\int_{\Omega}\left|\la u\right|^2dx=o(1)\quad \text{and}\quad\int_{\Omega}\left|\la y\right|^2dx=o(1).
\end{equation}
\end{Lemma}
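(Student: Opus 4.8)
The plan is to reproduce the argument of Lemma~\ref{lem3-pol2}, with the exponential stability of the auxiliary damped wave system replaced by its polynomial counterpart; this is the origin of the extra power $\lambda^{\beta}$ that appears below. First I would let $(u,v,y,z)$ be the solution of \eqref{m3}--\eqref{m6} and introduce $\varphi,\psi\in H^2(\Omega)\cap H_0^1(\Omega)$ solving the decoupled elliptic system \eqref{aux4} with datum $(u,y)$. Since (H4) (resp.\ (H5)) holds, Remark~\ref{AUX-1W} states that the energy of \eqref{AUXXX} decays like $t^{-1}$ (resp.\ $t^{-4/3}$); by Theorem~\ref{bt} this is equivalent to the generator of \eqref{AUXXX} having $i\R$ in its resolvent set together with the bound $O(|\lambda|^{\beta})$ for the resolvent along the imaginary axis, $\beta$ being as in \eqref{bbb}. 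Consequently \eqref{aux4} is uniquely solvable and, in place of \eqref{bdd1}, its solution obeys
\[
\|\lambda\varphi\|_{L^2(\Omega)}+\|\nabla\varphi\|_{L^2(\Omega)}+\|\lambda\psi\|_{L^2(\Omega)}+\|\nabla\psi\|_{L^2(\Omega)}\leq M\lambda^{\beta}\bigl(\|u\|_{L^2(\Omega)}+\|y\|_{L^2(\Omega)}\bigr),
\]
which by \eqref{m7} is $O(\lambda^{\beta-1})$; in particular $\|\varphi\|_{L^2(\Omega)}$ and $\|\psi\|_{L^2(\Omega)}$ are $O(\lambda^{\beta-2})$.

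Next I would mimic the computation leading to \eqref{est10-1}: multiply \eqref{m8} by $\lambda^2\overline{\varphi}$, integrate over $\Omega$, use Green's formula, and replace $\lambda^2\overline{\varphi}+a\Delta\overline{\varphi}$ by $\overline{u}-i\lambda\mathds{1}_{\omega_c}(x)\overline{\varphi}$ from the first equation of \eqref{aux4} (conjugated, $\lambda$ being real). Absorbing the right-hand side with $\|F\|_{\mathcal{H}}=o(1)$ and the bound on $\|\varphi\|_{L^2(\Omega)}$ should produce
\[
\int_{\Omega}|\lambda u|^2\,dx-i\lambda^3\int_{\Omega}\mathds{1}_{\omega_c}(x)u\overline{\varphi}\,dx-\lambda^2\int_{\Omega}b(x)\nabla v\cdot\nabla\overline{\varphi}\,dx-i\lambda^3\int_{\Omega}c(x)y\overline{\varphi}\,dx=o(1),
\]
and I would then check that each of the last three integrals is $o(1)$, which is where the exponents matter. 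The $b$--term is bounded by $C\lambda^2\|\nabla v\|_{L^2(\omega_b)}\|\nabla\varphi\|_{L^2(\Omega)}=o(1)\lambda^{-\beta}$ via Lemma~\ref{Lemma1h5h6}; the $\mathds{1}_{\omega_c}$--term by $\lambda^3\|u\|_{L^2(\omega_b)}\|\varphi\|_{L^2(\Omega)}=o(1)\lambda^{-1-\beta}$, again via Lemma~\ref{Lemma1h5h6} and $\omega_c\subset\omega_b$; and the $c$--coupling term by $C\lambda^2\|\lambda y\|_{L^2(\omega_c)}\|\varphi\|_{L^2(\Omega)}=o(1)$, using $\supp c\subset\omega_c$ and Lemma~\ref{lem2-pol3}, which provides $\int_{\omega_c}|\lambda y|^2\,dx=o(1)\lambda^{-2\beta}$. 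This yields $\int_{\Omega}|\lambda u|^2\,dx=o(1)$.

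The symmetric argument applied to \eqref{m9} (multiply by $\lambda^2\overline{\psi}$, integrate, apply Green's formula, and substitute $\lambda^2\overline{\psi}+\Delta\overline{\psi}=\overline{y}-i\lambda\mathds{1}_{\omega_c}(x)\overline{\psi}$ from the second equation of \eqref{aux4}) should give
\[
\int_{\Omega}|\lambda y|^2\,dx-i\lambda^3\int_{\Omega}\mathds{1}_{\omega_c}(x)y\overline{\psi}\,dx+i\lambda^3\int_{\Omega}c(x)u\overline{\psi}\,dx=o(1),
\]
where the $\mathds{1}_{\omega_c}$--term is $\leq C\lambda^2\|\lambda y\|_{L^2(\omega_c)}\|\psi\|_{L^2(\Omega)}=o(1)$ by Lemma~\ref{lem2-pol3}, and the $c$--coupling term is $\leq C\lambda^3\|u\|_{L^2(\omega_b)}\|\psi\|_{L^2(\Omega)}=o(1)\lambda^{-1-\beta}$ by $\omega_c\subset\omega_b$ and Lemma~\ref{Lemma1h5h6}. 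Hence $\int_{\Omega}|\lambda y|^2\,dx=o(1)$, which finishes the proof.

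The main obstacle I anticipate is not conceptual but the bookkeeping of powers of $\lambda$: the precise values $\beta=2$ under (H4) and $\beta=3/2$ under (H5) are exactly what makes the gains of Lemmas~\ref{Lemma1h5h6} and~\ref{lem2-pol3} (which trace back to the $\lambda^{2+4\beta}$ normalization in \eqref{m2}) dominate the loss $\lambda^{\beta}$ carried by the auxiliary resolvent estimate. A minor additional point is that \eqref{AUXXX} is stated with unit wave speed, whereas the $\varphi$--equation of \eqref{aux4} involves $a\Delta$; the polynomial decay transfers to this case by a spatial rescaling, as was already implicitly used in passing from \eqref{AUXXX} to \eqref{bdd1} in the proof of Lemma~\ref{lem3-pol2}.
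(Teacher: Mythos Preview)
Your proposal is correct and follows essentially the same approach as the paper's own proof: introduce the same decoupled auxiliary problems \eqref{aux4} (labeled \eqref{aux4p} in the paper), invoke the polynomial resolvent bound coming from Remark~\ref{AUX-1W} and Theorem~\ref{bt} to obtain the estimate with the extra factor $|\lambda|^{\beta}$, then multiply \eqref{m8} and \eqref{m9} by $\lambda^2\overline{\varphi}$ and $\lambda^2\overline{\psi}$ and estimate each term exactly as you describe. Your bookkeeping of the powers of $\lambda$ matches the paper's computations \eqref{est7-1p}--\eqref{eq4p}, and your closing remarks on the $a\Delta$ rescaling simply make explicit what the paper leaves implicit.
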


\begin{proof}
Let $\varphi, \psi \in H^2(\Omega)\cap H_0^1(\Omega)$ be the solution of the following system
\begin{equation}\label{aux4p}
\left\{
\begin{array}{lll}
\lam ^{2}\varphi+a\Delta\varphi-i\lam\mathds{1}_{\omega_c}(x)\varphi=u,&\text{in} &\Omega\\ 
\lam ^{2}\psi+\Delta\psi-i\lam\mathds{1}_{\omega_c}(x)\psi=y, &\text{in}& \Omega\\ 
\varphi=\psi=0,&\text{on}&\Gamma\\ 
\end{array}\right.
\end{equation}
where $(u,v,y,z)$ is the solution of \eqref{m3}-\eqref{m6}. 
We suppose that the energy of the System  \eqref{AUXXX} satisfies the following estimate
$$E(t,U)\leq \frac{C}{t^{\frac{2}{\beta}}}\|U_0\|^2_{D(\mathcal{A})},\quad \forall t>0.$$
When assumption (H4) holds, we have that System \eqref{AUXXX} is polynomially stable with an energy decay rate $t^{-1}$, i.e $\beta=2$. Howerver, when assumption (H5) holds then we have that System \eqref{AUXXX} is polynomially stable with an energy decay rate $t^{-\frac{4}{3}}$, i.e $\beta=\frac{3}{2}$. Thus,  there exists $M>0$ such that system \eqref{aux4p} satisfies the following estimation
\begin{equation}\label{bdd1p}
\|\lam\varphi\|_{L^{2}(\Omega)}+\|\nabla\varphi\|_{L^{2}(\Omega)}+\|\lam\psi\|_{L^{2}(\Omega)}+\|\nabla\psi\|_{L^{2}(\Omega)}\leq M\abs{\la}^{\beta}\left(\| u\|_{L^{2}(\Omega)}+\| y\|_{L^{2}(\Omega)}\right).
\end{equation}
Assuming that (H4) or (H5) holds.  Multiply \eqref{m8} by $\la^2\overline{\varphi}$ and integrate over $\Omega$, and using Green's formula, Equation \eqref{bdd1}, and the fact that $\|F\|_{\mathcal{H}}=\|(f_1,f_2,f_3,f_4)\|_{\mathcal{H}}=o(1)$, we obtain
\begin{equation}\label{est6-1p}
\int_{\Omega}\left(\la^2\overline{\varphi}+a\Delta\overline{ \varphi}\right)\la^2 u dx-\la^{2}\int_{\Omega}b(x)\nabla v\cdot\nabla\overline{\varphi}dx-\int_{\Omega} i\la^3 c(x)y\overline{\varphi}dx=\frac{o(1)}{\la^{1+3\beta}}.
\end{equation}
From Equation \eqref{eq1-lem1-pol3} and \eqref{bdd1p}, we obtain 
\begin{equation}\label{est7-1p}
\left|\la^{2}\int_{\Omega}b(x)\nabla v\nabla\overline{\varphi}dx\right|=\frac{o(1)}{\la^{\beta}}.
\end{equation}
Now, using System \eqref{aux4p} and Equation \eqref{est7-1p} in \eqref{est6-1p}, we get
\begin{equation}\label{est10-1p}
\int_{\Omega}\left|\la u\right|^2dx-i\la^3\int_{\Omega}\mathds{1}_{\omega_{c}}(x)u\overline{\varphi}dx-i\la^3 \int_{\Omega} c(x)y\overline{\varphi}dx=\frac{o(1)}{\la^{\beta}}.
\end{equation}
By using \eqref{eq1-lem1-pol3}, \eqref{bdd1p} and the fact that $\omega_c\subset\omega_b$
\begin{equation}\label{est11-1p}
\left|i\la^3\int_{\Omega}\mathds{1}_{\omega_c}(x)u\overline{\varphi}dx\right|\leq \la^3 \|u\|_{L^2(\omega_c)}\cdot\|\varphi\|_{L^2(\Omega)}=\frac{o(1)}{\la^{1+\beta}}.
\end{equation}
Using \eqref{eq1-lem2-pol3} and \eqref{bdd1p}, we get that
\begin{equation}\label{est12-1p}
\left|i\la^3\int_{\Omega}  c(x)y\overline{\varphi}dx\right|=o(1).
\end{equation}
Now, inserting Equation \eqref{est11-1p} and \eqref{est12-1p} into \eqref{est10-1p},  we get
\begin{equation}\label{est12p}
\int_{\Omega}\left|\la u\right|^2dx=o(1).
\end{equation}
Multiply \eqref{m9} by $\la^2\overline{\psi}$ and integrate over $\Omega$, and using Green's formula, Equation \eqref{bdd1p}, and the fact that $\|F\|_{\mathcal{H}}=\|(f_1,f_2,f_3,f_4)\|_{\mathcal{H}}=o(1)$, we obtain
\begin{equation}\label{eq1p}
\int_{\Omega}\left(\la^2\overline{\psi}+\Delta\overline{ \psi}\right)\la^2 y dx+i\la^3\int_{\Omega}  c(x)u\overline{\psi}dx=\frac{o(1)}{\la^{1+3\beta}}.
\end{equation}
 By using System \eqref{aux4p} in \eqref{eq1p} we get
 \begin{equation}\label{eq2p}
\int_{\Omega}\left|\la y\right|^2dx-i\la^3\int_{\Omega}\mathds{1}_{\omega_{c}}(x)y\overline{\psi}dx+i\la^3\int_{\Omega}  c(x)u\overline{\psi}dx=\frac{o(1)}{\la^{1+3\beta}}. \end{equation}
Using \eqref{eq1-lem1-pol3}, \eqref{eq1-lem2-pol3}, and \eqref{bdd1p} and the fact that $\omega_c\subset\omega_b$, we get
\begin{equation}\label{eq3p}
\left|i\la^3\int_{\Omega}\mathds{1}_{\omega_{c}}(x)y\overline{\psi}dx\right|=o(1).
\end{equation}
and 
\begin{equation}\label{eq4p}
\left| i\la^3\int_{\Omega}  c(x)u\overline{\psi}dx\right|=\frac{o(1)}{\la^{1+\beta}}.
\end{equation}
Inserting \eqref{eq3p} and \eqref{eq4p} into \eqref{eq2p} we obtain 
\begin{equation}
\int_{\Omega}\left|\la y\right|^2dx=o(1).
\end{equation}
\end{proof}
\begin{Lemma}\label{lem4-pol3}
Assume that either the assumption (H4) or (H5) holds. Then,  the solution $(u, v, y, z) \in  D(\mathcal{A})$ of \eqref{m3}-\eqref{m6} satisfies the following estimations
\begin{equation}
\int_{\Omega}\left|\nabla u\right|^2dx=o(1)\quad \text{and}\quad\int_{\Omega}\left|\nabla y\right|^2dx=o(1).
\end{equation}

\end{Lemma}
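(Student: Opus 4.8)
The plan is to mimic the proofs of Lemmas \ref{lem4-pol} and \ref{lem4-pol2}: test the reduced equations \eqref{m8} and \eqref{m9} against $\overline u$ and $\overline y$ respectively, and then absorb every remaining term using the localized and global $L^2$-bounds already at our disposal. First I would multiply \eqref{m8} by $\overline u$, integrate over $\Omega$ and apply Green's formula (the boundary term vanishing since $u\in H_0^1(\Omega)$), to obtain
\begin{equation*}
\int_\Omega |\la u|^2\,dx - a\int_\Omega |\nabla u|^2\,dx - \int_\Omega b(x)\nabla v\cdot\nabla\overline u\,dx - i\la\int_\Omega c(x)\,y\,\overline u\,dx = \frac{o(1)}{\la}.
\end{equation*}
Here the right-hand side is $o(1)$ because $\|F\|_{\mathcal{H}}=o(1)$ and $\|u\|_{L^2(\Omega)}=O(1)/\la$ by \eqref{m7} (and Poincar\'e's inequality for the $f_1$-contribution). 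For the $b$-term, since $b$ vanishes outside $\omega_b$, Cauchy--Schwarz together with $\|\nabla v\|_{L^2(\omega_b)}=o(1)/\la^{1+2\beta}$ from Lemma \ref{Lemma1h5h6} and the a priori bound $\|\nabla u\|_{L^2(\Omega)}\le\|U\|_{\mathcal{H}}=1$ shows it is $o(1)$. For the coupling term, $\bigl|i\la\int_\Omega c(x)\,y\,\overline u\,dx\bigr|\le \|c\|_{L^\infty}\,\|\la u\|_{L^2(\Omega)}\,\|y\|_{L^2(\Omega)}$, which is $o(1)$ by Lemma \ref{lem3-pol3} and \eqref{m7}. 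Since $\int_\Omega|\la u|^2\,dx=o(1)$ by Lemma \ref{lem3-pol3}, we conclude $a\int_\Omega|\nabla u|^2\,dx=o(1)$, hence $\int_\Omega|\nabla u|^2\,dx=o(1)$ since $a>0$.

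Next I would repeat the argument with \eqref{m9} tested against $\overline y$: multiplying, integrating over $\Omega$ and using Green's formula gives
\begin{equation*}
\int_\Omega |\la y|^2\,dx - \int_\Omega |\nabla y|^2\,dx + i\la\int_\Omega c(x)\,u\,\overline y\,dx = \frac{o(1)}{\la},
\end{equation*}
and the coupling term is again bounded by $\|c\|_{L^\infty}\|\la u\|_{L^2(\Omega)}\|y\|_{L^2(\Omega)}=o(1)$ via Lemma \ref{lem3-pol3} and \eqref{m7}; combined with $\int_\Omega|\la y|^2\,dx=o(1)$ from Lemma \ref{lem3-pol3} this yields $\int_\Omega|\nabla y|^2\,dx=o(1)$.

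Since all the substantive analysis has already been done upstream --- the decay of the auxiliary system \eqref{AUXXX} and the resolvent estimate \eqref{bdd1p}, the localized bounds of Lemma \ref{Lemma1h5h6} and Lemma \ref{lem2-pol3}, and the global estimates $\int_\Omega|\la u|^2\,dx=o(1)$, $\int_\Omega|\la y|^2\,dx=o(1)$ of Lemma \ref{lem3-pol3} --- this lemma is essentially bookkeeping, and I expect no genuine obstacle. The only point that needs a little care is the $b$-cross term: one must exploit that it is supported on $\omega_b$ before invoking Lemma \ref{Lemma1h5h6}, exactly as in the proof of Lemma \ref{lem4-pol}.
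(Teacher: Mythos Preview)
Your proposal is correct and follows exactly the approach of the paper, which simply refers back to the proof of Lemma~\ref{lem4-pol}: test \eqref{m8} and \eqref{m9} against $\overline u$ and $\overline y$, integrate by parts, and absorb the cross terms using Lemma~\ref{Lemma1h5h6}, Lemma~\ref{lem3-pol3} and \eqref{m7}. The details you supply (bounding the $b$-term via its support in $\omega_b$, handling the coupling term by Cauchy--Schwarz) are precisely those implicit in the paper's one-line ``similar to the proof of Lemma~\ref{lem4-pol}.''
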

\begin{proof}
The proof is similar to the proof of Lemma \ref{lem4-pol}.
\end{proof}

\noindent \textbf{Proof of Theorem \ref{Theorem3}.} 
Consequently, from the results of Lemma \ref{lem3-pol3}, \ref{lem4-pol3} , we obtain  that $\|U\|_{\HH}=o(1)$, which contradicts \eqref{m1}. Consequently, condition \eqref{C4} holds. This implies, from Theorem \ref{bt}, the energy decay estimation \eqref{Energyp3}. The proof is thus complete.

\section{Appendix}
\noindent In this section, we introduce the notions of stability that we encounter in this work.

%\noindent %%%%%%%%%%%%%%%%%%%%%%%%%%%%
%%%%%%%%%%%%%%%%%%%%%%%%%%%%
%For this aim, we will use the following result (see Corollary 3.7.18 page 157 in \cite{WArendt}).

\begin{definition}\label{Defsta}
{Assume that $A$ is the generator of a C$_0$-semigroup of contractions $\left(e^{tA}\right)_{t\geq0}$  on a Hilbert space  $\mathcal{H}$. The  $C_0$-semigroup $\left(e^{tA}\right)_{t\geq0}$  is said to be
 %%%%%%%%%%%%%%%%
\begin{enumerate}
\item[1.]  strongly stable if 
$$\lim_{t\to +\infty} \|e^{tA}x_0\|_{H}=0, \quad\forall \ x_0\in H;$$
\item[2.]  exponentially (or uniformly) stable if there exist two positive constants $M$ and $\epsilon$ such that
\begin{equation*}
\|e^{tA}x_0\|_{H} \leq Me^{-\epsilon t}\|x_0\|_{H}, \quad
\forall\  t>0,  \ \forall \ x_0\in {H};
\end{equation*}
\item[3.] polynomially stable if there exists two positive constants $C$ and $\alpha$ such that
\begin{equation*}
 \|e^{tA}x_0\|_{H}\leq C t^{-\alpha}\|x_0\|_{H},  \quad\forall\ 
t>0,  \ \forall \ x_0\in D\left(\mathcal{A}\right).
\end{equation*}
In that case, one says that the semigroup $\left(e^{tA}\right)_{t\geq 0}$ decays  at a rate $t^{-\alpha}$.
\noindent The  $C_0$-semigroup $\left(e^{tA}\right)_{t\geq0}$  is said to be  polynomially stable with optimal decay rate $t^{-\alpha}$ (with $\alpha>0$) if it is polynomially stable with decay rate $t^{-\alpha}$ and, for any $\varepsilon>0$ small enough, the semigroup $\left(e^{tA}\right)_{t\geq0}$  does  not decay at a rate $t^{-(\alpha-\varepsilon)}$.
\end{enumerate}}
%\xqed{$\square$}
\end{definition}
\noindent To show the strong stability of a $C_0-$semigroup of contraction  $(e^{tA})_{t\geq 0}$ we rely on the following result due to Arendt-Batty \cite{Arendt01}.

\begin{theoreme}\label{arendtbatty}
%\rm{{$\left(\textbf{Arendt and Batty in }\text{\cite{Arendt01}}\right)$}
Assume that $A$ is the generator of a C$_0-$semigroup of contractions $\left(e^{tA}\right)_{t\geq0}$  on a Hilbert space $\mathcal{H}$. If
 %%%%%%%%%%%%%%%%
 \begin{enumerate}
 \item[1.]  $A$ has no pure imaginary eigenvalues,
  \item[2.]  $\sigma\left(A\right)\cap i\mathbb{R}$ is countable,
 \end{enumerate}
where $\sigma\left(A\right)$ denotes the spectrum of $A$, then the $C_0-$semigroup $\left(e^{tA}\right)_{t\geq0}$  is strongly stable.%\xqed{$\square$}
\end{theoreme}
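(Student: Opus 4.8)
This is the classical Arendt--Batty strong-stability criterion (also attributed to Lyubich and Vu), so one legitimate option is simply to invoke \cite{Arendt01}. If one prefers to reproduce the argument, the plan is the following. First I would introduce the closed, $e^{tA}$-invariant subspace
\[
\mathcal{H}_0=\Big\{x\in\mathcal{H}\ :\ \lim_{t\to+\infty}\big\|e^{tA}x\big\|_{\mathcal{H}}=0\Big\}
\]
(it is closed because $\|e^{tA}\|\le 1$, and $e^{tA}$-invariant by the semigroup property), and argue by contradiction assuming $\mathcal{H}_0\neq\mathcal{H}$. Passing to the quotient Hilbert space $\mathcal{H}/\mathcal{H}_0$, one obtains an induced $C_0$-semigroup of contractions $\widetilde{T}(t)$ which, by construction, has no nonzero orbit converging to $0$ in norm. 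Since $\|\widetilde{T}(t)\|\le 1$, a compactness argument in the spirit of the Jacobs--de~Leeuw--Glicksberg splitting (equivalently, extracting a limit of the translates $\widetilde{T}(\,\cdot+t_n)$ along a suitable sequence $t_n\to+\infty$) then produces a nontrivial closed subspace on which these limits define a $C_0$-group of isometries $(S(t))_{t\in\R}$; write $B$ for its generator.

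The core of the proof is then spectral. Using that each $S(t)$ arises as a strong limit of translates of $\widetilde{T}(t)$, together with continuity of the resolvent on the resolvent set, one shows the inclusion
\[
\sigma(B)\ \subseteq\ i\R\cap\sigma(A),
\]
which by the second hypothesis is countable. A nonempty countable closed subset of the line $i\R$ has an isolated point $i\mu$; and since $(S(t))$ is a bounded group, an isolated point of $\sigma(B)$ is a first-order pole of $z\mapsto(zI-B)^{-1}$ whose residue is a nonzero spectral projection, so $i\mu$ is an eigenvalue of $B$. Pulling the corresponding eigenvector back through the limit construction (a weak-$\ast$ / duality argument) yields a nonzero eigenvector of $A$, or of $A^{\ast}$, for the eigenvalue $i\mu$; and since $A$ is m-dissipative on a Hilbert space, the purely imaginary eigenspaces of $A$ and of $A^{\ast}$ coincide, so in either case $i\mu$ is a purely imaginary eigenvalue of $A$, contradicting the first hypothesis. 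Hence $\mathcal{H}_0=\mathcal{H}$, which is exactly the asserted strong stability.

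I expect the genuinely delicate steps --- and hence the main obstacle --- to be the construction of the limit isometric group $(S(t))$ and the two spectral facts used with it: the inclusion $\sigma(B)\subseteq\sigma(A)\cap i\R$, and the implication ``an isolated point of the spectrum of the generator of a bounded group is an eigenvalue''. By contrast, the reduction to the quotient $\mathcal{H}/\mathcal{H}_0$, and the passage from an eigenvalue of $B$ back to one of $A$ (or $A^{\ast}$), are comparatively soft functional analysis. Since all of this is carried out in full generality in \cite{Arendt01}, in practice I would simply cite that reference rather than reproduce the argument here.
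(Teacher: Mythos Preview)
Your proposal is entirely appropriate: the paper does not prove this statement at all but merely records it in the Appendix as a known result, citing \cite{Arendt01}. Your suggestion to ``simply invoke \cite{Arendt01}'' is therefore exactly what the paper does, and the sketch you add of the Arendt--Batty/Lyubich--Vu argument (quotienting by the stable subspace, extracting a limiting isometric group, and the spectral/isolated-point argument) goes well beyond the paper's treatment; it is a correct outline of the original proof, though unnecessary here.
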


\noindent  Concerning the characterization of exponential stability of a $C_0-$semigroup of contraction  $(e^{tA})_{t\geq 0}$ we rely on the following result due to Huang \cite{Huang01} and Pr\"uss \cite{pruss01}. 
\begin{theoreme}\label{hp}
Let $A:\ D(A)\subset H\rightarrow H$ generate a $C_0-$semigroup of contractions $\left(e^{tA}\right)_{t\geq 0}$ on $H$. Assume that $i\la \in \rho(A)$, $\forall \la \in \R$. Then, the $C_0-$semigroup $\left(e^{tA}\right)_{t\geq 0}$ is exponentially stable if and only if 
$$
\varlimsup_{\la\in\R,\ \abs{\la}\to +\infty}\|(i\la I-A)^{-1}\|_{\mathcal{L}(H)}<+\infty.
$$
\end{theoreme}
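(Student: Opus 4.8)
The plan is to establish the two implications separately; the forward direction is elementary, whereas the converse, which genuinely uses the Hilbert-space structure of $H$, is the heart of the matter. For the necessity, assume $\|e^{tA}\|_{\mathcal L(H)}\le Me^{-\epsilon t}$ for some $M\ge 1$ and $\epsilon>0$, so that the growth bound of $A$ is strictly negative. Then for every $\lambda\in\R$ the Laplace integral $\int_0^{+\infty}e^{-i\lambda t}e^{tA}x_0\,dt$ converges absolutely and, by integrating the abstract Cauchy problem, represents $(i\lambda I-A)^{-1}x_0$; hence $\|(i\lambda I-A)^{-1}\|_{\mathcal L(H)}\le\int_0^{+\infty}Me^{-\epsilon t}\,dt=M/\epsilon$, uniformly in $\lambda$, which is even stronger than the stated bound.

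For the sufficiency, set $M_0:=\sup_{\lambda\in\R}\|(i\lambda I-A)^{-1}\|_{\mathcal L(H)}<+\infty$. The decisive observation is that, since $i\R\subset\rho(A)$, the operator-valued map $\lambda\mapsto(i\lambda I-A)^{-1}$ is continuous and belongs to $L^{\infty}(\R;\mathcal L(H))$, so --- because the Fourier transform is unitary on the Hilbert space $L^2(\R;H)$ --- the Fourier multiplier operator $\mathcal T$ carrying this symbol is bounded on $L^2(\R;H)$ with $\|\mathcal T\|\le M_0$, and on compactly supported inputs it acts by the causal convolution $(\mathcal T f)(t)=\int_{-\infty}^{t}e^{(t-s)A}f(s)\,ds$ (the Fourier symbol of $t\mapsto e^{tA}\mathds{1}_{\{t\ge 0\}}$ being exactly $(i\lambda I-A)^{-1}$). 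I would then feed in $f(s):=\mathds{1}_{[0,1]}(s)\,e^{sA}x_0$, for which $\|f\|_{L^2(\R;H)}^2\le\|x_0\|_H^2$ by contractivity; using $e^{(t-s)A}e^{sA}=e^{tA}$ one obtains $(\mathcal T f)(t)=\min(t,1)\,e^{tA}x_0$, in particular $(\mathcal T f)(t)=e^{tA}x_0$ for $t\ge 1$, whence $\int_1^{+\infty}\|e^{tA}x_0\|_H^2\,dt\le\|\mathcal T f\|_{L^2(\R;H)}^2\le M_0^2\|x_0\|_H^2$ and therefore
\[
\int_0^{+\infty}\|e^{tA}x_0\|_H^2\,dt\le(1+M_0^2)\,\|x_0\|_H^2\qquad\text{for every }x_0\in H .
\]

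From here the conclusion is routine: contractivity gives $t\,\|e^{tA}x_0\|_H^2\le\int_0^{t}\|e^{sA}x_0\|_H^2\,ds\le(1+M_0^2)\|x_0\|_H^2$, so $\|e^{tA}\|_{\mathcal L(H)}\le\sqrt{(1+M_0^2)/t}\to 0$; picking $t_0$ with $\|e^{t_0A}\|_{\mathcal L(H)}<1$ and iterating the semigroup property produces the exponential decay. (Equivalently, the $L^2$-integrability of every orbit is precisely the hypothesis of Datko's theorem.)

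I expect the main obstacle to be the boundedness of $\mathcal T$ on $L^2(\R;H)$: the passage from the pointwise resolvent estimate $\sup_{\lambda\in\R}\|(i\lambda I-A)^{-1}\|<\infty$ to an $L^2$-in-time control of the orbits rests entirely on Plancherel's identity in a Hilbert space, and without the inner-product structure the theorem is genuinely false (there are classical Banach-space counterexamples). A secondary, purely technical point is to justify that $\lambda\mapsto(i\lambda I-A)^{-1}$ is the Fourier symbol of the causal convolution above, since $\int_0^{+\infty}e^{-i\lambda t}e^{tA}\,dt$ converges only in the improper (Abel) sense; this is standard for a bounded semigroup once $i\R\subset\rho(A)$ is known.
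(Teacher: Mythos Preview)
The paper does not actually contain a proof of this theorem: it is stated in the Appendix as a classical result attributed to Huang and Pr\"uss and used as a tool, with no argument given. There is therefore nothing in the paper to compare your proposal against.

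Your argument is a correct and essentially complete proof of the Gearhart--Huang--Pr\"uss theorem. The necessity is immediate from the Laplace representation, and for the sufficiency you follow the standard Hilbert-space route: the uniform resolvent bound on $i\R$ makes $\lambda\mapsto(i\lambda I-A)^{-1}$ a bounded Fourier multiplier on $L^2(\R;H)$ by Plancherel; feeding the test function $f(s)=\mathds{1}_{[0,1]}(s)\,e^{sA}x_0$ into the associated causal convolution yields square-integrability of every orbit, and Datko's theorem (or the direct monotonicity argument you wrote) then gives exponential decay. The two technical caveats you flag --- that Plancherel requires the Hilbert structure (the theorem fails in general Banach spaces) and that the identification of the multiplier with the causal convolution needs an Abel regularization $\epsilon\downarrow 0$ --- are real but routine; for the latter, the resolvent identity gives the uniform bound $\|(i\lambda+\epsilon-A)^{-1}\|\le M_0/(1-\epsilon M_0)$ for $\epsilon<1/M_0$, which is enough to pass to the limit.
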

\noindent  Also, concerning the characterization of polynomial stability of a $C_0-$semigroup of contraction  $(e^{tA})_{t\geq 0}$ we rely on the following result due to Borichev and Tomilov \cite{Borichev01} (see also \cite{RaoLiu01} and \cite{Batty01}). 

\begin{theoreme}\label{bt}
Assume that $A$ is the generator of a strongly continuous semigroup of contractions $\left(e^{tA}\right)_{t\geq0}$  on $H$.   If   $ i\mathbb{R}\subset \rho(A)$, then for a fixed $\ell>0$ the following conditions are equivalent
%%%%%%%%%%%%%%%%
\begin{equation}\label{h1}
\sup_{\lambda\in\mathbb{R}}\left\|\left(i\lambda I-A\right)^{-1}\right\|_{\mathcal{L}\left(H\right)}=O\left(|\lambda|^\ell\right),
\end{equation}
\begin{equation}\label{h2}
\|e^{tA}U_{0}\|^2_{H} \leq \frac{C}{t^{\frac{2}{\ell}}}\|U_0\|^2_{D(A)},\hspace{0.1cm}\forall t>0,\hspace{0.1cm} U_0\in D(A),\hspace{0.1cm} \text{for some}\hspace{0.1cm} C>0.
\end{equation}
\end{theoreme}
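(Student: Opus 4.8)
\noindent This theorem is the Borichev--Tomilov characterization of polynomial decay, so the plan is to reproduce their argument. Since $0\in i\R\subset\rho(A)$, the operator $A^{-1}$ is bounded and the graph norm of $D(A)$ is equivalent to $\|A\,\cdot\,\|$; hence the estimate to be proved is equivalent to the weighted decay $\|e^{tA}A^{-1}\|_{\mathcal{L}(H)}=O\bigl(t^{-1/\ell}\bigr)$ as $t\to+\infty$, and it suffices to establish the equivalence of
\[
(R):\quad \sup_{\la\in\R}\bigl\|(i\la I-A)^{-1}\bigr\|_{\mathcal{L}(H)}=O\bigl(|\la|^{\ell}\bigr) \qquad\text{and}\qquad (D):\quad \bigl\|e^{tA}A^{-1}\bigr\|_{\mathcal{L}(H)}=O\bigl(t^{-1/\ell}\bigr).
\]

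\noindent \emph{The soft implication $(D)\Rightarrow(R)$.} Using a Laplace (or second-order Laplace) representation of $(\la I-A)^{-1}A^{-1}$ for $\Re\la>0$, bounding the integrand by $e^{-(\Re\la)t}\min\bigl(M\|A^{-1}\|,\,C t^{-1/\ell}\bigr)$ (times a power of $t$ in the second-order case) and splitting the time integral, one obtains a polynomial resolvent bound of order $|\Im\la|^{\ell}$ along the vertical line $\Re\la\asymp|\Im\la|^{-\ell}$; the elementary identity $(\la I-A)^{-1}=\la(\la I-A)^{-1}A^{-1}-A^{-1}$ is what converts the bound on $(\la I-A)^{-1}A^{-1}$ into one on $(\la I-A)^{-1}$ itself. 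A Neumann series then transports this estimate onto the imaginary axis, giving $(R)$. This part is purely formal and is valid in any Banach space.

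\noindent \emph{The hard implication $(R)\Rightarrow(D)$.} Here the Hilbert structure of $H$ is indispensable. A Neumann series started on $i\R$ first propagates $(R)$ to a parabolic neighbourhood of the imaginary axis: $\|(\la I-A)^{-1}\|=O\bigl(|\Im\la|^{\ell}\bigr)$ whenever $0<\Re\la\lesssim|\Im\la|^{-\ell}$. One then represents $e^{tA}A^{-1}x$ by a Bromwich integral along a contour that coincides with $i\R$ for $|\Im\la|>R$ and is pushed to $\Re\la=\eps$ on the segment $|\Im\la|\le R$ (the extra factor $A^{-1}$ renders the integral convergent): on the shifted portion one estimates directly with $\|(\la I-A)^{-1}\|=O(R^{\ell})$, while the portion along $i\R$ is controlled by a vector-valued Plancherel estimate in $L^{2}(\R;H)$ — available precisely because $H$ is Hilbert — converting $\int_0^{\infty}e^{-2\eps t}\|e^{tA}A^{-1}x\|^{2}\,dt$ into $\frac1{2\pi}\int_{\R}\bigl\|\bigl((\eps+is)I-A\bigr)^{-1}A^{-1}x\bigr\|^{2}\,ds$ and exploiting the extra decay $|s|^{-1}$ carried by $A^{-1}$. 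Balancing the free parameters $R$ and $\eps$ against $t$, and upgrading the resulting time-averaged bound to a pointwise one through $\frac{d}{dt}\|e^{tA}A^{-1}x\|^{2}=2\Re\langle e^{tA}A^{-1}x,\,e^{tA}x\rangle$, yields $\|e^{tA}A^{-1}x\|=O\bigl(t^{-1/\ell}\bigr)\|x\|$ with no logarithmic loss; reformulating $(D)$ as the stated estimate \eqref{h2} is then immediate from the norm equivalence on $D(A)$.

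\noindent \emph{Main obstacle.} All the difficulty sits in $(R)\Rightarrow(D)$: reaching the \emph{optimal} exponent $1/\ell$, rather than $1/\ell$ up to a logarithm (which is all a general Banach-space argument delivers), forces the use of the vector-valued Plancherel identity, a careful simultaneous optimisation of the cut-off $R$ and the abscissa $\eps$, and the device of inserting the smoothing factor $A^{-1}$ to handle the merely conditional convergence of the Bromwich integral. The soft direction, and the passage between the graph norm and $\|A\,\cdot\,\|$ on $D(A)$, are routine.
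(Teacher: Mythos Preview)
The paper does not prove this theorem at all: it is quoted in the Appendix as a known result due to Borichev and Tomilov \cite{Borichev01} (with related references \cite{RaoLiu01,Batty01}) and is used as a black box throughout Section~\ref{Section-Poly}. So there is no ``paper's own proof'' to compare against.

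Your sketch is a faithful outline of the Borichev--Tomilov argument and captures the essential points: the easy direction via Laplace representation and Neumann series, and the hard direction via extension of the resolvent bound to a parabolic region, a contour/Bromwich representation, and the crucial use of the Hilbert-space Plancherel identity to remove the logarithmic loss that a Banach-space argument (as in \cite{Batty01}) would leave. The remark that the smoothing factor $A^{-1}$ is what makes the contour integral tractable, and the observation that $0\in\rho(A)$ gives the equivalence of the graph norm with $\|A\cdot\|$, are both correct and to the point. In short, you have supplied substantially more than the paper does here; for the purposes of this paper a one-line citation would suffice, but nothing in your outline is wrong.
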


%\protect\bibliographystyle{abbrv}
%\protect\bibliographystyle{alpha}
%\bibliography{References}

\end{document}